\documentclass{scrartcl}
\usepackage[utf8]{inputenc}
\usepackage[T1]{fontenc}
\usepackage{lmodern}
\usepackage{microtype}
\usepackage[english]{babel}
\usepackage{amsmath, amssymb, mathtools}
\usepackage[dvipsnames]{xcolor}
\usepackage{xparse}
\usepackage{caption}
\usepackage{subcaption}
\usepackage{enumitem}

\usepackage{tikz}
\usepackage[colorinlistoftodos]{todonotes}
\usetikzlibrary{cd,calc,quotes}
\usepackage{forest}

\usetikzlibrary{decorations.pathmorphing}

\usepackage[autostyle]{csquotes}
\usepackage[style=alphabetic, sorting=nyt, maxnames=10, maxalphanames=5]{biblatex}
\addbibresource{references.bib}

\DeclareDelimFormat[textcite]{multinamedelim}{\textendash}
\DeclareDelimAlias[textcite]{finalnamedelim}{multinamedelim}


\usepackage[numbered]{bookmark}
\usepackage{hyperref}
\definecolor{link}{HTML}{C10000}
\definecolor{cite}{HTML}{006F00}
\definecolor{url}{HTML}{3C3CFF}
\hypersetup{pdfencoding=auto, unicode, colorlinks, citecolor=cite, urlcolor=url, linkcolor=link}
\usepackage{amsthm, thmtools}
\usepackage{mathrsfs}

\newcommand{\ourDR}[5]{
  \begin{tikzcd}[ampersand replacement=\&]
    #1 \arrow[r,shift left = .5 ex,"#3"] \& #2 \arrow[l, shift left = .5 ex,"#4"] \arrow[loop right, distance = 4em,start anchor = {[yshift = 1ex]east},end anchor = {[yshift=-1ex]east}]{}{#5}
  \end{tikzcd}
}

\forestset{
operad/.style={
font={\footnotesize},
edge={thick},
for tree={grow'=90, l sep=.9cm, s sep = .9cm},
for level>=1{circle, fill=black, inner sep=.125em},
for leaves={fill=none, inner sep=.125em, font={\small}}
},
smalloperad/.style={
    operad,
    for tree={l = .5cm, l sep = .5cm, s sep = .33cm}
  },
lb/.style={
    edge label={
        node [midway, fill=white, inner sep = 1pt, font=\scriptsize, align=center] {#1}
      }
  },
mod/.style={edge={red, thick, densely dashed}},
mod2/.style={edge={blue, thick, double, densely dashed}},
}

\makeatletter
\newcommand{\subalign}[1]{%
  \vcenter{%
    \Let@ \restore@math@cr \default@tag
    \baselineskip\fontdimen10 \scriptfont\tw@
    \advance\baselineskip\fontdimen12 \scriptfont\tw@
    \lineskip\thr@@\fontdimen8 \scriptfont\thr@@
    \lineskiplimit\lineskip
    \ialign{\hfil$\m@th\scriptstyle##$&$\m@th\scriptstyle{}##$\hfil\crcr
      #1\crcr
    }%
  }%
}
\makeatother

\NewDocumentCommand{\eugene}{sO{}m}{\todo[\IfBooleanT{#1}{inline}, #2, color=Plum, textcolor=white]{E: #3}}


\newcommand{\antishriek}{\text{\raisebox{\depth}{\textexclamdown}}}

\makeatletter
\def\font@loop#1{
  \ifx\relax#1
  \else
    \expandafter\def\csname c#1\endcsname{\mathcal{#1}}
    \expandafter\def\csname s#1\endcsname{\mathsf{#1}}
    \expandafter\def\csname b#1\endcsname{\mathbf{#1}}
    \expandafter\def\csname f#1\endcsname{\mathfrak{#1}}
    \expandafter\def\csname #1#1\endcsname{\mathbb{#1}}
    \expandafter\font@loop%
  \fi}
\font@loop ABCDEFGHIJKLMNOPQRSTUVWXYZ\relax
\makeatother
\def\fg{\mathfrak g}

\DeclareMathOperator{\sgn}{sgn}
\DeclareMathOperator{\Sym}{Sym}
\DeclareMathOperator{\Tw}{Tw}
\DeclareMathOperator{\Hom}{Hom}
\DeclareMathOperator{\Alg}{Alg}
\DeclareMathOperator{\Op}{Op}
\DeclareMathOperator{\CoOp}{CoOp}
\DeclareMathOperator{\Sh}{Sh}
\DeclareMathOperator{\Com}{Com}
\DeclareMathOperator{\Lie}{Lie}
\DeclareMathOperator{\As}{As}
\DeclareMathOperator{\Disj}{Disj}
\DeclareMathOperator{\hoDisj}{hoDisj}
\DeclareMathOperator{\PCosh}{Opens}

\DeclareMathOperator{\Tens}{Tens}

\DeclareMathOperator{\Ch}{Ch}




\newcommand{\cocirc}{\mathbin{\underline{\circ}}}
\DeclareMathOperator{\id}{id}
\NewDocumentCommand{\bin}{O{U}O{V}}{\ensuremath{\mu_{#1,#2}}}
\NewDocumentCommand{\ext}{O{U}O{V}}{\ensuremath{\iota_{#1}^{#2}}}
\NewDocumentCommand{\binshriek}{O{U}O{V}}{\ensuremath{\mu^!_{#1,#2}}}
\NewDocumentCommand{\extshriek}{O{U}O{V}}{\ensuremath{\iota_{#1}^{!,#2}}}
\NewDocumentCommand{\binantishriek}{O{U}O{V}}{\ensuremath{\mu^\antishriek_{#1,#2}}}
\NewDocumentCommand{\extantishriek}{O{U}O{V}}{\ensuremath{\iota_{#1}^{\antishriek,#2}}}
\NewDocumentCommand{\ops}{O{\mathcal{U}}}{\ensuremath{\mu_{#1}}}

\NewDocumentCommand{\taui}{mO{U,V,W}}{\mathop{\tau_{\mathrm{#1}}}(#2)}

\declaretheoremstyle[
  spaceabove=7pt, spacebelow=7pt,
  headfont=\normalfont\bfseries,
  notefont=\mdseries, notebraces={()},
  bodyfont=\normalfont,
  postheadspace=5pt,
  headpunct = .
]{thm}

\declaretheoremstyle[
  spaceabove=7pt, spacebelow=7pt,
  headfont=\normalfont\bfseries,
  notefont=\mdseries, notebraces={()},
  bodyfont=\normalfont,
  postheadspace=10pt,
  headpunct = .
]{def}

\declaretheoremstyle[
  spaceabove=4pt, spacebelow=7pt,
  headfont=\itshape,
  postheadspace=5pt,
  headpunct = :,
  postheadspace = 3pt, qed = $\lozenge$
]{rem}

\declaretheorem[numbered = yes, parent = section, style = thm]{theorem}

\declaretheorem[style = thm, name = Theorem]{thmABC}

\declaretheorem[sibling = theorem, style = thm, name = Proposition/Definition]{propdef}
\declaretheorem[sibling = theorem, style = thm]{proposition}
\declaretheorem[sibling = theorem, style = thm]{lemma}

\numberwithin{equation}{section}

\declaretheorem[sibling = theorem, style = def]{definition}

\declaretheorem[sibling = theorem, style = rem]{remark}
\declaretheorem[numbered = no, style = rem, name = Remark]{uremark}

\declaretheorem[sibling = theorem, style = rem]{example}

\title{Homotopy Prefactorization Algebras}
\author{
  Najib Idrissi%
  \thanks{Université Paris Cité and Sorbonne Université, CNRS, IMJ-PRG, F-75013 Paris, France. \href{mailto:najib.idrissi-kaitouni@u-paris.fr}{najib.idrissi-kaitouni@u-paris.fr}}
  \and
  Eugene Rabinovich%
  \thanks{Succint, Inc. \href{mailto:erabinovich92@gmail.com}{erabinovich92@gmail.com}}
  }
  \date{June 2024}

\begin{document}

\maketitle

\begin{abstract}
  We apply the theory of operadic Koszul duality to provide a cofibrant resolution of the colored operad whose algebras are prefactorization algebras on a fixed space $M$.
  This allows us to describe a notion of prefactorization algebra up to homotopy as well as morphisms up to homotopy between such objects.
  We make explicit these notions for several special $M$, such as certain finite topological spaces, or the real line.

  ~

  \noindent
  MSC: 18M70; 81T70; 57N65. Keywords: inhomogeneous Koszul duality; colored operads; factorization algebras.
\end{abstract}

\tableofcontents

\section{Introduction}

Prefactorization algebras and factorization algebras---their cousins satisfying descent---are objects meant to model the structure present in the observables of quantum field theory (see~\autocite{CG1, CG2}).
A prefactorization algebra on a topological space $M$ is a precosheaf $\cF$ on $M$ equipped with extra structure maps that can ``glue'' elements of $\cF(U_1) ,\ldots , \cF(U_n)$ (for pairwise disjoint open subsets $U_i \subset V$) into a single element of $\cF (V)$ (see Definition~\ref{def pfa}).
(Pre-)factorization algebras with additional properties give rise to familiar algebraic objects, such as vertex algebras or $E_n$-algebras.
Some tools to study the homotopy theory of prefactorization algebras have been given by \textcite{CarmonaFloresMuro2021}, including a model structure on the category of prefactorization algebras on $M$ such that the bifibrant objects in this category are (a subclass of the) locally constant factorization algebras on~$M$.

In the current paper, we provide a complementary perspective, in which we focus on computational tools for the study of the $\infty$-category of prefactorization algebras.
Namely, we describe a notion of ``prefactorization algebra up to homotopy'', a notion of ``$\infty$-morphism of homotopy prefactorization algebras'', and a homotopy transfer theorem for such algebras.
Our main technique is the Koszul theory for operads, of which we provide an overview in~\ref{sec opsforfas} for those readers unfamiliar with this toolkit.

We can state the main results of this paper as follows:

\begin{thmABC}[See Definition~\ref{def disj}, Theorem~\ref{thm: disjkoszul}]\label{thmA}
  Given a manifold $M$, there is a \emph{Koszul} quadratic-linear colored operad $\Disj_M$ whose category of algebras is the category of prefactorization algebras on $M$.
\end{thmABC}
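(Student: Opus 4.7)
The plan is to construct $\Disj_M$ explicitly by generators and quadratic-linear relations, check that its algebras coincide with prefactorization algebras on $M$, and then invoke the inhomogeneous Koszul duality machinery of Loday--Vallette. The colors of $\Disj_M$ are the open subsets of $M$; the generators are unary extensions $\iota_U^V$ for each inclusion $U \subseteq V$ and binary multiplications $\mu_{U_1, U_2}^V$ for each pair of disjoint opens $U_1, U_2 \subseteq V$. The quadratic-linear relations encode (a) composability of extensions, $\iota_V^W \circ \iota_U^V = \iota_U^W$; (b) absorption of multiplications by extensions in the output, $\iota_V^W \circ \mu_{U_1,U_2}^V = \mu_{U_1,U_2}^W$, and a parallel identity for extensions in each input slot; and (c) the purely quadratic ternary coherences expressing that any two parenthesizations of a $3$-ary operation $(U_1,U_2,U_3) \to V$ through intermediate opens agree. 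Unpacking the definition of an algebra over this presentation recovers exactly the data and axioms of a prefactorization algebra.

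To prove Koszulness I would follow the inhomogeneous route. First, pass to the quadratic part $q\Disj_M$ by zeroing the linear terms in (a) and (b); the surviving relations become $\iota\iota = 0$, $\iota\mu = 0$, $\mu \circ (\iota \otimes \id) = 0$ (and its symmetric version), together with the coherence relations among pairs of $\mu$'s from (c). The main technical step is Koszulness of $q\Disj_M$ itself. Since all mixed extension-multiplication composites vanish in $q\Disj_M$, every nonzero operation factors uniquely as either a purely-extension chain (in arity $1$) or a purely-multiplicative tree (in arity $\geq 2$); this ought to provide a clean PBW basis, and computing the Koszul complex degree by degree should yield acyclicity. Alternatively, the multiplicative part can be identified with a colored analogue of $\Com$ indexed by tuples of pairwise disjoint opens, whose Koszulness parallels that of $\Com$ itself.

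Finally, I would verify the two inhomogeneous Koszulness conditions of Loday--Vallette: (ql1), that the linear-quadratic relations do not force any generator to vanish, is immediate by inspection since (a) and (b) are in normal form, identifying a quadratic composite with a distinct generator; and (ql2), closure of the relations under one-step operadic composition modulo themselves, follows by iterating composability and absorption---for instance, applying an extension to both sides of (b) reproduces (b) one level higher, and the ternary coherences similarly propagate to $4$-ary ones. The main obstacle I foresee is the combinatorial bookkeeping of the Koszul complex for $q\Disj_M$: because the colors form the infinite poset of opens of $M$, one must index carefully by chains of opens, but no genuinely new ideas beyond the standard operadic Koszul toolkit should be required.
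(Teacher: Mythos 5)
Your overall strategy---present $\Disj_M$ by generators and quadratic-linear relations, verify \ref{ql1}/\ref{ql2}, and identify the purely multiplicative part with a colored analogue of $\Com$---is the same as the paper's. But your choice of binary generators creates a genuine gap. You take $\mu_{U_1,U_2}^V$ for \emph{every} $V$ containing the disjoint opens $U_1,U_2$, so your mixed relations $\iota_V^W\circ\mu_{U_1,U_2}^V=\mu_{U_1,U_2}^W$ and $\mu_{U_1,U_2}^V\circ_1\iota_{U_1'}^{U_1}=\mu_{U_1',U_2}^V$ are quadratic-\emph{linear}, and in the quadratic part $q\Disj_M$ they degenerate to ``all mixed composites vanish,'' exactly as you say. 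The paper instead takes only $\mu_{U,V}^{U\sqcup V}$ (output equal to the disjoint union) as binary generators; then the mixed relations $R_3$ are \emph{homogeneous quadratic} rewriting rules of the form $\mu\circ\iota=\iota\circ\mu$, which survive unchanged in $qR$. This is what makes $q\Disj\cong q\PCosh\circ\Tens$ a composition product governed by a distributive law, so that Koszulness follows from the Diamond Lemma together with Koszulness of the two factors (the colored dual-numbers operad and the colored commutative operad). With your presentation the two quadratic operads are genuinely different---yours kills all mixed composites, the paper's does not---and the distributive-law/Diamond Lemma route is unavailable to you. The sentence ``computing the Koszul complex degree by degree should yield acyclicity'' is therefore not bookkeeping: it is the entire content of the theorem for your presentation, and you have given no argument for it (e.g.\ no PBW basis for the associated shuffle operad). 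It is not even clear that your quadratic operad is Koszul, since the generators $\mu_{U_1,U_2}^V$ with $V\supsetneq U_1\sqcup U_2$ are redundant in $\Disj$ itself ($\mu_{U_1,U_2}^V=\iota_{U_1\sqcup U_2}^V\circ\mu_{U_1,U_2}^{U_1\sqcup U_2}$), which inflates the weight-one part of the Koszul dual with cogenerators that all present the same operation.

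Two smaller points. First, the unary generators must be indexed by \emph{strict} inclusions $U\subsetneq V$ (otherwise $\iota_U^U$ duplicates the operadic identity); you allow $U\subseteq V$. Second, your treatment of \ref{ql2} as ``immediate by iterating composability and absorption'' understates the work: one must check that every purely quadratic element arising from cancellations among cubic terms in $E\circ_{(1)}R+R\circ_{(1)}E$ already lies in $R$, and with your presentation there are cross-terms between the output-absorption, input-absorption, chain, and associativity relations. Even with the minimal generators, the paper's verification of \ref{ql2} requires solving a linear system over nine families of cubic compositions; your larger relation space only makes this harder, and a failure of \ref{ql2} would invalidate the application of the inhomogeneous Koszul machinery altogether.
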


Theorem~\ref{thmA} implies that there is a relatively simple cofibrant resolution $\hoDisj_M$ of $\Disj_M$, and we make this cofibrant resolution explicit.
Furthermore, we make explicit the category of algebras over this operad.

\begin{thmABC}[See Proposition~\ref{prop: hodisjexplicit}]\label{thmB}
  Given a manifold $M$, algebras over the Koszul resolution $\hoDisj_M = \Omega(\Disj_M^\antishriek)$ are called homotopy prefactorization algebras on $M$.
  Such an algebra is given by a collection $\cA = \{ \cA(U) \}_{U \subseteq M}$ indexed by open subsets of $M$, equipped with maps:
  \begin{equation*}
    \mu_{\cU}: \cA(U_{11})\otimes \cdots \otimes \cA(U_{k1})\to \cA(U_{1s_1}\sqcup \cdots \sqcup U_{ks_k})
  \end{equation*}
  for every collection $\cU = \bigl( U_{11} \subset \dots \subset U_{1s_1}, \dots, U_{k1} \subset \dots \subset U_{k s_k}\bigr)$ (with pairwise disjoint $U_{j s_j}$), satisfying several compatibility conditions (most notably Equation~\eqref{eq diff mu}).
\end{thmABC}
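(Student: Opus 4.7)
The plan is to apply inhomogeneous operadic Koszul duality to $\Disj_M$. By Theorem~\ref{thmA}, $\Disj_M$ is a Koszul quadratic-linear colored operad, so its Koszul resolution is the cobar construction $\hoDisj_M = \Omega(\Disj_M^\antishriek)$ on the (curved) Koszul dual cooperad. By the universal property of $\Omega$, an algebra structure on a collection $\cA = \{\cA(U)\}_{U \subseteq M}$ is equivalent to a morphism of cooperads (up to the appropriate curvature correction) from $\Disj_M^\antishriek$ into the endomorphism cooperad of $\cA$, i.e.\ to the data of a family of maps indexed by the cogenerators of $\Disj_M^\antishriek$ satisfying a Maurer--Cartan-type equation coming from the codifferential.

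The first step is therefore to describe $\Disj_M^\antishriek$ arity by arity. Since $\Disj_M$ is generated by the binary gluings $\bin$ (for pairwise disjoint $U, V$) and the unary extensions $\ext$ (for $U \subseteq V$), the standard bar/cobar formalism identifies cogenerators of $\Disj_M^\antishriek$ in each multi-arity with trees built out of the duals of these generators. Using the quadratic relations between $\bin$ and $\ext$ (associativity of gluing, functoriality of extension, compatibility of extension with gluing), a basis reduction shows that these tree-shaped cogenerators collapse precisely to multi-chain data $\cU = (U_{11} \subset \cdots \subset U_{1s_1}, \ldots, U_{k1} \subset \cdots \subset U_{ks_k})$ with pairwise disjoint tops $U_{j s_j}$. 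This gives the domain and codomain of each $\mu_{\cU}$.

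Next, I compute the codifferential on $\Disj_M^\antishriek$ and translate it into a condition on the family $(\mu_{\cU})$. The infinitesimal coproduct on $\Disj_M^\antishriek$ splits a multi-chain $\cU$ into pairs $(\cU', \cU'')$ in every admissible way: either by cutting one of the chains $U_{j1} \subset \cdots \subset U_{j s_j}$ at an intermediate level (producing an inner-composed pair of multi-chains), or by partitioning the set of branches (producing a horizontally composed pair). Unfolding the equation $d \circ \mu_{\cU} = \sum \pm\, \mu_{\cU'} \circ_j \mu_{\cU''}$ through this decomposition reproduces Equation~\eqref{eq diff mu} on the nose; the quadratic-linear part of the relations in $\Disj_M$ contributes the linear terms in the differential, following the general inhomogeneous Koszul recipe.

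The main obstacle is combinatorial bookkeeping. The conceptual work has already been done in Theorem~\ref{thmA}, so what remains is to make a careful choice of basis for $\Disj_M^\antishriek$, verify that tree reduction under the relations yields exactly the multi-chain indexing, and track signs coming from the operadic suspension implicit in $\Omega$. Once these are pinned down, the statement of the proposition is a direct readout of the defining data and axioms of an $\Omega(\Disj_M^\antishriek)$-algebra.
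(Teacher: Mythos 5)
Your overall strategy is the same as the paper's: read off the operations of an $\Omega(\Disj^\antishriek)$-algebra from the cooperations of $\Disj^\antishriek$, and the relation~\eqref{eq diff mu} from the two parts of the cobar differential (the internal differential of $\Disj^\antishriek$ induced by the linear part of $R_1$, and the infinitesimal decomposition map). There is, however, a concrete gap in your second step. You claim that ``a basis reduction shows that these tree-shaped cogenerators collapse precisely to multi-chain data,'' i.e.\ that the $\mu^\antishriek_{\cU}$ form a basis of $\overline{\Disj}^\antishriek$. They do not: they are only a spanning set, with relations among them. The paper obtains $\Disj^\antishriek$ via a codistributive law as $\Tens^\antishriek \circ \PCosh^\antishriek$, where $\Tens^\antishriek$ is a colored copy of $\Com^\antishriek = \Lie^c\{1\}$ (Lemma~\ref{lem: tenskoszul}); the cooperations $\mu^\antishriek_{U_{1s_1},\dots,U_{ks_k}}$ are images of the $\As^\antishriek$ generators and satisfy the shuffle relations of $\Lie^c\{1\}$, and the chains contribute further signed symmetries. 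These relations are exactly what become Condition~\ref{it mu shuf} of Proposition~\ref{prop: hodisjexplicit} (vanishing of the $\mu_\cU$ on graded sums of shuffles). If the multi-chains were a basis, that condition would be vacuous and the resulting notion would not be that of a $\hoDisj$-algebra; so this point cannot be absorbed into ``combinatorial bookkeeping.''

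A second, smaller inaccuracy: the infinitesimal decomposition of $\mu^\antishriek_\cU$ is not an ``either/or'' of cutting a single chain versus partitioning the branches. As Lemma~\ref{lem: infdecomp} shows, a generic term simultaneously selects a consecutive block of $q$ branches, cuts each chain in that block at some level, sends the lower segments to $\cU''$, and shuffles the upper segments together into a single new chain of $\cU'$; the sum over $\sigma \in \Sh(s'_1-1,\dots,s'_q-1)$ is essential and is the source of most of the signs. An ``either/or'' description omits all of these mixed terms from Equation~\eqref{eq diff mu}. Finally, a minor point: since the presentation of $\Disj$ is quadratic-linear with no constant term, $\Disj^\antishriek$ is an honest dg-cooperad and no curvature correction is needed.
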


The exact relations satisfied by the $\mu_\cU$ are not tremendously important; the important thing is that Proposition~\ref{prop: hodisjexplicit} makes these relations completely explicit.
Finally, we discuss the homotopy transfer theorem for $\hoDisj_M$ algebras.
Readers may be familiar with the idea that, given a dg associative algebra $A$, its cohomology $H^\bullet(A)$ is also a dg associative algebra; however, there is not necessarily a map of dg associative algebras $H^\bullet(A)\to A$.
Instead, one must consider the data $H^\bullet(A)$ together with its \emph{Massey products}, which are at-least-trilinear operations on $H^\bullet(A)$.
These Massey products endow $H^\bullet(A)$ with the structure of an $A_\infty$-algebra, and this $A_\infty$-algebra is equivalent to $A$ in a suitable sense.
An analogous story is true for algebras over any Koszul operad, and in fact, as a consequence of Theorem~\ref{thmA}, we find:

\begin{thmABC}[See Proposition~\ref{prop HTT}]\label{thmC}
  Given a prefactorization algebra $\cA$ on $M$ and deformation retractions
  \[
    \ourDR{\cB(U)}{\cA(U)}{i_U}{p_U}{h_U}
  \]
  for all open subsets $U\subset M$, then the collection $\{\cB(U)\}_{U\in \PCosh(M)}$ has a $\hoDisj$-algebra structure and the maps $i_U$ can be extended into an $\infty$-morphism of $\hoDisj$-algebras.
\end{thmABC}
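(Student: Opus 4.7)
The plan is to deduce this as a direct corollary of the general homotopy transfer theorem (HTT) for algebras over a Koszul colored operad, applied to the Koszul operad $\Disj_M$ furnished by Theorem~\ref{thmA}. Once we have $\Disj_M$ Koszul with resolution $\hoDisj_M = \Omega(\Disj_M^\antishriek)$, the transfer theorem is a formal consequence of the operadic Koszul machinery, and our task reduces to (i) checking that the relevant colored version of the HTT is available and (ii) interpreting the output in terms of the explicit structure maps $\mu_\cU$ described in Theorem~\ref{thmB}.

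First I would recall the single-colored HTT: given a Koszul operad $\cP$, a $\cP$-algebra $A$, a chain complex $B$, and a deformation retraction $(i, p, h)$ from $A$ onto $B$, the $\cP_\infty$-structure on $B$ is given by summing over rooted trees with vertices decorated by (a cooperadic basis of) $\cP^\antishriek$, leaves decorated by $i$, internal edges by $h$, and the root by $p$. The same tree sum, with the root decorated by $\id$ instead, yields an $\infty$-morphism $B \to A$ extending $i$. This is standard (see Loday--Vallette); one need only note that the formulas are defined at the level of (co)operads, so they work equally well in the colored setting, where a deformation retraction is simply a color-wise family of deformation retractions.

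Second I would specialize to $\cP = \Disj_M$, whose color set is $\PCosh(M)$. Applying the colored HTT to the datum $\{(i_U, p_U, h_U)\}_{U \subseteq M}$ yields a $\hoDisj_M$-algebra structure on $\{\cB(U)\}$. Concretely, for each tuple $\cU = \bigl( U_{11} \subset \dots \subset U_{1s_1}, \dots, U_{k1} \subset \dots \subset U_{ks_k} \bigr)$, the map $\mu_\cU^\cB$ is given by a finite sum of trees, each decorated by generators of $\Disj_M^\antishriek$ (whose indexing by nested tuples is precisely what parametrizes the $\mu_\cU$ in Theorem~\ref{thmB}), with each tree's leaves, internal edges, and root decorated by $i_{U_{j1}}$, homotopies $h_V$, and $p_{V}$ respectively, for appropriate intermediate opens $V$. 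The compatibility relation~\eqref{eq diff mu} is then automatic because the whole construction arises from the bar--cobar adjunction and a cooperad map $\Disj_M^\antishriek \to \End_\cB$.

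The main point requiring justification is the colored generalization of the HTT. This is essentially formal once one has a well-behaved bar--cobar adjunction for colored operads and a Koszul morphism $\Disj_M^\antishriek \to \Disj_M$, both of which are provided by the framework summarized in Section~\ref{sec opsforfas} and the results of Theorem~\ref{thmA}. One minor subtlety is that $\PCosh(M)$ is an infinite set of colors, but the transfer formulas are local in the color set (they only involve the finitely many colors appearing on the tree), so no finiteness issue arises. Given all of this, the statement of Proposition~\ref{prop HTT} is simply the output of the transfer machinery applied to our data.
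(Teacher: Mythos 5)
Your proposal is correct and follows exactly the route the paper takes: Proposition~\ref{prop HTT} is deduced as a direct application of the general homotopy transfer theorem for algebras over a Koszul (colored) operad (Loday--Vallette, Theorem 10.3.1), using the Koszulity of $\Disj_M$ from Theorem~\ref{thmA}. The extra detail you supply about the decorated-tree formulas and the harmlessness of the infinite color set is a reasonable elaboration of the same argument, which the paper leaves implicit.
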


Though we have restricted our study in this paper to prefactorization algebras, the tools developed here can also be used to make judgements concerning the category of factorization algebras.
Given factorization algebras $A,B$, an $\infty$-quasi-isomorphism $A\rightsquigarrow B$ of $\hoDisj$-algebras produces (see Theorem 11.4.9 of \autocite{LodayVallette2012}) a zig-zag of quasi-isomorphisms of prefactorization algebras
\[
  A \leftarrow \bullet \to \bullet \leftarrow \bullet \cdots \bullet \to B.
\]
A simple argument shows that if one of $F$ or $G$ is a factorization algebra and $F\to G$ is a quasi-isomorphism of prefactorization algebras, then the other one is also a factorization algebra.
Hence, given factorization algebras $A$ and $B$ and an $\infty$-quasi-isomorphism $A\rightsquigarrow B$ (of homotopy prefactorization algebras), we obtain an equivalence between $A$ and $B$ in the homotopy category of factorization algebras.
In other words, because the category of factorization algebras is a full subcategory of that of prefactorization algebras, to the extent that the operadic results presented here allow one to make calculations in the homotopy category of prefactorization algebras, they also allow for calculations in the homotopy category of \emph{factorization algebras}.

In a similar vein, the results presented here can be used to study deformations of factorization algebras.
Namely, given a (homotopy) prefactorization algebra $A$, the techniques of Section 12.2 of \autocite{LodayVallette2012} provide a dg Lie algebra $\fg^A$ whose Maurer-Cartan locus can be identified with the space of deformations of $A$.
Recall that, given a dg-Artinian ring $(R, \mathfrak m)$, an $R$-deformation of $A$ is an $R$-linear (homotopy) prefactorization algebra structure on $A\otimes R$ which agrees with the original structure modulo the maximal ideal $\mathfrak m$.
Note that if $A$ is a factorization algebra, then any deformation of $A$ will also be a factorization algebra, for the following reason.
The codescent condition for $A$ can be rephrased as the acyclicity of a complex $C(A)$ determined functorially from the prefactorization-algebraic structure maps of $A$.
Given an $R$-deformation of $A$, we therefore need to study the complex $C(A\otimes R)$, which has a filtration by powers of the maximal ideal $\mathfrak m$.
On the $E^0$-page of the associated spectral sequence, we obtain the cohomology of $C(A)$, which is trivial by assumption.
Hence, any deformation $A\otimes R$ of $A$ will be a factorization algebra if $A$ is.
In short, the tools developed here can be used to study the formal deformations of factorization algebras (and not just prefactorization algebras).
\subsection{Outline}

The paper is organized as follows. In Section~\ref{sec op disj}, we present the operad $\Disj_M$, and we give a simple generators-and-relations presentation thereof.
Because we imagine a mixed audience of operadic and factorization-algebraic readers, we start the section with gentle introductions for each type of reader to the other field (cf. Sections~\ref{sec fact alg for op} and~\ref{sec opsforfas}).
Next, in Section~\ref{sec disj koszul}, we prove Theorem~\ref{thmA}.
The reader interested in ``the punchline'' may skip straight to Section~\ref{sec descr htpy prefac}, in which we make explicit the definition of a $\hoDisj_M$-algebra and $\infty$-morphism thereof.
We also state Theorem~\ref{thmC} in Section~\ref{sec descr htpy prefac}.
Finally, in Section~\ref{sec examples}, we describe $\hoDisj$-algebras on some finite spaces, including the empty set, the one-point space, and the Sierpiński space.
We also extend a theorem of \textcite{CG1} concerning the factorization enveloping algebra $\tilde \cF_\fg$, which is a prefactorization algebra on $\RR$.

\subsection{Future Directions}

The Koszul property (and its proof) of the operad $\Disj_M$ opens up several avenues for future research.

The first is the study of the deformation complex of prefactorization algebras, a kind of ``homology theory'' naturally defined for algebras over (Koszul) operads.
Prefactorization algebras have properties reminiscent of commutative algebras (see Section~\ref{sec examples}).
Locally constant unital prefactorization algebras on $M = \RR^n$ are equivalent to $E_n$-algebras, i.e., homotopy associative and commutative algebras.
The deformation complex of an $E_n$-algebra satisfies the Hochschild--Kostant--Rosenberg (HKR) theorem: it splits as the (symmetric) algebras of (shifted) polyvector fields on $\RR^n$.
For $n=1$, this is the classical HKR theorem~\cite{HochschildKostantRosenberg1962}; for higher $n$, this is due to \autocite{CalaqueWillwacher2015}.
It is natural to wonder whether the deformation complex of a prefactorization algebra on $M$ splits as the algebras of polyvector fields on $M$.
Theorem~\ref{thmB} provides a tool to study this question, as it can be used to give an explicit description of the deformation complex of a prefactorization algebra on $M$.

The second avenue is the relationship with known categories of prefactorization algebras.
As mentioned above, (locally constant) prefactorization algebras on the real line $M = \RR$ are equivalent to homotopy associative algebras (i.e., $E_1$-algebras or $A_\infty$-algebras).
Given a strictly associative algebra, it is easy to produce a prefactorization algebra on $\RR$, i.e., a $\Disj_\RR$-algebra.
However, given a specific $A_\infty$-algebra, it is not clear how to construct a prefactorization algebra on $\RR$ explicitly.
Using our methods, we hope to be able to describe an explicit equivalence that produces a $\hoDisj_\RR$-algebra from an $A_\infty$-algebra in a way that covers the functors which turns an associative algebra into a $\Disj_\RR$-algebra.

Finally, one is generally interested in prefactorization algebras that are equipped with more structure.
For example, in the deformation quantization approach to quantum field theory, $\PP_0$-prefactorization algebras---whose values are shifted Poisson algebras and whose structure maps are morphisms of Poisson algebras---are of particular interest (see the introduction of~\cite{CG1}).
However, the natural extension of the presentation of $\Disj_M$ to $\PP_0$-prefactorization algebras is not quadratic-linear.
Indeed, the property of being a morphism of algebras is cubic.
This failure can be fixed e.g., by introducing new generators to the presentation, but the computational difficulty quickly becomes prohibitive.
Nevertheless, a resolution of $\PP_0$-prefactorization algebras up to homotopy would be a valuable tool.

\begin{uremark}
  In this article, we solely focus on the operad which encodes (homotopy) prefactorization algebras.
  It is, however, also important to choose a good target (symmetric monoidal) category, e.g., graded vector spaces or chain complexes.
  Prefactorization algebras are most valuable when this target category involves topological vector spaces.
  A suitable target category must have a computable derived tensor product to be useful for applications.
  There are several constructions of suitable target categories, including differentiable vector spaces~\cite[App.~B]{CG1} and liquid vector spaces~\cite{Scholze2019,ClausenScholze2022}.
\end{uremark}

\subsection{Conventions}\label{sec conventions}

\begin{enumerate}[nosep]
  \item We fix a manifold $M$ throughout, except possibly in Section~\ref{sec examples}. We may or may not make $M$ explicit in the notation; for example, $\PCosh_M$ and $\PCosh$ will both refer to the poset (or category, or colored operad) of open subsets of $M$.
  \item Throughout, we fix the ground field to be $\RR$, although the results presented here apply equally  well for any field of characteristic zero.
  \item Fixing a finite set $X$ whose elements are $x_1,\ldots, x_N$, we let $\RR\{x_1,\ldots, x_N\}$ denote the free vector space on $X$.
  \item The symbol $\SS_k$ denote the symmetric group on $k$ letters.
  \item A rooted tree $T$ is a set $V$ of vertices, a pointed set $(H,r)$ of half-edges, a map $\mathrm{inc}: H\to V$ which takes each half-edge to the vertex on which it is incident, and an involution $\sigma: H\to H$ such that $\sigma(r)=r$. The orbits of $\sigma$ are called ``edges''; the orbits of cardinality two are called ``internal edges'', $r$ is called ``the root'', and all other fixed points of $\sigma$ are called the ``leaves'' of $T$. When we draw a tree, we draw it with the leaves at the top and the root at the bottom. In the operadic analogy, the trees represent compositions from top to bottom.
  \item By a(n $\PCosh$-colored) $\SS$-module, we mean a collection $\{E(k)\}_{k \geq 0}$ of vector spaces such that each $E(k)$ is a right $\SS_k$-module. Furthermore, each $E(k)$ is required to carry a decomposition into a direct sum of vector spaces
        \begin{equation*}
          E(k)=\bigoplus_{U_1,\ldots, U_k,V\in \PCosh}E\binom{V}{U_1,\ldots, U_k}
        \end{equation*}
        such that the action of $\SS_k$ on $E(k)$ is compatible with the action of $\SS_k$ on the set $\{1,\ldots, k\}$, namely the action of $\sigma\in \SS_k$ is completely determined by isomorphisms
        \begin{equation*}
          E\binom{V}{U_1,\ldots, U_k} \to E\binom{V}{U_{\sigma^{-1}(1)},\ldots, U_{\sigma^{-1}(k)}}.
        \end{equation*}
  \item If $E$ is a graded $\PCosh$-colored $\SS$-module, $E[1]$ is the graded $\PCosh$-colored $\SS$-module whose degree $p$ components are the degree $p+1$ components of $E$. $E\{1\}$ is the following $\SS$-module:
        \begin{equation*}
          E\{1\}(k) = E(k)\otimes \sgn_k[k-1].
        \end{equation*}
        If $E$ is an operad or cooperad, $E[1]$ is not necessarily also one, but $E\{1\}$ is.
  \item \label{it not operads} Most of our notation matches that of \autocite{LodayVallette2012} except that we use cohomological grading instead of homological grading and 2) we use the symbol $E\{1\}$ to denote what in the cited reference is referred to as $\mathscr S^{-1}E$. In particular, the notation $\mu\circ_i \nu$ denotes the composition of $\nu$ in the $i$-th position in $\mu$ (with identities elsewhere), and for a quadratic-linear operad $\cP =\cT(E,R)$ (with $R\subset \cT^{(2)}(E)\oplus E$), we let $qR$ denote the projection of $R$ on $\cT{(2)}(E)$ and denote by $q\cP$ the quadratic operad $\cT(E,R)$. In certain cases, the Koszul dual $q\cP^\antishriek$ has a differential induced from the structure of $\cP$, and we let $q\cP^\antishriek$ denote both the quadratic co-operad and the quadratic dg-cooperad.
  \item Given two $\SS$-modules $E,F$, we let $E\circ F$ denote the $\SS$-module such that
        \begin{equation*}
          E\circ F (k) = \bigoplus_{k_1+\cdots+k_p=n} \left( E(p)\otimes\left( \mathrm{Ind}_{\SS_{k_1}\times \cdots \times \SS_{k_p}}^{\SS_n} (F(k_1)\otimes \cdots \otimes F(k_p))\right)\right)^{\SS_p},
        \end{equation*}
        where the superscript notation denotes taking the invariants.
        One could make a definition of $E\tilde \circ F$ analogously, instead using the \emph{co}induced representation and the $\SS_p$ \emph{co}invariants; this definition is more natural for definitions involving cooperads, but since we are in characteristic 0, there is a natural isomorphism $E\tilde \circ F \cong E\circ F$, so we make no distinction between the two.
  \item Given an $\PCosh$-colored $\SS$-module $E$, we let $\cT(E)$ denote the free ($\PCosh$-colored) operad generated by $E$. Given a non-negative integer $k$, an $\RR$-linear basis for the space of $k$-ary operations in $\cT(E)$ is given by the set of isomorphism classes of shuffle trees on $E$ with $k$ leaves. A shuffle tree on $E$ is a rooted planar tree $\gamma$ together with a labeling of the leaves of $\gamma$ by the integers $\{1,\ldots, k\}$, with a condition on this labeling which we now explain.
        By recursion,  this labeling induces a labeling of all the internal edges of $\gamma$ by integers, as follows: the output of vertex $v$ is labeled by the minimum of all labels on inputs to $v$.
        A shuffle tree is a planar tree so labeled such that the labels on the inputs to a vertex are in increasing order from left-to-right.
        Finally, to take care of the colors and the module $E$, all the internal edges of $\gamma$ have colors from $\PCosh$, and vertices of $\gamma$ are labeled by elements of appropriate arity and color in $E$.
  \item Since we have fixed the spacetime manifold $M$ and we almost exclusively consider colored operads whose colors are $\PCosh$, we reserve the right to use the term ``operad'' when ``$\PCosh$-colored operad'' is more precise.
\end{enumerate}

\subsection{Acknowledgments}

The authors would like to thank Owen Gwilliam, Guillaume Laplante-Anfossi, and Stephan Stolz for helpful discussions and questions.
E.R.\ would also like to thank Damien Calaque, Víctor Carmona, and Thomas Willwacher for helpful discussions.
N.I.\ thanks the University of Notre Dame for its hospitality.

N.I. was supported by the project HighAGT (ANR-20-CE40-0016), the project SHoCoS (ANR-22-CE40-0008), and the IdEx Université Paris Cité (ANR-18-IDEX-0001), funded by the \emph{Agence Nationale de la Recherche} (France).

\section{The operad encoding prefactorization algebras}\label{sec: background}

In this section, we describe the operad $\Disj$ which encodes prefactorization algebras (see Section~\ref{sec op disj}).
Before we do so, we will first recall the definition of prefactorization algebras, the definition of operads, and how they are related.
We fix once and for all a topological space $M$ which will not generally appear in the notation.

\subsection{Operads for factorization algebraists}\label{sec opsforfas}

There is no substitute for the excellent book of \textcite{LodayVallette2012} on the subject of algebraic operads, and we use the results of that book heavily in this paper.
Nevertheless, we will try to summarize the main points of the formalism for those who have not seen them before.

\subsubsection*{Operads, cooperads, bar and cobar constructions}

A (symmetric) operad $\cO$ is a collection of vector spaces (or chain complexes, or objects in a more general symmetric monoidal category) $\{\cO(k)\}_{k=0}^\infty$ such that $\cO(k)$ is a module for $\SS_{k}$ (the symmetric group on $k$ elements), together with a collection of maps
\begin{equation*}
  \circ: \cO(k)\otimes \cO(n_1)\otimes \cdots \otimes \cO(n_k)\to \cO(n_1+\cdots+n_k)
\end{equation*}
which are associative and respect the $\SS_n$-module structures in a natural way.
For example, one may form operads $\Com$, $\As$, and $\Lie$ which encode commutative, associative, and Lie algebras, respectively.
More explicitly, $\Com(k)$ is the trivial $\SS_k$-module, and the composition maps are the natural isomorphisms.
A number of similar definitions immediately present themselves.
For example, by reversing the direction of the arrows, one obtains the notion of a cooperad.
One may also impose a notion of unitality or counitality by privileging an element $\id$ of $\cO(1)$ that acts as an identity (co)-operation, i.e., in the operad case, $\circ(\mu,\id,\ldots, \id)=\mu$.
Finally, one may allow the output and each of the $k$ inputs of an operation $\mu\in \cO(k)$ to be labeled by elements of some fixed set of ``colors'' $S$,
and to require the composite of operations to exist only when the output labels of the elements of $\cO(n_1),\ldots \cO(n_k)$ match the input labels of the element of $\cO(k)$.
In this way, one obtains the notion of a ``colored operad.''
Henceforth, we will assume that the underlying symmetric monoidal category is $\Ch$, and when we say ``operad'' (resp. ``cooperad''), we will generally mean ``unital (resp. counital) colored operad (resp. cooperad) in the symmetric monoidal category of chain complexes''.

Given a cooperad $\cC$ and an operad $\cP$, the collection
\begin{equation*}
  \bigoplus_{k} \Hom_{\SS_k}(\cC(k),\cP(k))
\end{equation*}
has a natural structure of a dg Lie algebra (the symbol $\Hom$ denotes here the internal hom in chain complexes).
We thus obtain a natural bifunctor
\begin{equation*}
  \Tw(\_, \_): \CoOp \times \Op \to \mathrm{Set}
\end{equation*}
which takes the Maurer-Cartan elements of this dg Lie algebra (Maurer-Cartan elements in this dg Lie algebra have the special name ``twisting morphism'', hence the notation).
It turns out that, once some small restrictions are placed on the categories of operads and cooperads under consideration, there is a left-right adjoint pair of functors
\begin{equation*}
  \Omega : \CoOp \leftrightarrows \Op: B
\end{equation*}
which represent the bifunctor $\Tw$, i.e., there are natural isomorphisms
\begin{equation*}
  \Hom_{\Op}(\Omega \cC, \cP)\cong \Tw(\cC,\cP)\cong \Hom_{\CoOp}(\cC,B\cP).
\end{equation*}
(To be precise, the true domains and codomains of these functors are the categories of \emph{conilpotent} cooperads and \emph{augmented} operads.)
The counit of the adjunction, $\Omega B \cP \to \cP$, induces a quasi-isomorphism of operads.
The goal of the Koszul theory of operads is to provide a smaller resolution of $\cP$ via an operad of the form $\Omega \cC$, for some sub-cooperad $\cC\hookrightarrow B\cP$.

\subsubsection*{Koszul theory for associative algebras}

Before we describe the Koszul theory of general operads, it is worthwhile to consider the special case of operads for which $\cO(k)=0$ unless $k=1$.
This is the case of (dg) associative, unital algebras.

As we mentioned parenthetically above, the functor $B$ is defined only on the category of augmented operads; in the case of unital algebras, the operadic augmentation translates into an augmentation for associative algebras, i.e., we must consider algebras equipped with an algebra map $\epsilon: A\to \RR$.
So, given an augmented associative (possibly dg) algebra $(A,\mu, \epsilon)$, we may form the bar-cobar resolution $\Omega B A$, as for operads, and this algebra is generated by elements of the form
\begin{equation*}
  [a_1|\cdots|a_n]\in A^{\otimes n},
\end{equation*}
where each $a_i\in \ker(\epsilon)$ and $n$ is any non-negative integer.
Such an element has cohomological degree $1-n + \sum_{i}|a_i|$, and the bar-cobar differential is defined by the equation
\begin{align*}
  d[a_1|\cdots|a_n] & = \sum_{i=1}^{n-1}(-1)^{i+|a_1|+\cdots+ |a_{i}|}[a_1|\cdots |\mu(a_i ,a_{i+1})|\cdots |a_n] \\
                    & +\sum_{i=1}^{n-1}(-1)^{|a_1|+\cdots+|a_i|-i+1}[a_1|\cdots|a_i]\cdot [a_{i+1}|\cdots|a_n],
\end{align*}
where the symbol $\cdot$ denotes the multiplication in the semi-free algebra $\Omega B A$.
So, whereas in the algebra $A$ we had elements $a,b$ whose product is $\mu(a,b)$, in the algebra $\Omega B A$ we have the elements $[a],[b]$ whose product is $[a]\cdot[b]$ and the equation $d[a|b]= \pm([\mu(a,b)]-[a]\cdot [b])$.
In particular, if there is a relation of the form $\mu(a,b)= \mu(a',b')$ in $A$, this relation no longer holds on the nose in terms of the product $\cdot$ on $\Omega B A$; instead, one has
\begin{equation*}
  [a]\cdot [b]- [a']\cdot [b'] = \pm d\left( [a'|b']-[a|b]\right).
\end{equation*}
Furthermore, the algebra $\Omega B A$ has homotopies between the homotopies $[a|b]$ and homotopies between those homotopies and so on.

This resolution has the benefit of being well-defined for any algebra $A$.
Its drawback is that it is very large: even if $A$ is finitely generated, $\Omega B A$ is not.
Even more, in the bar-cobar resolution, even the space of generators of weight one is not necessarily finite-dimensional when $A$ is finitely generated (where the \emph{weight} of $[a_1|\cdots |a_n]$ is $n$).
To this end, it is desirable to look for smaller resolutions in case $A$ is known to be described by a simple set of generators and relations.
This is the goal of Koszul theory (see, e.g., \autocite{Priddy} for the quadratic-linear case), which applies to algebras of the form $\cT(V,R)$, where $V$ is a vector space, $R\subseteq V^{\otimes 2}\oplus V\oplus \RR$, and $\cT(V,R)$ is the algebra generated by $V$ subject only to the relations in $R$.
The purpose of Koszul theory is to find a sub-coalgebra $A^\antishriek \to BA$ such that the composite $\Omega A^\antishriek \to \Omega B A \to A$ is still a quasi-isomorphism.
In fact, for any algebra of the form $\cT(V,R)$, there is a natural candidate for $A^\antishriek$; one says that an algebra is \emph{Koszul} if the map $\Omega A^\antishriek\to A$ is a quasi-isomorphism.

The benefit of this construction is that if $A$ is described by quadratic relations, then $A^\antishriek$ is described by quadratic corelations, whereas $BA$ is freely cogenerated by $A[1]$.
This can dramatically reduce the number of generators of the semi-free resolution.
For example, if $A=\cT(x)$ is freely generated by an element $x$, then the bar-cobar resolution is freely generated by elements of the form
\begin{equation*}
  [x^{i_1}|\cdots |x^{i_n}];
\end{equation*}
by contrast, the resolution of $A$ given by Koszul theory is just $A$ again (since $A$ was free to begin with, there was no need to provide it with a new resolution).
In this case, $A^\antishriek = \RR\{1,\epsilon\}$ where $|\epsilon|=-1$ and $\Delta(\epsilon)= 1\otimes \epsilon + \epsilon \otimes 1$.
In general, $A^\antishriek$ need not be finite-dimensional even if $A$ is finitely-generated, so that the Koszul resolution of a finitely-generated algebra is not necessarily finitely-generated.
But it will always be the case that the weight grading on $B A$ descends to $A^\antishriek$ and that the weight-one component of $A^\antishriek$ is finite-dimensional if $V$ is.
(In the example of $A=T(x)$, something stronger is true: the weight one component of $\Omega B A$ is finite-dimensional, and $\Omega A^\antishriek$ is finitely-generated.)

\emph{Mutatis mutandis}, the preceding discussion applies equally well to more general operads.
Moreover, many of the standard operads--including the associative, commutative, and Lie operads--are candidates for the application Koszul theory, since e.g., the associativity relation $\mu(\mu(a,b),c)=\mu(a,\mu(b,c))$ is quadratic in $\mu$.
Indeed, Koszul theory for these three operads produces the $A_\infty$, $C_\infty$, and $L_\infty$ operads, respectively.
In Section~\ref{sec: background}, we show that the operad $\Disj$ encoding prefactorization algebras on a fixed manifold $M$ has a quadratic-linear generators-and-relations presentation (cf. Proposition~\ref{prop: gensrels}); the main idea is that the operad is generated by the unary operations $m_U^V$ for any inclusion $U\subset V$ and the binary operations $m_{U,V}^{U\sqcup V}$ for any disjoint pair of sets.

\subsection{Factorization algebras for operadists}\label{sec fact alg for op}

Let us now briefly introduce the notion of (pre)factorization algebras.
A more detailed discussion can be found in \autocite{Ginot2015,CG1}.

A bird's eye view of (pre)factorization algebras is that they are algebraic structure similar to (pre)cosheaves which are ``multiplicative'' in nature.
The core difference between (pre)factorization algebras and (pre)cosheaves is that, while the value of a (pre)cosheaf on a disjoint union of open sets is the direct sum of the values on the individual open sets, the value of a (pre)factorization algebra on a disjoint union of open sets is equipped with a map from the tensor product of the values on the individual open sets.
While this difference may seem innocuous, it has a profound effect on the structure of (pre)factorization algebras.

Cosheaves are used to define a cohomology theory of arbitrary topological spaces with coefficients in abelian groups (Čech cohomology).
On the other hand, factorization algebras are used to define a homology theory of manifolds with coefficients in algebras over the little disks operads, i.e., $E_n$-algebras (through factorization homology).
While Čech cohomology is homotopy invariant, factorization homology is not: homotopy equivalent manifolds may have differing factorization homology groups if they are not diffeomorphic.
Factorization algebras are thus much more sensitive to the geometry of the underlying space than cosheaves are.
This sensitivity is, of course, dependent on the choice of coefficients: if we take the space of coefficients to be a commutative algebra (or $E_\infty$-algebra), then factorization homology becomes homotopy invariant as it reduces to higher Hochschild homology~\cite{Pirashvili2000}.

This power comes at a cost, though: the structure of factorization algebras is much more complicated than that of cosheaves.
For example, the category of cosheaves on a topological space is additive, while the category of factorization algebras on a manifold is not.
The description of the descent property (on which we will not talk about in this article) satisfied by cosheaves is also much simpler than that of factorization algebras.
Even if we restrict ourselves to prefactorization algebras (i.e., we ignore the descent condition), then the structure of the category of prefactorization algebras is still much more complicated than that of cosheaves.
In operadic terms, our goal in this article is to explain that the operad governing prefactorization algebras is the composition product of the operads governing cosheaves and the operad governing a certain notion of colored commutative algebras (cf. Lemma~\ref{lem: compprod}).
We will prove separately that both components of the product are Koszul operads (cf. Lemmas~\ref{lem: openkoszul} and~\ref{lem: tenskoszul}), and prove that the distributive law between the two satisfies the diamond lemma, from which the Koszul property of the composition product follows (cf. Theorem \ref{thm: disjkoszul}).
We then deduce an explicit description of the Koszul dual (and thus of a resolution) of the operad governing prefactorization algebras (cf. Lemma~\ref{lem: infdecomp} and Proposition~\ref{prop: hodisjexplicit}).

\subsection{The operad \texorpdfstring{$\Disj$}{Disj}: definitions and basic properties}\label{sec op disj}

\begin{definition}\label{def disj}
  The colored $\RR$-linear operad $\Disj$ has as its colors the open subsets $U\subseteq M$.
  Given open subsets $U_1, \dots, U_k, V \subseteq M$, we set:
  \begin{equation*}
    \Disj{\binom{V}{U_1,\ldots, U_k}}
    \coloneqq
    \begin{cases}
      \RR\{m^V_{U_1, \dots, U_k}\}, & \text{if } U_i \subseteq V \text{ and the } U_i \text{ are pairwise disjoint}; \\
      \{0\},                        & \text{else}.
    \end{cases}
  \end{equation*}
  The composition map
  \begin{equation*}
    \textstyle
    \Disj\binom{W}{V_1,\ldots, V_k} \otimes \Disj\binom{V_1}{U_{11},\ldots, U_{1n_1}}\otimes \dots \otimes \Disj\binom{V_k}{U_{k1},\ldots, U_{kn_k}}
    \to
    \Disj\binom{W}{U_{11},\ldots, U_{kn_k}}
  \end{equation*}
  is zero when any of the factors in the domain or codomain is zero, and otherwise is the natural isomorphism $\RR^{\otimes (k+1)}\to \RR$.
  A permutation $\sigma\in \SS_k$ sends the generator $m_{U_1,\dots,U_k}^V$ to $m_{U_{\sigma^{-1}(1)},\dots,U_{\sigma^{-1}(k)}}^V$.
\end{definition}

\begin{definition}\label{def pfa}
  A \textbf{prefactorization algebra} is an algebra over $\Disj$.
\end{definition}

\begin{definition}\label{def pcosh}
  We let $\PCosh$ denote the poset of open subsets of $M$.
  This poset defines a category, i.e., a colored operad with only unary operations.
\end{definition}

Note that $\PCosh$ is a sub-operad of $\Disj$.
Occasionally, when we want to make explicit the underlying space $M$, we will also write $\Disj_M$ or $\PCosh_M$.
A prefactorization algebra consists of
\begin{enumerate}
  \item A cochain complex $\mathcal F(U)$ for every open subset $U \subseteq M$.
  \item A map
        \begin{equation*}
          m_{U_1,\ldots, U_k}^V: \cF(U_1)\otimes \cdots \cF(U_k) \to \cF(V)
        \end{equation*}
        for any collection $\{U_i\}$ of pairwise disjoint open subsets of $V$.
\end{enumerate}
These data are required to satisfy the following relations:
\begin{description}
  \item[Symmetry] Given a permutation $\sigma\in \SS_k$, the following diagram
    \begin{equation*}
      \begin{tikzcd}[row sep = large]
        \cF(U_1)\otimes\cdots \otimes \cF(U_k)
        \ar[r,"\sigma"]
        \ar[rd,"m_{U_1,\ldots, U_k}^V"']
        & \cF(U_{\sigma(1)})\otimes \cdots \otimes \cF(U_{\sigma(k)})
        \ar[d,"m_{U_{\sigma(1)},\ldots, U_{\sigma(k)}}^V"]\\
        & \cF(V)
      \end{tikzcd}
    \end{equation*}
    commutes, where the horizontal arrow is induced from the symmetric monoidal structure on the category of cochain complexes (including the usual Koszul signs).
  \item[Associativity] Given any pairwise disjoint collection $\{V_j\}_{j=1}^r$ of subsets of $W$ and, for each $j$, a collection $\{U_{ji}\}_{i=1}^{k_j}$ of pairwise disjoint subsets of $V_j$, the following diagram commutes:
    \begin{equation*}
      \begin{tikzcd}[column sep = 8em, row sep = large]
        \cF(U_{11})\otimes\cdots \otimes \cF(U_{rk_r})
        \ar[r,"\bigotimes_{j=1}^r m_{U_{j1},\ldots, U_{jk_j}}^{V_j}"]
        \ar[rd,"m_{U_{11},\ldots, U_{rk_r}}^W"']
        & \cF(V_1)\otimes \cdots \otimes \cF(V_r)
        \ar[d,"m_{V_1,\ldots, V_r}^W"]\\
        & \cF(W)
      \end{tikzcd}
    \end{equation*}
\end{description}

\begin{remark}\label{rmk maps to com}
  There are maps of colored operads
  \begin{equation*}
    \Disj \to \Com,\quad \Com\to \Disj.
  \end{equation*}
  The first map covers the unique map of labels $\PCosh\to \{\ast\}$.
  The second map covers the map of sets of labels $\{\ast\}\to \PCosh$ which ``picks out'' the empty set amongst the open subsets of $M$.
  At the level of algebras, the first map of operads extracts from a prefactorization algebra $\cF$ the commutative algebra $\cF(\emptyset)$, while the second map of operads assigns to a commutative algebra $A$ the prefactorization algebra $\cF_A$ such that $\cF_A(U)=A$ for all open subsets $U\subseteq M$.
  We will see that, on account of these comparisons (in particular the latter one), the operad $\Disj$ behaves similarly to the commutative operad.
\end{remark}

Now, we give a generators-and-relations presentation of $\Disj$ which will enable us to compute its cofibrant resolution using Koszul duality theory.
\begin{definition}
  Given open sets $U \subset V$, we let $\ext \coloneqq m_U^V$ denote the (unary) generator of $\Disj\binom{V}{U}$.
  Given two disjoint open sets $U$ and $V$, we let $\bin \coloneqq m_{U,V}^{U \sqcup V}$ denote the (binary) generator of $\Disj\binom{U\sqcup V}{U,V}$.
\end{definition}

\begin{lemma}\label{lem disj generators}
  The operad $\Disj$ is generated by the operations of the form $\ext$ and $\bin$.
\end{lemma}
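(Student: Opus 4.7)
The plan is to prove by induction on arity that every basis element $m_{U_1,\ldots,U_k}^V$ of $\Disj\binom{V}{U_1,\ldots,U_k}$ (with $k \geq 1$) lies in the suboperad generated by the $\iota_U^V$ and $\mu_{U,V}$. The key observation to make first is that operadic composition in $\Disj$ is essentially forced: each hom space is at most one-dimensional, and the composition maps are defined to be the natural isomorphism $\RR^{\otimes (k+1)}\to \RR$ whenever all factors are nonzero. Hence any nonzero composite of generators must coincide with the unique generator of the target hom space, with scalar $+1$.

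Concretely, I would first record the two compositional identities
\begin{equation*}
  \iota_{W}^V \circ m_{U_1,\ldots,U_j}^{W} = m_{U_1,\ldots,U_j}^V, \qquad \mu_{W_1,W_2} \circ_1 m_{U_1,\ldots,U_j}^{W_1} = m_{U_1,\ldots,U_j,W_2}^{W_1 \sqcup W_2},
\end{equation*}
both valid whenever the displayed collection of open sets satisfies the disjointness and inclusion conditions required for the right-hand sides to be nonzero. Each identity is an immediate verification from Definition~\ref{def disj}.

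The induction on $k$ then runs as follows. For the base case $k = 1$, we have $m_U^V = \iota_U^V$, which is a declared generator. For the inductive step $k \geq 2$, setting $W := U_1 \sqcup \cdots \sqcup U_{k-1}$, the two identities above yield
\begin{equation*}
  m_{U_1,\ldots,U_k}^V = \iota_{W \sqcup U_k}^V \circ_1 \bigl( \mu_{W, U_k} \circ_1 m_{U_1,\ldots,U_{k-1}}^{W} \bigr),
\end{equation*}
and the inductive hypothesis expresses $m_{U_1,\ldots,U_{k-1}}^{W}$ in terms of $\iota$'s and $\mu$'s, completing the argument.

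I do not anticipate a real obstacle here: the proof is pure bookkeeping in an operad whose underlying $\SS$-module is extremely rigid. The only mild subtlety worth flagging is the treatment of nullary operations: if one takes the definition at face value, $\Disj\binom{V}{}$ is one-dimensional, and such elements cannot be produced from purely unary and binary generators. Either one restricts the lemma's scope to $k\geq 1$, or one must include the nullary generator $m^\emptyset$ separately (after which $m^V = \iota_\emptyset^V \circ m^\emptyset$ handles the remaining cases).
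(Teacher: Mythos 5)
Your proof is correct and is essentially the paper's own argument: the paper simply writes down the closed-form ``left comb'' decomposition $m_{U_1,\ldots,U_k}^V = m_{U_1\sqcup\cdots\sqcup U_k}^V\circ_1 m_{U_1\sqcup\cdots\sqcup U_{k-1},U_k}^{U_1\sqcup\cdots\sqcup U_k}\circ_1\cdots\circ_1 m_{U_1,U_2}^{U_1\sqcup U_2}$, which is exactly what your induction unrolls to. Your side remark about nullary operations is a fair observation about an imprecision in Definition~\ref{def disj} (the paper implicitly takes $k\geq 1$, consistent with its later use of the non-unital commutative operad), but it does not affect the substance of the proof.
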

\begin{proof}
  This follows from the following equation, which holds for any pairwise disjoint open sets $U_1, \dots, U_k$ contained in an open set $V$:
  \begin{equation}\label{eq: generators}
    m_{U_1,\ldots, U_k}^V
    = m_{U_1\sqcup\cdots\sqcup U_k}^V\circ_1 m_{U_1\sqcup\cdots \sqcup U_{k-1},U_k}^{U_1\sqcup \cdots \sqcup U_k}\circ_1 m_{U_1\sqcup \cdots\sqcup U_{k-2},U_{k-1}}^{U_1\sqcup\cdots\sqcup U_{k-1}}\circ_1\cdots \circ_1 m_{U_1,U_2}^{U_1\sqcup U_2},
  \end{equation}
  where, as in \autocite{LodayVallette2012}, the notation $\circ_1$ means composition in the first slot/position.\qedhere
\end{proof}

\begin{definition}
  Let $E$ be the $\PCosh$-colored $\SS$-module defined as follows:
  \begin{align*}
    E(1) & = \bigoplus_{U\subsetneq V} \RR\{ \ext\},
         &
    E(2) & = \bigoplus_{U\cap V = \emptyset} \RR\{\bin\},
         &
    E(k) & = 0 \text{ for } k \not\in \{1,2\}.
  \end{align*}
\end{definition}

We may form the free $\PCosh$-colored operad $\cT(E)$ on $E$.
This free operad is weight-graded, by the number of generating operations (i.e., the number of vertices in the trees).
We let $\cT^{(k)}(E)$ denote the weight $k$ part of $\cT(E)$.

\begin{definition}
  Fix an ordered triple $(U,V,W)$ of open subsets of $M$.
  Following \autocite[Section~7.6.3]{LodayVallette2012}, we obtain the following $\RR$-linear generators for the subspace of $\cT^{(2)}(E)$ corresponding to trees whose vertices are labeled by the binary generators $\bin[\_][\_]$:
  \begin{align*}
    \taui{I}   & \coloneqq \bin[U\sqcup V][W] \circ_1\bin, \\
    \taui{II}  & \coloneqq \bin[W\sqcup U][V]\circ_1 \bin[U][W]\cdot (23), \\
    \taui{III} & \coloneqq \bin[U][V\sqcup W]\circ_2 \bin[V][W].
  \end{align*}
  Figure \ref{fig: weighttwotrees} depicts these operations.
\end{definition}

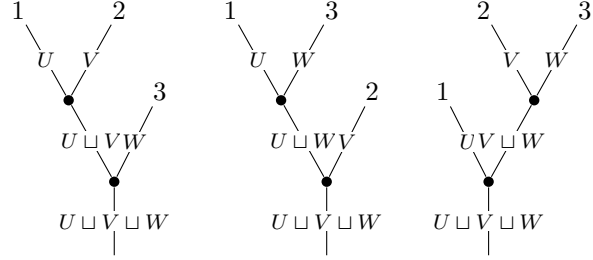
\begin{figure}[htbp]
  \centering
  \begin{forest}{operad}
    [[,lb={$U \sqcup V \sqcup W$}
            [,lb={$U \sqcup V$}
                [1, lb={$U$}]
                [2, lb={$V$}]]
            [3, lb={$W$}]
        ]]
  \end{forest}
  \quad
  \begin{forest}{operad}
    [[,lb={$U \sqcup V \sqcup W$}
            [,lb={$U \sqcup W$}
                [1, lb={$U$}]
                [3, lb={$W$}]]
            [2, lb={$V$}]
        ]]
  \end{forest}
  \quad
  \begin{forest}{operad}
    [[,lb={$U \sqcup V \sqcup W$}
            [1, lb={$U$}]
            [,lb={$V \sqcup W$}
                [2, lb={$V$}]
                [3, lb={$W$}]]
        ]]
  \end{forest}
  \caption{The generators $\taui{I}$,  $\taui{II}$, and $\taui{III}$ for the space of operations which have weight two and use only binary generators. All three are operations of color $\binom{U\sqcup V\sqcup W}{U,V,W}$.}\label{fig: weighttwotrees}
\end{figure}

\begin{remark}
  To give a sense of how the symmetric group acts on these generators, note the relation:
  \begin{equation*}
    \taui{I}\cdot (123) = \taui{II}[U,W,V]
  \end{equation*}
\end{remark}

\begin{definition}
  Let $R \coloneqq R_1\oplus R_2\oplus R_3$ denote the sub-$\SS$-module of $\cT^{(2)}(E)\oplus E$ defined by:
  \begin{align*}
    R_1 & \coloneqq \bigoplus_{U \subset V \subset W} \RR\left\{ \ext[V][W]\circ \ext - \ext[U][W]\right\}, \\
    R_2 & \coloneqq \bigoplus_{\substack{(U,V,W) \\ U\cap V = \emptyset\\ U\cap W = \emptyset\\ V\cap W=\emptyset}}\RR\left\{\taui{I}-\taui{II},\taui{II}-\taui{III}\right\}\\
    R_3 & \coloneqq \bigoplus_{\substack{U\subset V \\V\cap W=\emptyset}}\RR\{\bin[V][W]\circ_1\ext-\ext[U\sqcup W][V\sqcup W]\circ_1\bin[U][W]\}\\
        & \oplus \bigoplus_{\substack{W\subset V \\V\cap U=\emptyset}}\RR\{\bin\circ_2 \ext[W][V]- \ext[U\sqcup W][U\sqcup V]\circ_1 \bin[U][W]\}.
  \end{align*}
\end{definition}

We have written $R$ in this way to highlight a few major points.
Note that the space of relations is split depending on the type of generators used (exclusively unary, exclusively binary, or mixed).
The space $R_1$ consists of unary operations, $R_2$ of ternary operations, and $R_3$ of binary operations.
Moreover, let us note that we have the following inclusions:
\begin{align*}
  R_1 & \subseteq \cT^{(2)}(E) \oplus E, & R_2, R_3 & \subseteq \cT^{(2)}(E).
\end{align*}
In other words, the relations in $R_2$ and $R_3$ are homogeneous quadratic in the generators, whereas $R_1$ contains quadratic-linear relations, i.e., sums of generators and compositions of two generators.

\begin{proposition}\label{prop: disjpcosh}
  The operad $\cT(E(1), R_1)$ (generated by the $\ext$, subject to the relations in $R_1$) is isomorphic to $\PCosh$, the operad encoding precosheaves on $M$ (Definition~\ref{def pcosh}).
\end{proposition}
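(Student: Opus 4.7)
The plan is to define the isomorphism explicitly and verify it by a normal form argument on words in the unary generators.

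First I would define a morphism of colored operads $\phi\colon \cT(E(1), R_1)\to \PCosh$ by sending each generator $\ext$ to the corresponding inclusion morphism $U\subset V$ in $\PCosh$. To see this is well-defined, it suffices to check that the relations in $R_1$ hold in $\PCosh$: for any chain $U\subseteq V\subseteq W$, the composite of the inclusions $U\hookrightarrow V$ and $V\hookrightarrow W$ is the inclusion $U\hookrightarrow W$, which is tautological in the poset. Once we have $\phi$, showing it is an isomorphism amounts to showing that on each color pair $\binom{V}{U}$, both sides are one-dimensional when $U\subseteq V$ and zero otherwise.

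The key computation is a normal form for $\cT(E(1))\binom{V}{U}$. Since $E(1)$ has only unary operations, the free operad contains no branching: a basis for $\cT(E(1))\binom{V}{U}$ is given by strict chains $U = U_0\subsetneq U_1\subsetneq\cdots\subsetneq U_n = V$ (interpreted as the composition $\iota_{U_{n-1}}^{U_n}\circ\cdots\circ\iota_{U_0}^{U_1}$), together with the identity when $U=V$. Using the relation $\ext[V][W]\circ\ext = \ext[U][W]$ in $R_1$ iteratively, any chain of length $n\geq 2$ is identified with a chain of length one, so the quotient $\cT(E(1), R_1)\binom{V}{U}$ is spanned by a single element: the generator $\ext$ if $U\subsetneq V$, the identity if $U=V$, and $0$ if $U\not\subseteq V$. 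This matches $\PCosh\binom{V}{U}$ dimension-wise.

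Finally, since $\phi$ sends the spanning vector on the left to the spanning vector on the right in each color pair, it is an isomorphism. There is no serious obstacle; the only delicate point is bookkeeping between strict inclusions (which give generators in $E(1)$) and the identity case $U=V$ (which is handled by the unit of the free operad), and checking that the poset structure of $\PCosh$, in particular the antisymmetry of $\subseteq$, guarantees that the chains above cannot loop back and produce extra dimensions.
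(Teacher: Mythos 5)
Your proposal is correct and follows essentially the same route as the paper: reduce a generic composite of unary generators to a single $\ext[U][V]$ using the relations in $R_1$, and observe that the resulting map to $\PCosh$ is then a color-wise bijection between one-dimensional (or zero) spaces. Your extra bookkeeping about the identity operation and the strictness of the chains is consistent with the paper's setup and adds nothing that would change the argument.
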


\begin{proof}
  A generic element of $\cT(E(1), R_1)$ is of the form
  \begin{equation*}
    \ext[U_k][U_{k-1}] \circ \dots \circ \ext[U_1][U_0],
  \end{equation*}
  with $U_0 \subset \dots \subset U_k$.
  Applying repeatedly the relations in $R_1$, we see that this element is equal to $\ext[U_k][U_0]$.
  The morphism of operads $\cT(E(1), R_1) \to \PCosh$ is thus bijective.
\end{proof}


\begin{definition}
  Let $\Tens$ be the operad $\cT(E(2), R_2)$, i.e., the colored binary operad generate by elements of the form $\bin$ subject to the relations in $R_2$.
  We will call algebras over $\Tens$ \textbf{tensor systems}.
\end{definition}

This colored operad resembles a colored version of the commutative operad.
The relation is made precise in Section~\ref{sec finite}.

The space $R_3$ encodes the relations between the binary generators and the unary ones.
Given any composable unary operation $\iota$ and binary operation $\mu$, the relations in $R_3$ tell us how to rewrite the composites $\mu\circ_1 \iota$ and $\mu\circ_2 \iota$ in terms of a composition where the binary operation is performed first.
We we will see (Lemma~\ref{lem: compprod}) that this gives a rewriting rule: namely, one can write
\begin{equation*}
  \cT(E,R) \cong \PCosh \circ \Tens.
\end{equation*}
We will see in the next proposition that $\cT(E,R) \cong \Disj$ (Proposition~\ref{prop: gensrels}).
Hence, we can conclude that $\Disj$ can be written as a composition product of the operads $\PCosh$ and $\Tens$.

Having established the notation, we may prove:
\begin{proposition}\label{prop: gensrels}
  There is an isomorphism $\Phi: \cT(E,R)\to \Disj$ of $\PCosh$-colored operads.
\end{proposition}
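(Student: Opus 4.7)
The plan is to define $\Phi$ on generators by $\Phi(\ext) = m_U^V$ and $\Phi(\bin) = m^{U \sqcup V}_{U,V}$, extend freely to a morphism $\cT(E) \to \Disj$ of $\PCosh$-colored operads, and then verify that $\Phi$ descends to the quotient by $R$. The well-definedness check amounts to observing that each component $\Disj\binom{X}{Y_1, \dots, Y_n}$ is at most one-dimensional (spanned by $m^X_{Y_1, \dots, Y_n}$ when the $Y_i$ are pairwise disjoint and contained in $X$), so that both sides of each generating relation in $R_1$, $R_2$, $R_3$ manifestly map to the same generator of $\Disj$. For instance, the image of a relation in $R_1$ compares $m_V^W \circ m_U^V$ and $m_U^W$, which coincide by the definition of composition in $\Disj$; the checks for $R_2$ and $R_3$ are analogous.

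Surjectivity of $\Phi$ follows immediately from Lemma~\ref{lem disj generators}, as equation~\eqref{eq: generators} expresses each generator $m^V_{U_1, \dots, U_k}$ of $\Disj$ as an iterated composition of images of $\ext$'s and $\bin$'s under $\Phi$.

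For injectivity, I would argue that $\cT(E,R)\binom{V}{U_1, \dots, U_k}$ has dimension at most one, which combined with the surjectivity above forces $\Phi$ to be a bijection on each component. The strategy is to exhibit a normal form: every representative tree of an element of $\cT(E,R)\binom{V}{U_1, \dots, U_k}$ can be rewritten, using the relations in $R$, to a fixed representative shaped like the right-hand side of equation~\eqref{eq: generators}. Using $R_3$, any unary generator sitting below a binary one can be traded (up to readjusting its colors) for one sitting above; so every tree is equivalent to one in which all unary vertices lie above all binary vertices. Applying $R_1$ repeatedly, the resulting chain of nested extensions collapses to a single $\ext[X][V]$ with $X = U_1 \sqcup \cdots \sqcup U_k$. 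Finally, the relations in $R_2$ are a colored analogue of the associativity relations for a commutative product, and force any two planar binary trees with the same leaf labeling to coincide.

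The main obstacle is verifying that this rewriting procedure is confluent, so that the resulting normal form is independent of the sequence of rewrites chosen. Concretely, one must examine the critical pairs arising from overlaps of the rewriting rules derived from $R_1 \cup R_2 \cup R_3$ on trees of weight three (the analogues of the $\taui{I}, \taui{II}, \taui{III}$ for mixed unary-binary trees), and check that each critical pair can be resolved to the same normal form; this is a diamond-lemma verification, foreshadowing the composition-product decomposition $\cT(E,R) \cong \PCosh \circ \Tens$ that will be established in Lemma~\ref{lem: compprod}. Once confluence is in hand, uniqueness of normal forms gives the desired dimension bound, and since the normal form vanishes whenever the color datum $(V; U_1,\dots,U_k)$ is inadmissible, the resulting map matches $\Disj$ exactly.
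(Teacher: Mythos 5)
Your argument is essentially the paper's proof: define $\Phi$ on the generators, observe that it kills $R$ because each component of $\Disj$ is at most one-dimensional, deduce surjectivity from Equation~\eqref{eq: generators}, and prove injectivity by rewriting an arbitrary tree to the standard form of Equation~\eqref{eq: generators} (first $R_3$, then $R_1$, then $R_2$), so that each component of $\cT(E,R)$ has dimension at most one. The one place you diverge is in declaring confluence of the rewriting system to be ``the main obstacle'': it is not needed for this proposition, and the reason is worth internalizing. Each rewriting step is an application of a relation in $R$, hence an \emph{equality} in the quotient $\cT(E,R)$; so the mere existence of \emph{some} terminating sequence of rewrites carrying every tree to the single fixed standard-form tree already shows the component is spanned by one element. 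You do not need different rewriting sequences to agree, because every output of every sequence is a priori equal, in the quotient, to the tree you started from. Uniqueness of normal forms would only be required to compute the dimension of $\cT(E,R)$ intrinsically (i.e., to rule out further collapse below dimension one), and that lower bound is exactly what surjectivity of $\Phi$ onto the one-dimensional components of $\Disj$ supplies for free --- this is the standard division of labor in a generators-and-relations isomorphism proof. (A diamond-lemma argument does appear later, for Koszulity, but even there the paper avoids critical pairs by comparing with the explicit $\SS$-module $q\PCosh\circ\Tens$ in Lemma~\ref{lem: compprod}.) Two small cautions: the $R_3$ relations can always be applied in the direction that pushes an extension $\ext$ from an \emph{input} of a $\bin$ to its \emph{output} (toward the root, where the ladder collapses via $R_1$ to the single $\ext[U_1\sqcup\cdots\sqcup U_k][V]$), but not conversely, since an arbitrary extension of $U\sqcup W$ need not factor as an extension of $U$ alone --- so make sure your ``above/below'' matches the paper's leaves-at-top convention; and the point that genuinely deserves a sentence (which both you and the paper pass over quickly) is \emph{termination} of the procedure, e.g., by noting that each $R_3$-move strictly decreases the number of unary vertices separated from the root by a binary vertex.
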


Before embarking on the proof of the proposition, let us describe the structure maps $\ext$ and $\bin$ in two common cases.
In the first case, given a commutative algebra $(C,\mu)$, we can construct the prefactorization algebra $\cF_C$ which assigns $C$ to any open subset $U\subseteq M$ ($M$ can be an arbitrary space).
In this case, we have $\ext = \id_C$ and $\bin=\mu$, so the maps $\ext$ are ``boring'' and the maps $\bin$ are ``interesting''.
In the second common case, let $(A,\mu,\eta)$ be an associative, unital algebra; one may form the factorization algebra $\cF_A$ on $\RR$ which assigns $\otimes_{\pi_0(U)}A$ to any open subset $U$ of $\RR$.
In this case, the maps $\ext$ are ``interesting'' and the maps $\bin$ are ``boring''.
More precisely, the map $\bin$ is given by the natural associator isomorphism
\begin{equation*}
  \left(\bigotimes_{\pi_0(U)}A\right)\otimes \left(\bigotimes_{\pi_0(V)}A\right)\to \bigotimes_{\pi_0(U\sqcup V)}A,
\end{equation*}
while the map $\ext$ is determined by the structures $\mu$ and $\eta$ on $A$, e.g.,
\begin{equation*}
  \ext[\emptyset][(0,1)]=\eta, \quad \ext[(-1,0)\sqcup(0,1)][(-1,1)]=\mu.
\end{equation*}

In the previous paragraph, the assignment $C\mapsto \cF_C$ gives a functor $\Com\text{-}\Alg\to \Disj\text{-}\Alg$; and it turns out that this assignment is induced via pullback from a map of operads $\Disj\to \Com$.
There is also a map of operads the other way, which at the level of algebras sends a prefactorization algebra $\cF$ to its value $\cF(\emptyset)$ on the empty set.
(It turns out that the two functors are adjoints, with the first functor the left adjoint.)
As we will see in Section~\ref{sec finite}, these relationships will extend to the resolutions: namely, let $\hoDisj$ denote the resolution $\Omega \Disj^\antishriek$ mentioned in the introduction (cf. Theorem~\ref{thmB}).
Similarly, let $C_\infty$ denote the resolution of $\Com$ obtained via Koszul theory.
Then, there is a pair of functors
\begin{equation}
  \label{eq: cinftyhodisj}
  C_\infty\text{-}\Alg\leftrightarrows \hoDisj_M\text{-}\Alg.
\end{equation}

By contrast, the functor $A\mapsto \cF_A$ which maps a unital, associative algebra to a prefactorization algebra on $\RR$ does not arise from a map of (colored) operads $\Disj\to u\As$ (the $u$ standing for ``unital'').
Indeed, the structure maps of the algebra $A$ appear in the unary generating operations of $\cF_A$, as opposed to the binary operations.
This suggests that the existence of the functor $A\mapsto \cF_A$ has more to do with the topology of open subsets of the real line than with general algebraic properties of the colored operad $\Disj$.
Consequently, though one may imagine the existence, by analogy with Equation \eqref{eq: cinftyhodisj}, of a functor
\begin{equation*}
  uA_\infty \text{-}\Alg\to \hoDisj_\RR\text{-}\Alg
\end{equation*}
(where $uA_\infty$ is the Koszul resolution of the operad governing unital associative algebras, see \autocite{HirshMilles}), such a functor does not immediately present itself using our methods.
There is, nevertheless, the composite functor
  \[
    uA_\infty \text{-}\Alg \to u\As\text{-}\Alg \to\Disj_\RR\text{-}\Alg \to \hoDisj_\RR\text{-}\Alg,
  \]
where the first functor is strictification, the second is the original functor, and the final functor is the embedding of prefactorization algebras in homotopy prefactorization algebras.
We believe that it is, however, unlikely that one would find the relations for $A_\infty$-algebras ``living inside'' the relations holding for $\hoDisj_\RR$-algebras.
To some extent, this is unsurprising, since our methods are meant to treat general prefactorization algebras, and not just locally constant ones.
We leave it as an open question to determine what sort of category of homotopy associative algebras forms the natural domain of a functor like the one above.

\begin{proof}
  We have already remarked upon a few parts of the proof.
  Define
  \begin{align*}
    \Phi(\bin) & \coloneqq m_U^V, & \Phi(\ext) & \coloneqq m_{U,V}^{U\sqcup V}.
  \end{align*}
  By the universal property of the free operad, these equations suffice to define a map $\cT(E)\to \Disj$, which we also denote using the letter~$\Phi$.
  It is immediate that $\Phi$ vanishes on $R$ and thus descends to a map $\cT(E,R)\to \Disj$.
  Equation \eqref{eq: generators} shows that $\Phi$ is arity-wise surjective.

  To show that $\Phi$ is arity-wise injective, consider an operation $\mu$ of color
  \begin{equation*}
    \binom{V}{U_1,\ldots, U_k}
  \end{equation*}
  in $\cT(E)$.
  The operation $\mu$ can be represented (non-uniquely) by a planar rooted tree with bivalent and trivalent vertices, together with a bijection from the set of leaves to~$\{1,\ldots, k\}$.
  Each edge, including those incident on the leaves and root, has a label from the poset~$\PCosh$.
  Furthermore, if the incoming edges of a trivalent vertex have colors $U$ and $V$, then the output has to have color $U\sqcup V$.
  We will show that, modulo $(R)$, $\mu$ is of the form appearing in the right-hand-side of Equation \eqref{eq: generators}.
  Let us observe first that any $\ext$ operation which is precomposed with a $\bin$ operation can be moved to the output of the $\bin$ operation using $R_3$ relations.
  Hence, modulo $(R)$, $\mu$ is equivalent to a rooted planar binary tree with a ``ladder'' attached to the root.
  Necessarily, the first input color on the ladder is $U_1\sqcup\cdots \sqcup U_k$, and the final input color on the ladder is $V$.
  Next, we use $R_1$ to replace this ladder of $\ext$ operations by $\ext[U_1\sqcup\cdots\sqcup U_k][V]$.
  Finally, we use the associativity relation in $R_2$ to turn the tree into a ``left comb.''
  When read from left to right, the labels on the leaves may not be in order.
  Here again, we may use $R_2$.
  Figure \ref{fig: rewriting} shows how this works in an example.

  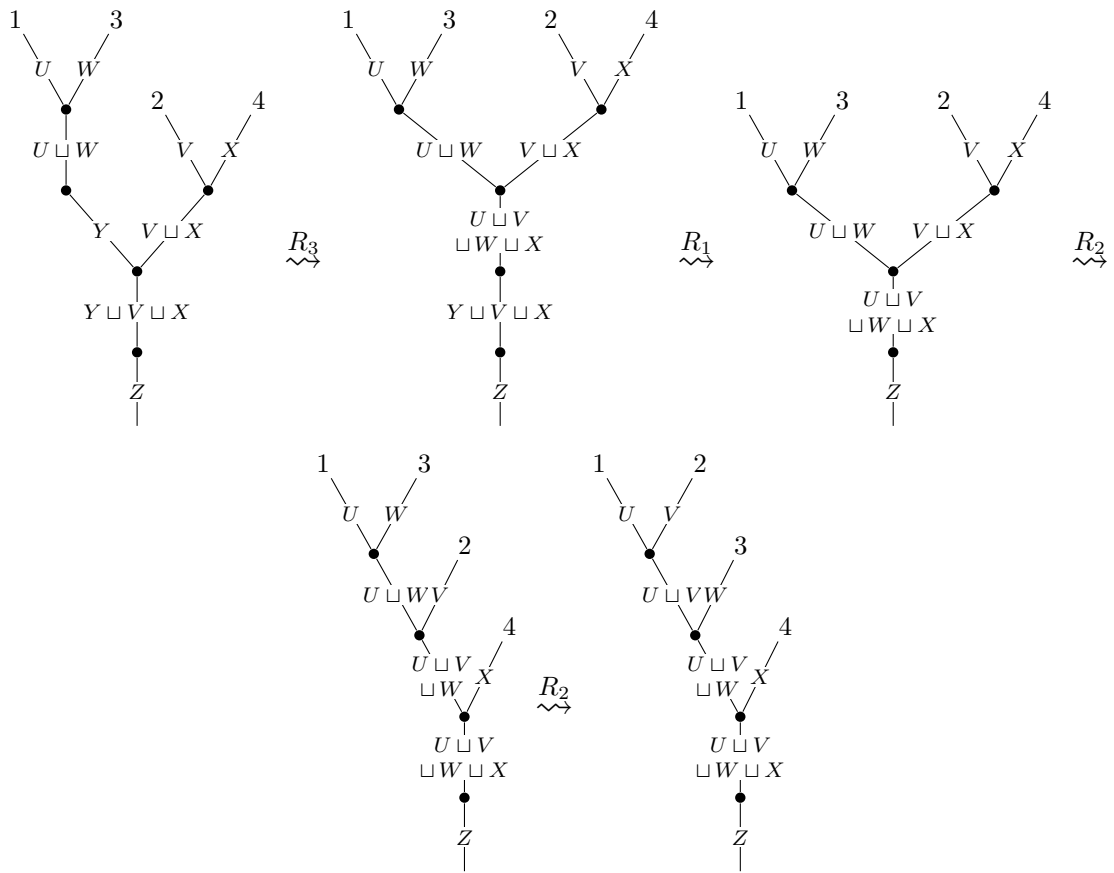
\begin{figure}[htbp]
    \centering
    \begin{forest}{operad}
      [[,lb={$Z$}
              [,baseline,lb={$Y \sqcup V \sqcup X$}
                  [,lb={$Y$}
                      [,lb={$U \sqcup W$}
                          [1, lb={$U$}]
                          [3, lb={$W$}]
                      ]
                  ]
                  [,lb={$V \sqcup X$}
                      [2, lb={$V$}]
                      [4, lb={$X$}]
                  ]
              ]
          ]]
    \end{forest}
    {\Large$\overset{R_3}{\leadsto}$}
    \begin{forest}{operad}
      [[,lb={$Z$}
              [,baseline,lb={$Y \sqcup V \sqcup X$}
                  [,lb={$U \sqcup V$\\$\sqcup \, W \sqcup X$}
                      [,lb={$U \sqcup W$}
                          [1, lb={$U$}]
                          [3, lb={$W$}]
                      ]
                      [,lb={$V \sqcup X$}
                          [2, lb={$V$}]
                          [4, lb={$X$}]
                      ]
                  ]
              ]
          ]]
    \end{forest}
    {\Large$\overset{R_1}{\leadsto}$}
    \begin{forest}{operad}
      [[,lb={$Z$}
              [,baseline,lb={$U \sqcup V$\\$\sqcup \, W \sqcup X$}
                  [,lb={$U \sqcup W$}
                      [1, lb={$U$}]
                      [3, lb={$W$}]
                  ]
                  [,lb={$V \sqcup X$}
                      [2, lb={$V$}]
                      [4, lb={$X$}]
                  ]
              ]
          ]]
    \end{forest}
    {\Large$\overset{R_2}{\leadsto}$}
    \begin{forest}{operad}
      [[,lb={$Z$}
              [,baseline,lb={$U \sqcup V$\\$\sqcup \, W \sqcup X$}
                  [,lb={$U \sqcup V$ \\ $\sqcup \, W$}
                      [,lb={$U \sqcup W$}
                          [1, lb={$U$}]
                          [3, lb={$W$}]
                      ]
                      [2, lb={$V$}]
                  ]
                  [4, lb={$X$}]
              ]
          ]]
    \end{forest}
    {\Large$\overset{R_2}{\leadsto}$}
    \begin{forest}{operad}
      [[,lb={$Z$}
              [,baseline,lb={$U \sqcup V$\\$\sqcup \, W \sqcup X$}
                  [,lb={$U \sqcup V$ \\ $\sqcup \, W$}
                      [,lb={$U \sqcup V$}
                          [1, lb={$U$}]
                          [2, lb={$V$}]
                      ]
                      [3, lb={$W$}]
                  ]
                  [4, lb={$X$}]
              ]
          ]]
    \end{forest}
    \caption{Applying the relations to reduce a tree to a standard form.}\label{fig: rewriting}
  \end{figure}

  In other words, in arity
  \begin{equation*}
    \binom{V}{U_1,\ldots, U_k},
  \end{equation*}
  the operad $\cT(E,R)$ is a vector space of dimension at most 1, and $\Phi$ is a surjective map from this space to a line.
  Hence, $\cT(E,R)$ must be a line in this arity, and $\Phi$ must be an isomorphism.
\end{proof}

Proposition~\ref{prop: gensrels} gives a quadratic-linear, generators-and-relations presentation of the operad $\Disj$.
In light of the proposition, we will cease to make a distinction between $\cT(E,R)$ and $\Disj$.
We would like to apply Koszul duality theory to the operad $\Disj$.
To do this, we need to check first that the colored operad $\Disj$ satisfies the properties \ref{ql1} and \ref{ql2} detailed in \autocite[Section~7.8]{LodayVallette2012}.
These conditions pertain to the minimality of the set of generators and the maximality of the relations; we must check them because our presentation is quadratic-linear.
Next, we need to check that the operad $\Disj$ is Koszul.
We undertake the first task here, and the second task below.

\begin{lemma}
  The operad $\Disj = \cT(E,R)$ satisfies the conditions for a quadratic-linear operad from~\autocite[Section~7.8]{LodayVallette2012}:
  \begin{enumerate}[label={(ql\arabic{*})}]
    \item\label{ql1} $R\cap E= \{0\}$;
    \item\label{ql2} $\{R\circ_{(1)} E + E\circ_{(1)} R\} \cap \cT^{(2)}(E)\subseteq R\cap \cT^{(2)}(E)$.
  \end{enumerate}
\end{lemma}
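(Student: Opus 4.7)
The approach is to verify each condition directly from the explicit description of $R = R_1 \oplus R_2 \oplus R_3$.

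For \ref{ql1}, my plan is to observe that the quadratic-part projection $R \to \cT^{(2)}(E)$ is already injective. By construction, the generators of $R_2$ and $R_3$ are purely quadratic (they belong to $\cT^{(2)}(E)$), while each generator of $R_1$ has quadratic part $\ext[V][W] \circ \ext$ for a strict chain $U \subsetneq V \subsetneq W$. These quadratic parts live in three pairwise disjoint subspaces of $\cT^{(2)}(E)$, distinguished by the types of vertices in the underlying shuffle trees (unary-unary for $R_1$; binary-binary for $R_2$; mixed for $R_3$), and within each subspace the generators are manifestly linearly independent (indexed by distinct tuples of open sets). Hence $R \cap E = \ker\bigl(R \to \cT^{(2)}(E)\bigr) = \{0\}$.

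For \ref{ql2}, the key observation is that because $R_2, R_3 \subseteq \cT^{(2)}(E)$ are purely quadratic, the compositions $R_2 \circ_{(1)} E$, $E \circ_{(1)} R_2$, $R_3 \circ_{(1)} E$, $E \circ_{(1)} R_3$ all live in $\cT^{(3)}(E)$ and contribute nothing to the weight-two component. Consequently, the weight-two part of any element of $R \circ_{(1)} E + E \circ_{(1)} R$ arises only from the linear part $-\ext$ of an $R_1$-relation, either by substituting a generator $e \in E$ into its unique input (yielding terms $-\ext \circ_{(1)} e$) or by precomposing it with a generator at position~$1$ (yielding $-e \circ_{(1)} \ext$). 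I would then take an arbitrary element $x \in (R \circ_{(1)} E + E \circ_{(1)} R) \cap \cT^{(2)}(E)$, expand it as a linear combination of the above forms plus purely cubic $R_2$- and $R_3$-contributions, organize the resulting weight-three monomials by tree shape and by the profile of unary versus binary vertices, and check that each forced cancellation of cubics produces a weight-two residue that already lies in $R$. The purely unary case amounts to the confluence of the chain $\ext[W][X] \circ \ext[V][W] \circ \ext$ under $R_1$-rewriting (both top-first and bottom-first reductions yield $\ext[U][X]$), so the residue lands in $R_1$. The mixed case, in which a term of the form $r \circ_1 \bin$ or $\bin \circ_1 r$ with $r \in R_1$ appears, forces cubic cancellation against an $R_3$-contribution; the surviving weight-two residue is then a linear combination of generators of $R_3$, as required.

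The main obstacle is purely combinatorial: one has to enumerate the possible tree shapes of weight-three monomials produced by infinitesimal composition with an $R_1$-relation, and verify case-by-case that each confluence computation lands in $R$. This is, however, exactly the same rewriting analysis that is used (in weight two) in the proof of Proposition~\ref{prop: gensrels}; see Figure~\ref{fig: rewriting}. The relations in $R_3$ were designed precisely to make the $\ext$--$\bin$ rewriting confluent, and that confluence propagates from weight two to weight three without any genuinely new computation, so no new relations are produced beyond those already in $R$.
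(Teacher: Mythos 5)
Your overall strategy coincides with the paper's: both arguments proceed by sorting the cubic monomials of $R\circ_{(1)}E+E\circ_{(1)}R$ according to the valences of their vertices, observing that the purely quadratic blocks $R_2,R_3$ contribute no weight-two residue, and reducing (ql2) to a cancellation analysis for the terms carrying the linear parts of $R_1$. Your treatment of (ql1) and of the ``three unary vertices'' case is correct and matches the paper.

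However, in the decisive case --- cubic terms with exactly two unary vertices and one binary vertex --- your proposal asserts the conclusion rather than proving it. The claim that ``the surviving weight-two residue is a linear combination of generators of $R_3$'' is exactly the statement that must be verified, and it is not automatic: for a fixed triple of colors, the three quadratic monomials $\ext[U\sqcup V][W]\circ\bin$, $\bin[U''][V]\circ_1\ext[U][U'']$, and $\bin[U][V'']\circ_2\ext[V][V'']$ span a three-dimensional space of which $R_3$ is only a two-dimensional subspace (the combinations whose coefficients sum to zero). Whether the residue lands in that hyperplane depends on the precise linear relations among the coefficients forced by the vanishing of the cubic part. The paper carries this out explicitly: it enumerates nine families of elements of $R_1\circ_{(1)}\RR\{\bin\}$, $\RR\{\bin\}\circ_{(1)}R_1$, $R_3\circ_{(1)}\RR\{\ext\}$, and $\RR\{\ext\}\circ_{(1)}R_3$ producing such cubics, extracts six linear equations on their coefficients $c_1,\dots,c_9$ (one per planar tree shape), computes a basis of the three-dimensional solution space, and only then checks that each resulting residue has coefficients summing to zero. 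Your appeal to ``confluence propagating from weight two to weight three'' does not substitute for this: the rewriting analysis in Proposition~\ref{prop: gensrels} shows that every tree reduces to a normal form modulo $(R)$, which concerns the quotient operad, whereas (ql2) is a maximality condition on the subspace $R$ itself and can fail even when the quotient is the intended operad. The gap is therefore the absence of the actual linear-algebra computation (or an equivalent structural argument) establishing the constraint $c_1+c_2+c_3=0$ on the residue.
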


\begin{remark}
  The condition \ref{ql1} is a minimality condition for the generators: if the relations set some generators to zero, then those generators should be excluded to satisfy \ref{ql1}.
  The condition \ref{ql2} is a maximality condition for the set of relations.
  Indeed, $R\circ_{(1)}E+E\circ_{(1)}R$ consists of (cubic-quadratic) relations which hold in $\cT(E,R)$ as a consequence of the relations $R$.
  Since $R$ has quadratic-linear relations, some of these relations could be purely quadratic as cubic terms could cancel.
  However, the condition \ref{ql2} ensures that these quadratic relations are already in $R$.
\end{remark}

\begin{proof}
  The property \ref{ql1} is immediate.

  For \ref{ql2}, let us describe the plan of attack.
  We have described a basis for $E$ and a basis for $R$.
  This allows one to give a relatively straightforward description of the set
  \begin{equation*}
    S\coloneqq \{E\circ_{(1)}R+R\circ_{(1)}E\}.
  \end{equation*}
  In general, this is a subset of $\cT^{(3)}(E)\oplus \cT^{(2)}(E)$.
  The intersection of $S\cap\cT^{(2)}(E)$ can be found by identifying all cubic terms arising in $S$ and then studying how to cancel them with each other.
  After obtaining a cancellation in this way, we note any quadratic terms which remain and check whether or not these belong to $R$.

  We can group the possible cubic terms by the nature of their vertices:

  \begin{description}
    \item[Three extension maps $\ext$]
      The only possibility for such terms is by composing $R_1$ with a $\ext$ operation. In this way, we obtain---for any sequence $U\subseteq V\subseteq W\subseteq X$---the relations
      \begin{equation*}
        \ext[W][X]\circ \ext[V][W]\circ \ext[U][V]-\ext[V][X]\circ \ext[U][V],\quad \ext[W][X]\circ \ext[V][W]\circ \ext[U][V]-\ext[W][X]\circ \ext[U][W].
      \end{equation*}
      The only way to cancel the cubic terms above is to subtract the two terms; in this way, we obtain the relation
      \begin{equation*}
        \ext[V][X]\circ \ext[U][V]-\ext[W][X]\circ \ext[U][W]= \left(\ext[V][X]\circ \ext[U][V]-\ext[U][X]\right) - \left(\ext[W][X]\circ \ext[U][W]-\ext[U][X]\right);
      \end{equation*}
      this term manifestly lies in $R_1$.

    \item[Three binary operations $\bin$]
      These can only be obtained from composing $R_2$ with a binary generator.
      Since $R_2$ is purely quadratic, such terms all lie in $\cT^{(3)}(E)$, with no terms in $\cT^{(2)}(E)$.
      Hence,
      \begin{equation*}
        \{R_2\circ_{(1)}\RR\{\bin\}+\RR\{\bin\}\circ_{(1)} R_2\}\cap \cT^{(2)}(E)=0.
      \end{equation*}

    \item[Two binary operations $\bin$]
      These can be generated by a relation in $R_3$ composed with a binary generator, or a relation in $R_2$ composed with a unary generator. Since $R_2$ and $R_3$ are purely quadratic, this case is dealt with as in the case immediately preceding.

    \item[Two unary operations $\ext$]
      This is the only case that presents difficulties. Such terms are created by composing an $R_1$ relation with a binary generator or an $R_3$ relation with a unary generator.
      Let us give a full accounting of the composites that may appear in this way, fixing the two input sets $U,V$ and the output set $W$.
      We obtain the following spanning set for the subspace $S'$ of $S$ whose cubic terms involve exactly two unary operations:
      \begin{align*}
        \ext[W'][W]\circ\ext[U\sqcup V][W']\circ\bin             & - \ext[U\sqcup  V][W]\circ \bin
        \tag{$e_1$} \\
        \bin[U''][V]\circ_1 (\ext[U'][U'']\circ\ext[U][U'])      & - \bin[U''][V]\circ_1 \ext[U][U''],                                    & (W=U''\sqcup V)
        \tag{$e_2$} \\
        \bin[U][V'']\circ_2(\ext[V'][V'']\circ\ext[V][V'])       & -\bin[U][V'']\circ_2 \ext[V][V''],                                     & (W=U\sqcup V'')
        \tag{$e_3$} \\
        (\bin[U'][V']\circ_1 \ext[U][U'])\circ_2 \ext[V][V']     & - \ext[U\sqcup V'][U'\sqcup V'] \circ (\bin[U][V']\circ_2\ext[V][V']), & (W=U'\sqcup V')
        \tag{$e_4$} \\
        \bin[U''][V]\circ_1(\ext[U'][U'']\circ\ext[U][U'])       & -\ext[U'\sqcup V][U''\sqcup V]\circ (\bin[U'][V]\circ_1\ext[U][U']),   & (W=U''\sqcup V)
        \tag{$e_5$} \\
        \ext[U'\sqcup V][W]\circ (\bin[U'][V]\circ_1\ext[U][U']) & -\ext[U'\sqcup V][W]\circ\ext[U\sqcup V][U'\sqcup V]\circ\bin[U][V]
        \tag{$e_6$} \\
        (\bin[U'][V']\circ_1 \ext[U][U'])\circ_2 \ext[V][V']     & - \ext[U\sqcup V'][U'\sqcup V'] \circ (\bin[U'][V]\circ_1\ext[U][U']), & (W=U'\sqcup V')
        \tag{$e_7$} \\
        \bin[U][V'']\circ_2(\ext[V'][V'']\circ\ext[V][V'])       & -\ext[U\sqcup V'][U\sqcup V'']\circ (\bin[U][V']\circ_2\ext[V][V']),   & (W=U\sqcup V'')
        \tag{$e_8$} \\
        \ext[U\sqcup V'][W]\circ (\bin[U][V']\circ_2\ext[V][V']) & -\ext[U\sqcup V'][W]\circ\ext[U\sqcup V][U\sqcup V']\circ\bin[U][V]
        \tag{$e_9$}
      \end{align*}
      Here, we require the strict inclusions $U\subsetneq U'\subsetneq U''$, $V\subsetneq V'\subsetneq V''$, and $W'\subsetneq W$, and a set of generators for $S'$ is obtained by letting $U', V',\ldots$ vary over all such.
      Some of the generators only appear if $W$ is of a specific form, as noted in the enumeration.
      We will see that, without loss of generality in the present proof, we will be able to assume that $W$ satisfies all conditions simultaneously.
      Consider a general element $e\in S'$. We write
      \begin{equation*}
        e=\sum_{i=1}^9 c_i e_i,
      \end{equation*}
      where $c_2=0$ unless $W=U''\sqcup V$, and similarly for $c_3$, $c_4, c_5, c_7,$ and $c_8$.
      Suppose further that $e\in S'\cap \cT^{(2)}(E)$.
      This imposes equations on the $c_i$ by setting to zero the coefficients of any given cubic composition of generators appearing in the $e_i$.
      Furthermore, we may, without loss of generality, make two ``fine-tuning'' assumptions: first, that $W$ is of the necessary form as specified in the definition of each $e_i$ (e.g. $W=U''\sqcup V$) and second, that $W'$ is both of the form $U\sqcup V'$ and of the form $U'\sqcup V$.
      Indeed, to pass to the ``non-fine-tuned'' situation one replaces a single equation of the form
      \begin{equation*}
        f_1(c_1,\ldots, c_9)+f_2(c_1,\ldots, c_9)=0,
      \end{equation*}
      with the set of two equations
      \begin{equation*}
        f_1(c_1,\ldots, c_9)=0, \quad f_2(c_1,\ldots, c_9)=0
      \end{equation*}
      (and repeats this process if necessary).
      In other words, we find that the ``non-fine-tuned'' solutions are a subspace of the ``fine-tuned'' solutions.
      Proceeding with the fine-tuned case, we obtain the six equations
      \begin{align*}
        c_1-c_6-c_9  & =0 \\
        c_2+c_5      & =0 \\
        c_3+c_8      & =0 \\
        c_4+c_7      & =0 \\
        -c_5+c_6-c_7 & =0 \\
        -c_4-c_8+c_9 & =0.
      \end{align*}
      Each of these equations represents one of the six planar trees with two unary vertices and one binary vertex, e.g., the first equation corresponds to the diagram in Figure \ref{fig: binunun}.
      \begin{figure}[b]
        \begin{subfigure}[b]{.49\linewidth}
          \centering
          \begin{forest}
            smalloperad
            [[
                [[][]]
              ]]
          \end{forest}
          \caption{$c_1-c_6-c_9=0$}\label{fig: binunun}
        \end{subfigure}
        \begin{subfigure}[b]{.49\linewidth}
          \centering
          \begin{forest}
            smalloperad
            [[
                [][[]]
              ]]
          \end{forest}
          \caption{$-c_4-c_8+c_9=0$}\label{fig: unbinun}
        \end{subfigure}
        \caption{The cubic trees corresponding to two of the equations for the $c_i$.}\label{fig: simplecubic}
      \end{figure}
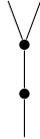
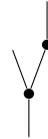
      Putting the coefficients $c_1,\ldots, c_9$ into a column vector, one obtains the following basis for the space of solutions to these equations:
      \begin{equation*}
        \left\{
        \begin{bmatrix}
          1 \\-1\\0\\0\\1\\1\\0\\0\\0
        \end{bmatrix},
        \begin{bmatrix}
          0 \\1\\-1\\-1\\-1\\0\\1\\1\\0
        \end{bmatrix},
        \begin{bmatrix}
          1 \\-1\\0\\1\\1\\0\\-1\\0\\1
        \end{bmatrix}
        \right\}.
      \end{equation*}
      One can verify directly that in all three cases, the corresponding sum $\sum_i c_i e_i$ is indeed quadratic in the generating operations.
      One therefore finds that the space $S'\cap \cT^{(2)}(E)$ is the space of all linear combinations
      \begin{equation}\label{eq: quadintersection}
        -c_1 (\ext[U\sqcup V][W]\circ \bin) -c_2 (\bin[U''][V]\circ_1 \ext[U][U'']) -c_3 (\bin[U][V'']\circ_2 \ext[V][V''] )
      \end{equation}
      where $c_1+c_2+c_3=0$.
      In the case that we have been considering, namely that all three summands in Equation \eqref{eq: quadintersection} have the same output color, one verifies directly the element in that equation belongs to $R_3$, using the equation $c_1+c_2+c_3=0$.
      As we have mentioned, we do not need to consider the more general case, for which the three operations do not have the same output color.
      \qedhere
  \end{description}
\end{proof}

\section{Proof of the Koszul property}\label{sec disj koszul}

In this section, we prove the Koszulity of the colored operad $\Disj$, and derive an explicit description of the Koszul dual $\Disj^\antishriek$.
This section can be omitted on a first reading, since its main function is to provide in Section~\ref{sec descr htpy prefac} an explicit description of $\hoDisj$ algebras.

Recall from item~\ref{it not operads} of Section~\ref{sec conventions} that we use the notation $q\PCosh$ to denote the operad obtained by projecting the relations of the quadratic-linear operad $\PCosh$ onto their quadratic parts.
Concretely, the operad $q\PCosh$ is generated by the unary operations $\ext$ and the binary operations $\bin$ subject to the same relations $R_2$ and $R_3$ (which are purely quadratic) and the quadratic version of $R_1$ given by $qR_1 = \bigoplus \RR \{ \ext[U][V]\circ \ext[V][W]\}$---that is, the composition of any two unary generators vanishes.

\begin{lemma}\label{lem: openkoszul}
  The quadratic colored operad $q\PCosh$ is Koszul.
\end{lemma}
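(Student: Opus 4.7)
The key observation is that the relations $qR_1$ span the entire weight-two part $\cT^{(2)}(E(1))$ of the free operad on the unary generators: every tree monomial $\ext[V][W]\circ\ext$ with $U\subsetneq V\subsetneq W$ occurs as a relation. Consequently, the augmentation ideal of $q\PCosh$ squares to zero, so as a colored $\SS$-module $q\PCosh$ is concentrated in weights $0$ and $1$: the component $q\PCosh\binom{V}{U}$ is one-dimensional when $U\subseteq V$ (spanned by $\id_V$ if $U=V$, by $\ext$ otherwise) and vanishes otherwise.

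The plan is first to identify the Koszul dual cooperad. Because $qR_1=\cT^{(2)}(E(1))$, the orthogonality condition defining $q\PCosh^\antishriek$ inside the cofree conilpotent cooperad on $E(1)\{1\}$ is vacuous. Hence $q\PCosh^\antishriek$ coincides with the full cofree conilpotent cooperad on $E(1)\{1\}$; its weight-$k$ piece at color $\binom{V}{U}$ admits a basis indexed by strict chains $U=U_0\subsetneq U_1\subsetneq\cdots\subsetneq U_k=V$, with deconcatenation as coproduct.

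Next I would verify that the Koszul complex $q\PCosh^\antishriek\circ_\kappa q\PCosh$ is acyclic in every color. Fixing colors $(V,U)$ with $U\subseteq V$, the complex has a basis indexed by strict chains in the cooperad factor paired with either an identity or a single $\ext$ in the operad tail; the Koszul differential peels off the bottom step of the chain into the tail. Because any further step would produce a composition of two generators in $q\PCosh$, which is killed by $qR_1$, the complex decomposes into elementary acyclic pieces, and a direct contracting homotopy---formally identical to the standard one on the bar complex of a trivial augmented algebra---gives acyclicity.

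I do not foresee any serious obstacle: this is the standard Koszulity argument for \emph{trivial} operads (those with square-zero augmentation ideal). A cleaner and more succinct alternative is to note that each relation in $qR_1$ is already a single tree monomial, so $qR_1$ trivially forms a quadratic Gröbner basis for any admissible order on tree monomials in $\cT(E(1))$---no S-polynomial ambiguities need to be resolved. The theorem that a quadratic operad with a quadratic Gröbner basis is Koszul (\autocite[Ch.~8]{LodayVallette2012}, which transfers verbatim to the $\PCosh$-colored setting) then applies and yields the lemma immediately.
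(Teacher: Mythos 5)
Your proof is correct and follows essentially the same route as the paper, which identifies $q\PCosh$ as a colored version of the dual numbers algebra (square-zero augmentation ideal), identifies its Koszul dual cooperad as the cofree colored cooperad on the degree-shifted unary generators, and invokes the standard acyclicity argument for the resulting Koszul complex. Your closing Gr\"obner-basis observation is a valid alternative shortcut, but the body of your argument coincides with the paper's.
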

\begin{proof}
  The colored operad $q\PCosh$ is a colored version of the dual numbers algebra. Its Koszul dual cooperad is the cofree colored cooperad on generators $\tau_{U}^V$ of degree $-1$ for each strict inclusion $U\subsetneq V$. The proof of the Koszulity of the dual numbers algebra applies equally well here.
\end{proof}

\begin{lemma}\label{lem: tenskoszul}
  The quadratic colored operad $\Tens$ is Koszul.
\end{lemma}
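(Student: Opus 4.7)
The plan is to establish Koszulity via the method of (shuffle) Gröbner bases, also known as the rewriting method, developed in Chapter~8 of \autocite{LodayVallette2012}. The operad $\Tens$ is a ``colored commutative'' operad, so we should expect a proof closely modeled on the standard Gröbner-basis proof that $\Com$ is Koszul.

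First, I would fix a well-ordering on the set $\PCosh$ of open subsets of $M$ and extend it to a path-lexicographic monomial order on shuffle trees built from the generators $\bin$. With respect to such an order, exactly one of the three weight-two shuffle trees $\taui{I}, \taui{II}, \taui{III}$ on a fixed triple $(U,V,W)$ of pairwise disjoint open sets---say the left comb with leaves in the order induced by the well-ordering---will be the smallest, and the relations $R_2$ become a rewriting system replacing the two larger trees by the canonical one. Iterating this rule, every operation should admit a unique normal form as a left-comb shuffle tree; in particular, the component $\Tens\binom{V}{U_1,\ldots,U_k}$ is at most one-dimensional, and is nonzero precisely when $V = U_1 \sqcup \cdots \sqcup U_k$.

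Second, I would verify the diamond lemma at weight three. The critical pairs are the shuffle trees with three binary vertices (four leaves), on which more than one rewriting step can be applied; we must show that all rewriting sequences converge to the same normal form. The combinatorial content is identical to the usual confluence argument for the associativity relation of $\Com$: concretely, the ``pentagon'' formed by iterated application of $\tau_{\mathrm{I}} = \tau_{\mathrm{II}} = \tau_{\mathrm{III}}$ on four inputs closes up, exactly as in Mac~Lane's coherence. The coloring contributes no additional conditions, since the output color of any subtree is simply the disjoint union of its input colors and is therefore invariant under every rewriting step; the disjointness constraints on leaves of subtrees are inherited automatically from the global disjointness.

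Once confluence is established, \autocite[Theorem~8.6.9]{LodayVallette2012} (Koszulity of an operad admitting a quadratic Gröbner basis) yields that $\Tens$ is Koszul. The main obstacle is the weight-three confluence verification, but, as just indicated, the coloring is entirely ``slaved'' to the combinatorics of the underlying planar tree, so this check reduces to the classical one for $\Com$ and requires no genuinely new computation.
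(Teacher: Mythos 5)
Your proof is correct, but it takes a genuinely different route from the paper's. You establish Koszulity via the rewriting method (a quadratic Gr\"obner/PBW basis for the shuffle-operad version of $\Tens$, with confluence checked on the weight-three critical trees), whereas the paper proves acyclicity of the Koszul complex $\Tens^{\antishriek}\circ_\kappa\Tens$ directly: it observes that each multi-colored component $\Tens\binom{V}{U_1,\ldots,U_k}$ is either zero or one-dimensional (nonzero exactly when $V=U_1\sqcup\cdots\sqcup U_k$), identifies $\Tens^{\antishriek}\binom{V}{U_1,\ldots,U_k}$ with $\Com^{\antishriek}(k)=\Lie^c\{1\}$ in the nonzero case, and concludes that the Koszul complex is, color component by color component, either zero or literally the Koszul complex of $\Com$ in arity $k$. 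Both arguments hinge on the same observation---that the coloring is entirely determined by the input colors, so everything reduces to $\Com$---but they invoke different Koszulity criteria. Your approach is arguably more self-contained on the Koszulity question itself (confluence for $\Com$ is an elementary finite check, and the colored extension of Gr\"obner bases is standard); the paper's approach has the advantage of computing the Koszul dual cooperad $\Tens^{\antishriek}$ explicitly along the way, which is needed later for the description of $\hoDisj$, whereas with your route that computation would still remain to be done separately. Two small points of care: the leading-term selection should be phrased in terms of the standard leaf labels $1,\ldots,k$ of shuffle trees (the auxiliary well-ordering on $\PCosh$ is not needed, since for fixed input colors the colored shuffle trees are in bijection with the uncolored ones), and the confluence check for $\Com$ involves the shuffle-tree overlaps of both rewriting rules $\taui{II}\rightsquigarrow\taui{I}$ and $\taui{III}\rightsquigarrow\taui{I}$, not only the associativity pentagon; but both are exactly the classical computations for $\Com$, as you say.
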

\begin{proof}
  We need to show that the Koszul complex $\Tens^{\antishriek}\circ_\kappa \Tens$ is acyclic.

  The operad $\Tens$ is generated by the binary operations $\bin$ subject to the relations $R_2$.
  For this reason, it resembles a colored version of the commutative operad.
  Our strategy is therefore to reduce the case under consideration to the proof of the Koszulity of the commutative operad.
  To this end, note that the space
  \begin{equation*}
    \Tens\binom{V}{U_1,\ldots, U_k}
  \end{equation*}
  is one-dimensional if the $U_i$ are pairwise disjoint and $V=U_1\sqcup \cdots \sqcup U_k$; the space is 0 otherwise.
  In other words, if one fixes the input colors of  an operation, the output color is determined by the inputs.
  When it is non-zero, the corresponding space of operations agrees with the one for the commutative operad.
  We will use variations of this basic observation repeatedly.

  Now, we  wish to understand the Koszul dual cooperad $\Tens^{\antishriek}$.
  Let us first consider the cofree cooperad $\cT^c(E(2)[1])$ on the binary generators $\bin$.
  The Koszul dual cooperad $\Tens^{\antishriek}$ is a sub-cooperad of $\cT^c(E(2)[1])$ which we will consider in a moment.
  Let us fix a collection $U_1,\ldots, U_k$ of input colors.
  A straightforward induction shows that
  \begin{equation*}
    \cT^c(E(2)[1])\binom{V}{U_1,\ldots, U_k}
  \end{equation*}
  is the zero vector space unless the $U_i$ are pairwise disjoint and $V=U_1\sqcup \cdots \sqcup U_k$.
  In the latter case, the vector space has a basis consisting of binary shuffle trees (the input colors determine the operations at the vertices and the colors on the edges and root of the tree).
  In other words,
  \begin{equation*}
    \cT^c(E(2)[1])\binom{V}{U_1,\ldots, U_k} = \cT^c(F[1])(k),
  \end{equation*}
  where $F$ is the trivial $\SS_2$-module concentrated in arity 2.
  To lie in $\Tens^{\antishriek}$, a linear combination of trees needs to have relations from $R_2$ on all pairs of adjacent vertices.
  Fixing pairwise disjoint input colors $U,V,W$, the space of relators
  \begin{equation*}
    R_2\binom{U\sqcup V\sqcup W}{U,V,W}
  \end{equation*}
  is naturally identified with the space of relations defining the single-colored operad governing commutative algebras.
  Hence, we obtain
  \begin{equation*}
    \Tens^{\antishriek}\binom{V}{U_1,\ldots, U_k} =
    \begin{cases}
      \Com^{\antishriek}(k)= \Lie^c\{1\}, & V=U_1\sqcup \cdots \sqcup U_k; \\
      0                                   & \text{else}.
    \end{cases}
  \end{equation*}
  Moreover, given $U_{11},\ldots, U_{1p_1},\ldots, U_{kp_k}$ pairwise disjoint, set $W_i= U_{i1}\sqcup\cdots \sqcup U_{ip_i}$ and $V=W_1\sqcup\cdots \sqcup W_k$.
  The only non-zero cocomposition map
  \begin{align*}
    \Tens^{\antishriek} \binom{V}{U_{11},U_{12},\ldots , U_{kp_k}}\to \Tens^{\antishriek}\binom{V}{W_1,\ldots, W_k} \otimes \bigotimes_{i=1}^k\Tens^{\antishriek}\binom{W_i}{U_{i1},\ldots, U_{ip_i}}
  \end{align*}
  is given by the cocomposition in $\Lie^c\{1\}$.

  Finally, let us consider the Koszul complex $\Tens^{\antishriek} \circ_\kappa \Tens$.
  Let us fix a set of input colors $U_1,\ldots, U_k$.
  Unless the $U_i$ are pairwise disjoint and $V=U_1\sqcup \cdots \sqcup U_k$, we have
  \begin{equation*}
    \left( \Tens^{\antishriek} \circ_\kappa \Tens\right) \binom{V}{U_1,\ldots, U_k}=0.
  \end{equation*}
  In the case that the Koszul complex is non-zero, it is precisely the Koszul complex for $\Com$ in arity $k$, hence is acyclic.
  %
\end{proof}

\begin{lemma}\label{lem: compprod}
  Consider the quadratic operad $\cT(E,qR)$ associated to the quadratic-linear operad $\Disj\cong \cT(E,R)$. The natural map
  \begin{equation*}
    \Psi:q\PCosh\circ \Tens \to \cT(E,qR)
  \end{equation*}
  of colored $\mathbb S$-modules is an isomorphism.
\end{lemma}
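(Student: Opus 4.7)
The plan is to show that both sides are at most one-dimensional in each arity with matching support, that $\Psi$ is arity-wise surjective, and then to obtain a matching lower bound for $\cT(E,qR)$ via a filtered/graded comparison with $\Disj$.

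The map $\Psi$ sends $\alpha \otimes \tau$ with $\alpha \in q\PCosh\binom{V}{W}$ and $\tau \in \Tens\binom{W}{U_1,\ldots,U_k}$ to the class of the composite $\alpha \circ \tau$ in $\cT(E,qR)$; this is well-defined since $qR_1$ and $R_2$ are both sub-$\SS$-modules of $qR$. From the explicit descriptions of $q\PCosh$ and $\Tens$ (Lemmas~\ref{lem: openkoszul} and~\ref{lem: tenskoszul} and the discussion preceding them), the product $(q\PCosh \circ \Tens)\binom{V}{U_1,\ldots,U_k}$ is one-dimensional exactly when the $U_i$ are pairwise disjoint with $U_1 \sqcup \cdots \sqcup U_k \subseteq V$, and vanishes otherwise. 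For the other side, I would adapt the rewriting argument of Proposition~\ref{prop: gensrels}: first use $R_3$ to push every unary generator above all binary vertices, then use $R_2$ to normalize the binary subtree into its canonical shuffle form. The only change compared with Proposition~\ref{prop: gensrels} is that $qR_1$ makes any ladder of two or more unary generators \emph{vanish} rather than collapse into a single unary. This shows $\cT(E,qR)\binom{V}{U_1,\ldots,U_k}$ is spanned by a single normal form of the shape $\iota \circ \tau$ (with $\iota$ either the identity or a single unary generator), which is precisely the image under $\Psi$ of a basis element on the left-hand side. Hence $\Psi$ is arity-wise surjective and $\dim \cT(E,qR)\binom{V}{U_1,\ldots,U_k} \le 1$ with matching support.

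To promote surjectivity to bijectivity, I would use the natural surjection $\cT(E,qR) = q\Disj \twoheadrightarrow \mathrm{gr}\,\Disj$ of $\SS$-modules, where $\mathrm{gr}\,\Disj$ denotes the associated graded with respect to the weight filtration on $\Disj$. This map is well-defined because for any $r \in qR$ there exists $r_1 \in E$ with $r + r_1 \in R$, so in $\Disj$ the composite $r$ coincides with $-r_1 \in E \subset F^1\,\Disj$, whose class in $\mathrm{gr}^2\,\Disj$ is zero; surjectivity is immediate as both sides are generated by $E$. Since $\dim \mathrm{gr}\,\Disj = \dim \Disj = 1$ in the relevant arities (Definition~\ref{def disj}), we obtain the reverse inequality $\dim \cT(E,qR) \ge 1$ there, matching the upper bound. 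A surjection $\Psi$ between matching one-dimensional (or zero) spaces is then an isomorphism.

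The main obstacle is the rewriting step in $\cT(E,qR)$: one must check that the normalization terminates in the expected normal form despite the switch from quadratic-linear to purely quadratic relations. This is handled by ordering the rewrites so that all $R_3$-rewrites are carried out first, after which unary generators appear only at the top of the tree and $qR_1$ either collapses or annihilates any remaining ladder without touching the binary part.
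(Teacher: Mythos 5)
Your argument is correct and is essentially the paper's proof in a different packaging. Both halves line up: surjectivity of $\Psi$ comes from the $R_3$ rewriting rule plus the normal-form count (the paper phrases this as $\cT(E,qR)=q\PCosh\vee_\lambda\Tens$), and the reverse inequality comes from a surjection of $\cT(E,qR)$ onto something of the same arity-wise dimension as $\Disj$. For that second step the paper constructs by hand a colored operad $q\Disj$ with the same underlying $\SS$-module as $\Disj$ but with all compositions killed except those in which one of the two factors is ``tight'' (its output equals the disjoint union of its inputs), and checks $\cT(E,qR)\cong q\Disj$ by rerunning the argument of Proposition~\ref{prop: gensrels}; your target $\mathrm{gr}\,\Disj$, the associated graded of the weight filtration, is exactly this operad, since a composite of two generators $m^V_{U_\bullet}$ drops in weight---hence dies in the associated graded---precisely when neither factor is tight. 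Your route, via the general quadratic-linear fact that every $r\in qR$ equals an element of weight $\le 1$ in $\Disj$ and therefore vanishes in $\mathrm{gr}^2\Disj$, makes well-definedness of the comparison map immediate from the general theory, at the mild cost of noting that the weight filtration is exhaustive and that each arity component of $\Disj$ is at most one-dimensional, so that $\dim\mathrm{gr}\,\Disj=\dim\Disj$. Two cosmetic remarks: your normalized unary generators sit at the output/root end of the trees, which is what the decomposition $q\PCosh\circ\Tens$ requires---the paper draws roots at the bottom, so ``top of the tree'' is the opposite of its convention, but the mathematics is the same; and it is worth saying explicitly that $R_3$-rewrites preserve the number of unary vertices, which is why a tree containing two or more unary generators is annihilated (rather than merely shortened) by $qR_1$ after normalization.
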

\begin{proof}
  Note the following: the colored operad $\cT(E,qR)$ is obtained via a rewriting rule
  \begin{equation*}
    \lambda: \Tens\circ_{(1)}q\PCosh \to q\PCosh\circ_{(1)}\Tens.
  \end{equation*}
  Indeed, the relation $R_3$ tells us precisely how to turn any composite in $\Tens\circ_{(1)}q\PCosh$ into a composite in $q\PCosh\circ_{(1)}\Tens$.
  Hence, we may write
  \begin{equation*}
    \cT(E,qR) = q\PCosh\vee_\lambda \Tens,
  \end{equation*}
  and the rewriting rule allows us to conclude that the map $\Psi$ is an arity-wise surjection.

  Let $q\Disj$ denote the colored operad whose underlying colored $\SS$-module is the same as $\Disj$, but such that we may only compose $\mu_{U_1,\ldots, U_k}^V$ with $\mu_{V_1,\ldots, V_{k'}}^{W}$ non-trivially, where $V = V_i$, if either $V=U_1\sqcup\cdots \sqcup U_k$ or $W=V_1\sqcup \cdots V_{i-1}\sqcup \cdots \sqcup V_{k'}$.
  It is manifest that, as a colored $\SS$-module,
  \begin{equation*}
    q\Disj\cong q\PCosh\circ \Tens.
  \end{equation*}
  Moreover, there is a natural map of operads
  \begin{equation*}
    \Psi':\cT(E,qR)\to q\Disj
  \end{equation*}
  constructed exactly as in the proof of Proposition~\ref{prop: gensrels}, and this map is an isomorphism for the same reasons as in the proof there.
  The composite map $\Psi'\circ \Psi$ is readily seen to be an isomorphism of $\SS$-modules; hence, $\Psi$ is a monomorphism, which completes the proof.
\end{proof}

\begin{theorem}\label{thm: disjkoszul}
  The operad $\Disj$ is Koszul.
\end{theorem}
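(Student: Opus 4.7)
My plan is to combine the results already established in the paper with two general Koszulity transfer theorems from \autocite{LodayVallette2012}: the Koszulity criterion for operads built from a distributive law, and the inhomogeneous Koszul criterion for quadratic-linear operads. The proof will proceed in two steps: first show the underlying quadratic operad $q\Disj = \cT(E,qR)$ is Koszul, then lift to the quadratic-linear operad $\Disj$.

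For the first step, I would invoke the Diamond Lemma for distributive laws (Theorem 8.6.5 of \autocite{LodayVallette2012}). The relations $R_3$ provide exactly a rewriting rule $\lambda: \Tens \circ_{(1)} q\PCosh \to q\PCosh \circ_{(1)} \Tens$ (modulo $R_2$ and $qR_1$), so that $q\Disj$ is the operad $q\PCosh \vee_\lambda \Tens$ obtained from this rewriting rule. The theorem says that if both factors are Koszul and the canonical surjection
\[
  q\PCosh \circ \Tens \twoheadrightarrow q\PCosh \vee_\lambda \Tens
\]
is in fact an isomorphism (the ``diamond condition''), then the combined operad is also Koszul. The Koszulity of the factors is Lemma~\ref{lem: openkoszul} and Lemma~\ref{lem: tenskoszul}, and the diamond condition is exactly Lemma~\ref{lem: compprod}, which asserts that $\Psi: q\PCosh \circ \Tens \to \cT(E,qR)$ is an isomorphism. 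So the three hypotheses of the Diamond Lemma for distributive laws are in hand, and we conclude that $q\Disj$ is Koszul.

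For the second step, I would apply the inhomogeneous Koszul duality theorem (Theorem~7.8.9 of \autocite{LodayVallette2012}), whose hypotheses are precisely the conditions (ql1), (ql2), and the Koszulity of the underlying quadratic operad. Conditions (ql1) and (ql2) are exactly the lemma immediately preceding Section~\ref{sec disj koszul}, and the Koszulity of $q\Disj$ comes from the previous paragraph. Hence $\Disj$ itself is Koszul.

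The only step requiring genuine work beyond citation is the invocation of the Diamond Lemma, because one should verify that the rewriting rule $\lambda$ is consistent in the sense required by \autocite[Section 8.6]{LodayVallette2012} (consistency at weight $3$, i.e., that the two paths of rewriting on the cubic shuffle trees where one binary and two unary, or two binary and one unary, generators meet give the same answer). However, this consistency has effectively been established already: the isomorphism of Lemma~\ref{lem: compprod} is equivalent to the statement that all weight-three compositions in $q\Disj$ admit a unique normal form in $q\PCosh \circ \Tens$, which is precisely the diamond condition in its concrete form. So while the verification of the distributive law's well-definedness is the main conceptual point, it has been packaged into Lemma~\ref{lem: compprod} and requires no further calculation.
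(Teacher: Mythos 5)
Your proposal is correct and follows essentially the same route as the paper: the Diamond Lemma applied to the decomposition $q\PCosh \circ \Tens \cong \cT(E,qR)$ of Lemma~\ref{lem: compprod}, with Koszulity of the factors supplied by Lemmas~\ref{lem: openkoszul} and~\ref{lem: tenskoszul}, and the quadratic-linear conditions \ref{ql1}--\ref{ql2} handling the passage from $q\Disj$ to $\Disj$. The only cosmetic difference is that you invoke the inhomogeneous Koszul duality theorem explicitly for the last step, whereas the paper simply notes that Koszulity of a quadratic-linear operad is by definition that of its quadratic analogue.
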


\begin{proof}
  Since $\Disj\cong \cT(E,R)$ is a quadratic-linear operad, Koszulity of $\Disj$ is defined to be Koszulity of the associated quadratic operad $\cT(E,qR)$.
  By Lemma~\ref{lem: compprod}, $\cT(E,qR)$ can be written as a composition product via a rewriting rule.
  By the Diamond Lemma,
  Koszulity of $\cT(E,qR)$ follows from the Koszulity of the factors, which is the content of Lemmas \ref{lem: openkoszul} and \ref{lem: tenskoszul}.
\end{proof}

At this point, we have provided a resolution $\hoDisj = \Omega \Disj^\antishriek$ of the colored operad $\Disj$, and this resolution is smaller than the bar-cobar resolution.

We can use the preceding arguments to describe the Koszul dual $\cT(E,R)^\antishriek$.
Before we do this, however, let us establish a bit of notation.
Note that, because we have applied the diamond lemma to the operad $q\cT(E,R)$, by \autocite[Theorem~B.3]{millescmha}, we may write $q\cT(E,R)^\antishriek \cong \Tens^\antishriek \circ q\PCosh^\antishriek$.
Given disjoint subsets $U_{1s_1},\ldots, U_{ks_k}$, we let
$\mu^\antishriek_{U_{1s_1},\ldots, U_{ks_k}}$
denote the image of the $k$-ary operation under the map
\begin{equation*}
  \As^\antishriek(k)\to\Com^\antishriek(k)\cong \Tens^\antishriek\binom{U_{1s_1}\sqcup\dots\sqcup U_{ks_k}}{U_{1s_1},\dots,U_{ks_k}}.
\end{equation*}
(Note that the while the $\mu^\antishriek_{U_1,\ldots, U_k}$ generate the spaces of $k$-ary cooperations as $\SS$-modules, there are relations between these operations.)
Let $\extantishriek$ denote the cogenerator in $\cT^c(sE,s^2R)=\cT(E,R)^\antishriek$ corresponding to $\ext$.
For each $1\leq i\leq k$, given a chain $\cU_i = (U_{i1}\subsetneq U_{i2}\subsetneq\cdots \subsetneq U_{is_i})$ of inclusions, we let:
\begin{equation*}
  \iota^\antishriek_{\cU_i}\coloneqq \extantishriek[U_{i(s_{i}-1)}][U_{is_i}]\cocirc\cdots\cocirc \extantishriek[U_{i1}][U_{i2}],
\end{equation*}
where the notation $\cocirc$ denotes the formal ``composite'' in the cooperad $(q\PCosh)^\antishriek$ (if $s_i=1$, we understand $\iota^\antishriek_{\cU_i}$ to mean the identity).
Finally, we use $\cU$ to denote the full (ordered) collection $(\cU_1,\ldots, \cU_k)$, and define
\begin{equation*}
  \mu^\antishriek_\cU = (\mu^\antishriek_{U_{1s_1},\ldots, U_{ks_k}};\iota^\antishriek_{\cU_1},\ldots, \iota^\antishriek_{\cU_k});
\end{equation*}
given a $(s_1-1,\ldots, s_k-1)$ shuffle $\sigma$, we let $\sigma\cdot \cU$ denote the unique chain of inclusions starting with $U_{11}\sqcup \cdots \sqcup U_{k1}$ and ending with $U_{1s_1}\sqcup \cdots\sqcup U_{ks_k}$ corresponding to $\sigma$ (see Remark~\ref{rem shufflecomposition} for an example).

\begin{lemma}
  The cooperadic (co)distributive law defining the cooperadic structure on
  \begin{equation*}
    (q\cT(E,R))^\antishriek \cong  \Tens^\antishriek\circ \PCosh^\antishriek
  \end{equation*}
  is given by the map
  \begin{align}
    \Lambda^c & :  \Tens^\antishriek \circ \PCosh^\antishriek \to \PCosh^\antishriek\circ \Tens^\antishriek\nonumber \\
    \Lambda^c & (\mu^\antishriek_\cU)=\sum_{\sigma\in \Sh(s_1-1,\ldots,s_k-1)}(-1)^{(k-1)\sum(s_i-1)}\sgn(\sigma)\iota^\antishriek_{\sigma\cdot\cU}\,\cocirc \,\mu^\antishriek_{U_{11},\ldots, U_{k1}}.\label{eq: codistr}
  \end{align}
  The differential in the Koszul dual is given by
  \begin{align}
    d\mu^\antishriek_\cU & = \sum_{i=1}^k (-1)^{(k-1)+(\sum_{j=1}^{i-1}(s_j-1))}(\mu^\antishriek_{U_{1s_1},\ldots, U_{ks_k}}; \iota^\antishriek_{\cU_1},\ldots ,d\iota^\antishriek_{\cU_i},\ldots, \iota^\antishriek_{\cU_k})
    \nonumber \\
                         & \coloneqq \sum_{i=1}^k (-1)^{(k-1)+(\sum_{j=1}^{i-1}(s_j-1))} \mu^\antishriek_{d_i\cU}
    \label{eq diff Koszul dual}
  \end{align}
  here,
  \begin{equation*}
    d\iota^\antishriek_{\cU_i} = \sum_{j=1}^{s_i-2}(-1)^{j-1}\iota^\antishriek_{\cU_i,\hat j},
  \end{equation*}
  where $\iota^\antishriek_{\cU_i,\hat j}$ is the result of replacing $\extantishriek[U_{i(j+1)}][U_{i(j+2)}]\cocirc\extantishriek[U_{ij}][U_{i(j+1)}]$ with $\extantishriek[U_{ij}][U_{i(j+2)}]$ in $\iota^\antishriek_{\cU_i}$.
\end{lemma}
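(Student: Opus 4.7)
The plan is to derive both formulas from the inhomogeneous Koszul duality framework of Section~7.8 of \autocite{LodayVallette2012}, combined with the distributive law structure established in Lemma~\ref{lem: compprod}.

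For the codistributive law $\Lambda^c$, I would first invoke \autocite[Theorem~B.3]{millescmha}: since $\cT(E,qR) \cong q\PCosh \vee_\lambda \Tens$ for the distributive law $\lambda$ coming from $R_3$, the cooperadic structure on the Koszul dual $\Tens^\antishriek \circ q\PCosh^\antishriek$ is encoded by a codistributive law $\Lambda^c$ that is determined, on the weight-two piece, by dualizing $\lambda$. Reading off $R_3$, the rule $\lambda$ sends $\bin \circ_1 \ext$ to $\ext \circ_1 \bin$ (with appropriately modified colors), and similarly in the second slot, so on weight-two elements $\Lambda^c$ sends $\bin^\antishriek \cocirc_i \extantishriek$ to $\extantishriek \cocirc \bin^\antishriek$ up to a Koszul sign. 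For higher weight, I would extend this by iterated cocomposition: the element $\mu^\antishriek_\cU$ is built in $\Tens^\antishriek \circ q\PCosh^\antishriek$ from the cocommutative cooperation $\mu^\antishriek_{U_{11},\ldots,U_{k1}}$ and the chains $\iota^\antishriek_{\cU_i}$, and pushing each $\extantishriek$ in each chain past $\mu^\antishriek$ one at a time produces all possible interleavings of the $\sum_i(s_i-1)$ unary cogenerators, i.e., all $(s_1-1,\ldots,s_k-1)$-shuffles $\sigma$. The sign $\sgn(\sigma)$ arises as the Koszul sign from permuting the degree $(-1)$ generators $\extantishriek$ past each other, while the prefactor $(-1)^{(k-1)\sum(s_i-1)}$ records moving the cocommutative cooperation $\mu^\antishriek_{U_{11},\ldots,U_{k1}}$ (of total cohomological degree $-(k-1)$) past the combined block of unary cogenerators.

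For the differential, the quadratic-linear theory of \autocite[Section~7.8]{LodayVallette2012} builds the differential on $\cT(E,R)^\antishriek$ from the linear part $\varphi : qR \to E$ of the relations. In our operad, only $R_1$ contributes a linear piece, namely the relation $\ext[V][W]\circ \ext[U][V] = \ext[U][W]$, which dualizes to the contraction map $\extantishriek[V][W]\cocirc\extantishriek[U][V] \mapsto \extantishriek[U][W]$. This gives exactly the stated cobar-like differential on $q\PCosh^\antishriek$: $d\iota^\antishriek_{\cU_i} = \sum_{j=1}^{s_i-2} (-1)^{j-1} \iota^\antishriek_{\cU_i,\hat j}$. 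Because the relations $R_2$ and $R_3$ are purely quadratic, they contribute no extra terms. Extending to $\mu^\antishriek_\cU$ via the Leibniz rule then produces the formula for $d\mu^\antishriek_\cU$, with the sign $(-1)^{(k-1) + \sum_{j<i}(s_j-1)}$ accounting for commuting the differential first past $\mu^\antishriek_{U_{1s_1},\ldots,U_{ks_k}}$ (degree $-(k-1)$) and then past the $i-1$ preceding chains whose total suspension degree is $\sum_{j<i}(s_j-1)$.

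The main obstacle will be the sign bookkeeping. Several independent sign conventions interact here: the Koszul signs from the symmetric monoidal category of graded vector spaces, the suspensions $\{1\}$ built into the definition of the Koszul dual cooperad $\cT^c(sE,s^2R)$, the cohomological rather than homological grading (per Convention~\ref{it not operads}), and the degrees $-(k-1)$ carried by the cocommutative cooperations $\mu^\antishriek$. My strategy would be to fix conventions upfront matching \autocite{LodayVallette2012} (with the cohomological flip), verify both formulas carefully in the smallest nontrivial cases ($k=2$, $s_1=s_2=2$ for $\Lambda^c$, and $k=1$, $s_1=3$ for $d$), then extend by induction on weight using the cooperadic structure. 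As a consistency check, one verifies that $d^2 = 0$, that $\Lambda^c$ is compatible with the cocomposition of $q\PCosh^\antishriek$ and $\Tens^\antishriek$, and that $d$ is a coderivation of the resulting cooperadic structure.
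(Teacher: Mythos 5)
Your proposal is correct and follows essentially the same route as the paper: the paper likewise derives the codistributive law from the rewriting rule of Lemma~\ref{lem: compprod} via the results of \autocite{millescmha}, computes the weight-two base case explicitly to pin down the sign $(-1)^{(k-1)\sum(s_i-1)}$, extends by a (double) induction on the numbers of unary and binary cogenerators, and obtains the differential from the linear part of $R_1$ with the Koszul sign rule as the only subtlety. The only cosmetic difference is that the paper outsources most of the inductive step to \autocite[Lemma~2.2]{millescmha} rather than redoing it.
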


\begin{remark}\label{rem shufflecomposition}
  To clarify the meaning of the symbol $\iota^\antishriek_{\sigma\cdot \cU}$, let us describe it in a simple example: let $k=2$, $s_1=3$, $s_2=2$, so that $\cU$ consists of a chain $U_{11}\subsetneq U_{12}\subsetneq U_{13}$ and a disjoint chain $U_{21}\subsetneq U_{22}$. Let $\sigma(1) = 1$, $\sigma(2)=3$, $\sigma(3)=2$ (that is, $\sigma\in \mathrm{Sh}(2,1)$). Then,
  \begin{equation*}
    \iota^\antishriek_{\sigma\cdot \cU} =
    \extantishriek[U_{12}\sqcup U_{22}][U_{13}\sqcup U_{22}]
    \cocirc\extantishriek[U_{12}\sqcup U_{21}][U_{12}\sqcup U_{22}]
    \cocirc\extantishriek[U_{11}\sqcup U_{21}][U_{12}\sqcup U_{21}].
    \qedhere
  \end{equation*}
\end{remark}

\begin{proof}
  The claim about the differential follows directly from the definitions.
  The most subtle thing in the proof thereof is the Koszul sign rule.

  We have already established the Koszulity of $q\Disj$ using the diamond lemma.
  It follows from \autocite[Proposition~B.2]{millescmha} that the Koszul dual $q\Disj^\antishriek$ can be described by a codistributive law, which is itself induced from the rewriting rule $\lambda$ used in the proof of Lemma \ref{lem: compprod}
  It remains to make explicit the consequences of this codistributive law.
  To this end, we make heavy reference to the proof of \autocite[Lemma~2.2]{millescmha}, where a similar proof is undertaken for an operad generated by:
  \begin{itemize}
    \item a binary operation of degree +1,
    \item a unary operation of degree 0
  \end{itemize}
  subject to the relations
  \begin{itemize}
    \item the binary operation satisfies the Jacobi relation
    \item the unary operation squares to 0
    \item the binary and unary operations satisfy the same rewriting rule as in $q\Disj$.
  \end{itemize}
  The present case differs from the case studied in \autocite{millescmha} in the following three ways:
  \begin{enumerate}
    \item\label{diff-bm-1} there are additional open subsets of $M$ labeling all operations
    \item\label{diff-bm-2} the binary operations form an analogue of the commutative operad rather than the Lie operad, and
    \item\label{diff-bm-3} the binary operations have degree 0 rather than degree +1.
  \end{enumerate}
  Item~\ref{diff-bm-1} presents only the challenge of notational complexity; item~\ref{diff-bm-2} is immaterial; and item~\ref{diff-bm-3} is responsible for the sign $(-1)^{(k-1)\sum(s_i-1)}$ appearing in Equation~\eqref{eq: codistr}.
  The proof of Equation~\eqref{eq: codistr} is, as in \autocite{millescmha}, by a double induction on the number of binary cogenerators and the number of unary cogenerators.
  Since the proof is similar to that case, we refer the reader thither for all the details; here, we will comment on the necessary modifications in the present case and on a few points which are omitted in the proof in \autocite{millescmha} but which we found enlightening to understand.
  The first difference lies in establishing the base cases for the induction.
  We need to establish the base case that $k=2$, $s_1+s_2=2$.
  This corresponds to tracing
  \begin{equation*}
    (\binantishriek;\extantishriek[U'][U],\id)
  \end{equation*}
  (and the composition in the other factor) through the codistributive law.
  The codistributive law is the composite
  \begin{equation*}
    \Tens^\antishriek  \circ (q\PCosh)^\antishriek \to \cT^c(E[1],R[2])\to (q\PCosh)^\antishriek \circ \Tens^\antishriek,
  \end{equation*}
  where the second map is the projection onto trees where all the binary vertices are above the unary ones, and the first map is the inverse of the analogous map for the binary vertices below the unary ones.
  The image of $(\binantishriek;\extantishriek[U'][U],\id)$ under the first map is
  \begin{equation*}
    (\binantishriek;\extantishriek[U'][U],\id)- (\extantishriek[U'\sqcup V][U\sqcup V];\binantishriek[U'][V]).
  \end{equation*}
  Hence,
  \begin{equation*}
    \Lambda^c(\binantishriek;\extantishriek[U'][U],\id) = -(\extantishriek[U'\sqcup V][U\sqcup V];\binantishriek[U'][V]),
  \end{equation*}
  which establishes the base case, and accounts for the sign $(-1)^{(k-1)\sum(s_i-1)}$ appearing in Equation \eqref{eq: codistr}

  The rest of the proof proceeds exactly as in \autocite{millescmha}.
  We make two elucidations thereof.
  The first is that the Koszul sign rules \autocite[Section~5.1.8]{LodayVallette2012} appearing in the associator for the composition product $\circ$ play an important role in the proof.
  In Bellier-Millès's version, these appear as the $\epsilon_{k_j',k''_j}$ signs.
  Those signs appear in our proof as well; in our case, there are further signs as a result of the fact that both cogenerators have odd degree.
  Second, Bellier-Millès omits the induction on the number of binary cogenerators.
  Relevant to that proof is the fact that given any shuffle $\sigma \in \Sh(j_1,\ldots, j_k)$ and any partition $k=\ell_1+\cdots +\ell_n$, $\sigma$ can be written uniquely in the form:
  \begin{equation*}
    \tau \cdot \sigma_1\cdot \cdots \cdot \sigma_k,
  \end{equation*}
  where $\sigma_i \in \Sh(j_{\ell_{i-1}+1},\ldots,j_{\ell_{i}})$ and $\tau\in \Sh(\sum_{i=1}^{\ell_1}j_i,\ldots, \sum_{i=\ell_{n-1}+1}^{\ell_n}j_i)$.
\end{proof}

Now that we have made explicit the codistributive law which defines the cooperad structure on $\Disj^\antishriek$, we can also make the cooperad structure itself more explicit.
That is the object of the following lemma.
Its statement is long because of the combinatorics of the trees involved.
However, we will provide an example that will illustrate this lemma graphically.
\begin{lemma}\label{lem: infdecomp}
  Let $\cU = (\cU_1, \dots, \cU_k)$ be as above.
  Under the isomorphism of $\SS$-modules
  \begin{equation*}
    \Tens^\antishriek \circ \PCosh^\antishriek \cong \Disj^\antishriek,
  \end{equation*}
  the infinitesimal decomposition map $\Delta^{(1)}_{\Lambda^c}$ is given by the equation
  \begin{equation}
    \Delta^{(1)}_{\Lambda^c}(\mu^\antishriek_{\cU}) = \sum_{(p,q,j,s'_i, s''_i, \sigma)} \sgn(\sigma)(-1)^{\epsilon(s'_i,s''_i,j,p,q)}\mu^\antishriek_{\cU'}\circ_j\mu^\antishriek_{\cU''},
  \end{equation}
  where the sum ranges over:
  \begin{itemize}
    \item positive indices $(p,q)$ such that $p+q=k+1$;
    \item indices $j$ from $1$ to $p$;
    \item indices $s'_i$, $s''_i$ satisfying $s_i - 1 = s'_i + s''_i - 2$, $s''_i = 1$ if either $i \leq j-1$ or $i > j+q-1$;
    \item shuffles $\sigma \in \Sh(s'_1-1,\ldots, s'_q-1)$;
  \end{itemize}
  and where we let:
  \begin{itemize}
    \item $\cU'' = (\cU''_1,\ldots, \cU''_q)$ with $\cU''_l = U_{(j+l-1)s_1} \subsetneq \cdots\subsetneq U_{(j+l-1)s''_l}$;
    \item $\cU'_1 = U_{11}\subsetneq \cdots\subsetneq U_{1s_1} , \ldots , \cU'_{j-1}=U_{(j-1)1}\subsetneq\cdots \subsetneq U_{(j-1)s_{j-1}}$;
    \item $\cU'_j = \sigma\cdot (U_{j(s''_j+1)}\subsetneq \cdots \subsetneq U_{js_j}, \ldots, U_{(j+q-1)(s''_{j+q-1}+1)}\subsetneq \cdots \subsetneq U_{(j+q-1)s_{j+q-1}})$;
    \item $\cU'_{j+1} = U_{(q+j)1}\subsetneq \cdots \subsetneq U_{(q+j)s_{q+j}},\quad \ldots, \quad \cU'_p= U_{k1}\subsetneq \cdots \subsetneq U_{ks_k}$;
    \item $\cU' =(\cU'_1,\ldots, \cU'_p)$;
    \item $\epsilon(s'_i,s''_i,j,p,q)=(q+1)(p-j)+(q-1)\Bigl(\sum_{i=1}^{k}(s'_i-1)\Bigr)+\sum_{i=1}^k \Bigl((s''_i-1)\sum_{\ell=i+1}^k (s'_\ell-1)\Bigr)$.
  \end{itemize}

  The full decomposition map is given by the equation

  \begin{equation}\label{eq: fulldecomp}
    \Delta_{\Lambda^c}(\mu^\antishriek_\cU)
    = \sum_{\substack{(p, q_1,\ldots, q_p, \\ s'_i,s''_i,\sigma_1,\ldots, \sigma_p)}}
    \Bigl(\prod_{i=1}^p \sgn(\sigma_i) \Bigr) (-1)^{\delta(s',s'',q,p)} (\mu^\antishriek_{\cU'}; \mu^\antishriek_{{}_1\cU''},\ldots, \mu^\antishriek_{{}_p\cU''} ),
  \end{equation}
  where the sum is over:
  \begin{itemize}
    \item positive indices $(p,q_1,\ldots, q_p)$ such that $q_1+\cdots +q_p = k$;
    \item indices $s'_i, s''_i$ satisfying $s_i-1 = s'_i+s''_i-2$ for $1\leq i\leq k$;
    \item shuffles $\sigma_j \in \Sh(s'_{q_1+\cdots+q_{j-1}+1}-1,\ldots, s'_{q_1+\cdots+q_j}-1)$;
  \end{itemize}
  and where we let
  \begin{itemize}
    \item $\mu_{j\cU''}$ ($1\leq j\leq p$) and $\cU'$ be defined analogously to the infinitesimal case, and
    \item $\delta(s',s'',q,p)=\sum_{j=1}^p(q_j+1)(p-j)+ \Bigl(\sum_{j=1}^p(q_j-1)\Bigr) \Bigl(\sum_{i=1}^{k}(s'_i-1)\Bigr)+\sum_{i<\ell} \Bigl((s''_i-1)(s'_\ell-1)\Bigr)$.
  \end{itemize}
\end{lemma}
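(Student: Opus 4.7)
The plan is to compute $\Delta_{\Lambda^c}$ directly from the standard construction of a cooperad from a codistributive law (see, e.g., \autocite[Section~8.6.5]{LodayVallette2012} and \autocite[Appendix~B]{millescmha}). Using the identification $\Disj^\antishriek \cong \Tens^\antishriek \circ \PCosh^\antishriek$ from the preceding lemma, $\Delta_{\Lambda^c}$ is the composite
\[
  \Tens^\antishriek \circ \PCosh^\antishriek
  \xrightarrow{\Delta_{\Tens^\antishriek} \circ \Delta_{\PCosh^\antishriek}}
  (\Tens^\antishriek \circ \Tens^\antishriek) \circ (\PCosh^\antishriek \circ \PCosh^\antishriek)
  \cong
  \Tens^\antishriek \circ (\Tens^\antishriek \circ \PCosh^\antishriek) \circ \PCosh^\antishriek
  \xrightarrow{\id \circ \Lambda^c \circ \id}
  (\Tens^\antishriek \circ \PCosh^\antishriek) \circ (\Tens^\antishriek \circ \PCosh^\antishriek),
\]
where the middle isomorphism is the associator of $\circ$. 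I would unfold this composite on $\mu^\antishriek_{\cU}$ and match each factor to the right-hand side of \eqref{eq: fulldecomp}.

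First I would apply $\Delta_{\Tens^\antishriek}$ to the root piece $\mu^\antishriek_{U_{1s_1}, \ldots, U_{ks_k}}$. Since $\Tens^\antishriek$ is a colored copy of $\Com^\antishriek \cong \Lie^c\{1\}$ (Lemma~\ref{lem: tenskoszul}), this is the standard cocomposition of $\Com^\antishriek$: a sum over ordered partitions $q_1 + \cdots + q_p = k$, parsing the inputs into consecutive blocks of sizes $q_j$, with the known sign arising from the arity shift $\{1\}$. Simultaneously I would apply $\Delta_{\PCosh^\antishriek}$ to each chain $\iota^\antishriek_{\cU_i}$: because $\PCosh^\antishriek$ is cofree on degree $-1$ unary cogenerators (Lemma~\ref{lem: openkoszul}), the decomposition sums over all pivot splittings of $\cU_i$ into an upper subchain of length $s'_i-1$ and a lower subchain of length $s''_i-1$ with $s'_i + s''_i = s_i + 1$, contributing no sign of its own.

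Next, after reassociating so that the middle two levels are $\Tens^\antishriek$ (above) and $\PCosh^\antishriek$ (below), I would apply $\Lambda^c$ block-by-block via Equation~\eqref{eq: codistr}. For each $j$, pushing the $q_j$ upper subchains above the binary cooperation $\mu^\antishriek_{V_{j1}, \ldots, V_{jq_j}}$ past that cooperation introduces the shuffle $\sigma_j \in \Sh(s'_{q_1+\cdots+q_{j-1}+1}-1,\ldots, s'_{q_1+\cdots+q_j}-1)$ together with its sign $\sgn(\sigma_j)$ and a parity prefactor from the previous lemma. Collecting all resulting sign contributions --- from the $\Com^\antishriek$-decomposition, from the Koszul signs in the associator (which mixes operations of odd degree and produces the cross term $\sum_{i<\ell}(s''_i-1)(s'_\ell-1)$), from the $\Lambda^c$-prefactors, and from the shuffles --- yields the claimed sign $\delta(s',s'',q,p)$.

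The principal obstacle is exactly this sign reconciliation: four independent sources must be tracked and added modulo $2$ in accordance with the sign conventions of Section~\ref{sec conventions}. Once \eqref{eq: fulldecomp} is established, the infinitesimal formula is a specialization. An infinitesimal cocomposition corresponds to the case in which exactly one block has size $q_j = q$ and every other block has size $1$, forcing $p + q = k + 1$. Under this restriction, every shuffle $\sigma_l$ with $l \neq j$ is trivial, $s''_i = 1$ for $i$ outside the block, and the $\delta$-sign reduces (after discarding even contributions such as $2\sum_{l\neq j}(p-l)$) to $\epsilon(s'_i, s''_i, j, p, q)$, which recovers the statement for $\Delta^{(1)}_{\Lambda^c}$.
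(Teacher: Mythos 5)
Your proposal is correct and follows essentially the same route as the paper: unfold the decomposition map of the cooperad defined by the codistributive law as $(\id\circ\Lambda^c\circ\id)\circ(\Delta_{\Tens^\antishriek}\circ\Delta_{\PCosh^\antishriek})$ (the argument of \textcite{millescmha}, Proposition~2.3), and attribute the three summands of $\delta$ respectively to the $\Com^\antishriek$-cocomposition, the prefactor in Equation~\eqref{eq: codistr}, and the Koszul signs in the associator of $\circ$. Your derivation of the infinitesimal formula as the specialization $q_l=1$ for $l\neq j$ (with the even term $2\sum_{l\neq j}(p-l)$ discarded) is a harmless reorganization of the same computation.
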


\begin{proof}
  The proof is very similar to the proof of \autocite[Proposition~2.3]{millescmha}.
  In $\epsilon$ and $\delta$, the term $(q+1)(p-j)$ is the sign that appears in the decomposition product for $\As^\antishriek$ \autocite[Lemma~9.1.2]{LodayVallette2012}.
  The term $(q-1)\sum(s'-1)$ appears because of the corresponding term in the codistributive law.
  The last term $\sum\left((s''-1)\sum(s'-1)\right)$ appears for the same reasons as it does in \autocite{millescmha}, i.e. from the sign rules arising in the associator for the composition product $\circ$ of $\SS$-modules.
\end{proof}

\section{Description of homotopy prefactorization algebras}\label{sec descr htpy prefac}

In this section, we make explicit the structure present in a $\hoDisj$ algebra, and we describe a homotopy transfer theorem for $\hoDisj$ algebras.

\begin{proposition}\label{prop: hodisjexplicit}
  A $\hoDisj$ algebra is a collection of spaces $\cA(U)$, one for every open subset $U\subseteq M$, equipped with maps:
  \begin{equation*}
    \mu_{\cU}: \cA(U_{11})\otimes \cdots \otimes \cA(U_{k1})\to \cA(U_{1s_1}\sqcup \cdots \sqcup U_{ks_k})
  \end{equation*}
  for every collection $\cU = \bigl( U_{11} \subset \dots \subset U_{1s_1}, \dots, U_{k1} \subset \dots \subset U_{k s_k}\bigr)$ as in Lemma~\ref{lem: compprod}, such that:
  \begin{enumerate}[nosep]
    \item The $\mu_\cU$ have degree $2-k-\sum_i (s_i-1)$.
    \item\label{it mu shuf}
          The $\mu_\cU$ vanish on graded sums of shuffle permutations, i.e.
          \begin{equation*}
            \sum_{\sigma\in \Sh(\ell_1,\ldots, \ell_n)\subset \SS_k} \delta_\sigma\,\mu_{\cU\cdot\sigma} \circ \sigma =0,
          \end{equation*}
          where:
          \begin{itemize}[nosep]
            \item $\sigma$ is understood as a map
                  \begin{equation*}
                    \cA(U_{11})\otimes \cdots \otimes \cA(U_{k1})\to \cA(U_{\sigma^{-1}(1)1})\otimes \cdots \otimes \cA(U_{\sigma^{-1}(k)1}),
                  \end{equation*}
            \item The symbol $\cU\cdot\sigma$ has the meaning
                  \begin{equation*}
                    \cU\cdot\sigma = (\cU_{\sigma^{-1}(1)},\ldots, \cU_{\sigma^{-1}(k)}),
                  \end{equation*}
            \item The sign $\delta_\sigma$ is determined by the sign incurred from the obvious action of $\sigma$ on
                  \begin{equation*}
                    \Lambda^{\mathrm{top}}\RR^{k}\otimes \Lambda^{\mathrm{top}}\RR^{s_1-1}\otimes \cdots \otimes \Lambda^{\mathrm{top}}\RR^{s_k-1}
                  \end{equation*}
          \end{itemize}
          \item\label{it diff mu cU}
          The $\mu_\cU$ satisfy the following relations:
          \begin{multline}
            d(\mu_\cU)= \sum_{i=1}^k (-1)^{k+\sum_{j=1}^{i-1}(s_j-1)} \mu_{d_i\cU} \\
            +\sum_{\substack{(p, q_1,\ldots, q_p, \\ s'_i,s''_i,\sigma_1,\ldots, \sigma_p)}} \sgn(\sigma)(-1)^{p-1+\sum(s'_i-1)}(-1)^{\epsilon(s'_i,s''_i,j,p,q)}\mu_{\cU'}\circ_j\mu_{\cU''},
            \label{eq diff mu}
          \end{multline}
          where the second sum ranges over:
          \begin{itemize}[nosep]
            \item positive indices $(p,q_1,\ldots, q_p)$ such that $q_1+\cdots +q_p = k$;
            \item indices $s'_i, s''_i$ satisfying $s_i-1 = s'_i+s''_i-2$ for $1\leq i\leq k$;
            \item shuffles $\sigma_j \in \Sh(s'_{q_1+\cdots+q_{j-1}+1}-1,\ldots, s'_{q_1+\cdots+q_j}-1)$.
            \item the symbols $\cU'$, $\cU''$, and $\epsilon(s_i',s_i'',j,p,q)$ are defined in the statement of Lemma~\ref{lem: infdecomp}.
          \end{itemize}
  \end{enumerate}
\end{proposition}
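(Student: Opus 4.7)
The plan is to unfold the definition of an algebra over $\hoDisj = \Omega(\Disj^\antishriek)$. By the standard adjunction $\Hom_{\Op}(\Omega\Disj^\antishriek, \mathrm{End}_\cA) \cong \Tw(\Disj^\antishriek, \mathrm{End}_\cA)$, a $\hoDisj$-algebra structure on $\cA = \{\cA(U)\}$ is equivalent to a twisting morphism $\tau \colon \Disj^\antishriek \to \mathrm{End}_\cA$. As an $\SS$-module, $\Disj^\antishriek \cong \Tens^\antishriek \circ q\PCosh^\antishriek$, and Lemma~\ref{lem: infdecomp} exhibits the elements $\mu^\antishriek_\cU$ as a spanning set. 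The morphism $\tau$ is therefore determined by the collection $\mu_\cU \coloneqq \tau(\mu^\antishriek_\cU)$, each of which is an operation of the claimed color.

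For condition~(1), one computes degrees. Using $\Com^\antishriek(k) \cong \Lie^c(k)\{1\}$ with $\Lie^c$ concentrated in cohomological degree $0$, the $\{1\}$ shift places $\Com^\antishriek(k)$ in degree $1-k$; each unary cogenerator $\extantishriek$ contributes degree $-1$, so $\iota^\antishriek_{\cU_i}$ sits in degree $-(s_i-1)$, and thus $\mu^\antishriek_\cU$ has degree $1 - k - \sum_i(s_i-1)$. Passing to $\Omega\Disj^\antishriek$ via the desuspension $s^{-1}$ shifts by $+1$, producing $\mu_\cU$ of cohomological degree $2 - k - \sum_i(s_i-1)$. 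Condition~(2) is the pullback along $\tau$ of the cooperadic relations carving out $\Com^\antishriek(k) \cong \Lie^c(k)\{1\}$ inside $\cT^c(E(2)[1])$: a basis of $\Lie^c(k)$ is given by binary shuffle trees modulo co-Jacobi, and vanishing on shuffle sums encodes exactly this condition. The signs $\delta_\sigma$ track the $\SS_k$-action on the orientation line $\Lambda^{\mathrm{top}}\RR^k \otimes \bigotimes_i \Lambda^{\mathrm{top}}\RR^{s_i-1}$ implicit in the $\{1\}$-shift of $\Com^\antishriek$ and the suspensions of the unary cogenerators.

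For condition~(3), equation~\eqref{eq diff mu} is the Maurer--Cartan equation $\partial \tau + \tau \star \tau = 0$ defining the twisting morphism, evaluated at $\mu^\antishriek_\cU$. The internal differential on $\mathrm{End}_\cA$ contributes $d(\mu_\cU)$; the pullback through $\tau$ of the differential on $\Disj^\antishriek$ described in~\eqref{eq diff Koszul dual} yields the first sum $\sum_{i} (-1)^{k + \sum_{j<i}(s_j - 1)} \mu_{d_i \cU}$. The convolution $\tau \star \tau$ is computed from the full decomposition formula~\eqref{eq: fulldecomp}, producing the second sum involving the partial compositions $\mu_{\cU'} \circ_j \mu_{\cU''}$ with the signs $\sgn(\sigma)(-1)^{\delta(s',s'',q,p)}$ inherited from Lemma~\ref{lem: infdecomp}.

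The main obstacle I anticipate is sign bookkeeping: the extra factor $(-1)^{p-1+\sum(s'_i-1)}$ appearing in~\eqref{eq diff mu} relative to the decomposition signs of~\eqref{eq: fulldecomp} must be deduced from the Koszul signs incurred when commuting the desuspension $s^{-1}$ past the convolution product, together with the sign conventions of \autocite[Section~5.1.8]{LodayVallette2012}. Once these signs are carefully tracked, the underlying combinatorics are entirely supplied by Lemma~\ref{lem: infdecomp}, and the equivalence of the two packages of data is formal.
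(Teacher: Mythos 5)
Your proposal is correct and follows essentially the same route as the paper: unfold the cobar construction (equivalently, via the twisting-morphism adjunction), read off the generating operations, their degrees, and the shuffle-vanishing condition from $\Disj^\antishriek \cong \Tens^\antishriek \circ q\PCosh^\antishriek$ with $\Com^\antishriek \cong \Lie^c\{1\}$, and split the cobar differential into the internal-differential term and the decomposition term. The one small imprecision is that the convolution $\tau \star \tau$ in the Maurer--Cartan equation is governed by the \emph{infinitesimal} decomposition $\Delta^{(1)}_{\Lambda^c}$ of Lemma~\ref{lem: infdecomp} (with sign $\epsilon$) rather than the full decomposition~\eqref{eq: fulldecomp} (with sign $\delta$), though this does not affect the substance of the argument.
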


\begin{proof}
  A $\hoDisj$-algebra is, by definition, an algebra over the colored operad $\Omega \Disj^\antishriek$, which is semi-free on the shift of the reduced cooperad $\overline{\Disj}^\antishriek$.
  Hence, a $\hoDisj$-algebra has one operation for every non-identity cooperation in $\Disj^\antishriek$.
  The operad $\Omega\Disj^\antishriek$ has a differential which is the sum of two terms: one induced from the differential on $\Disj^\antishriek$ (Equation~\eqref{eq diff Koszul dual}), and one induced from the cooperadic structure (Lemma~\ref{lem: infdecomp}).
  These terms correspond, respectively, to the two separate sums in Condition~\ref{it diff mu cU} of the Proposition.

  The only thing which has not been spelled out in the preceding propositions is Condition~\ref{it mu shuf}.
  To establish this condition, we note that because $\Tens$ resembles a commutative operad, the sum over shuffles should be unsurprising.
  The main differences from the usual commutative operad are the presence of colors and of the extra composition factor $\PCosh$ in $\Disj$.
  We deal with the first issue by permuting also the colors (i.e. by introducing the $\cU\cdot \sigma$).
  We deal with the second issue by introducing the extra $s_i$-dependent signs in $\delta_\sigma$.
\end{proof}

Now that we have made explicit what it is to have an algebra over the operad $\hoDisj$, we can also say what we mean by an infinity-morphism of $\hoDisj$-algebras.

\begin{definition}
  Let $\cA$ and $\cB$ be algebras over the operad $\hoDisj$.
  By the Rosetta Stone of the theory of homotopy algebras \autocite[Theorem~10.1.13]{LodayVallette2012}, these are given by codifferentials $D_\cA$, $D_\cB$ on the cofree $\Disj^\antishriek$-algebras $\Disj^\antishriek(\cA), \Disj^\antishriek(\cB)$, respectively.
  An \textbf{infinity-morphism} of $\hoDisj$-algebras $\cA\rightsquigarrow \cB$ is a map of dg-$\Disj^\antishriek$-coalgebras
  \begin{equation*}
    (\Disj^\antishriek(\cA),D_\cA)\to(\Disj^\antishriek(\cB),D_\cB)
  \end{equation*}
  of the underlying semi-cofree $\Disj^\antishriek$-coalgebras.
\end{definition}

The above definition is compact, but not so useful in making explicit what one needs to check in practice to guarantee that one has an infinity-morphism.
The below proposition aims to remedy this situation:

\begin{proposition}
  Let $\cA$ and $\cB$ be algebras over the operad $\hoDisj$, with operations $\mu_\cU^\cA$ and $\mu_\cU^\cB$, respectively,
  An infinity-morphism $\cA \rightsquigarrow \cB$ is given by a collection of maps
  \begin{equation*}
    f_{\cU}: \cA(U_{11})\otimes \cdots \otimes \cA(U_{k1})\to \cB(U_{1s_1}\sqcup\cdots\sqcup \cB(U_{1s_k}))
  \end{equation*}
  of degree $1-k- \sum_i (s_i-1)$ satisfying the symmetry property 2 in Proposition~\ref{prop: hodisjexplicit} and the relation
  \begin{align}\label{eq: inftymorphism}
    d & (f_\cU) = \sum_{i=1}^k (-1)^{k+1+\sum_{j=1}^{i-1}s_j-1} f_{d_i\cU} \nonumber \\
    + & \sum_{(p,q,j,s_i,s''_i,\sigma)} \sgn(\sigma)(-1)^{p-1+\sum(s'_i-1)+\epsilon(s'_i,s''_i,j,p,q)} f_{\cU'}\circ_j \mu^A_{\cU''}\nonumber \\
      & - \sum_{(p,s'_i,s''_i,q_j,\sigma_j)} \Bigl( \prod_{j=1}^p \sgn(\sigma_j) \Bigr) (-1)^{\delta(s'_i,s''_i,q_j,p)} (\mu^B_{\cU'}; f_{{}_1\cU''},\ldots f_{{}_p\cU''})
  \end{align}
\end{proposition}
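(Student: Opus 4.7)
The plan is to unfold the abstract definition of infinity-morphism---a dg-coalgebra map between the cofree $\Disj^\antishriek$-coalgebras cogenerated by $\cA$ and $\cB$---into concrete conditions on a family of maps $f_\cU$. The approach parallels that of Proposition~\ref{prop: hodisjexplicit}, but with source and target coalgebras now possibly distinct. I will follow the blueprint of \autocite[Section~10.2]{LodayVallette2012} for a general Koszul operad, specialized to the colored, quadratic-linear setting.

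First, I would apply the universal property of cofree coalgebras: a coalgebra morphism
\[
F \colon (\Disj^\antishriek(\cA), D_\cA) \to (\Disj^\antishriek(\cB), D_\cB),
\]
ignoring differentials for the moment, is uniquely determined by its composite $\bar F$ with the canonical projection onto cogenerators $\Disj^\antishriek(\cB) \to \cB$. Under the decomposition $\Disj^\antishriek \cong \Tens^\antishriek \circ \PCosh^\antishriek$ (Lemma~\ref{lem: infdecomp}), the $\SS$-module $\Disj^\antishriek$ has the basis $\{\mu^\antishriek_\cU\}$, so that $\bar F$ is equivalent to the collection of maps $f_\cU$ with the asserted degrees. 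The shuffle-vanishing condition \ref{it mu shuf} transcribes the $\SS$-equivariance of $\bar F$ together with the commutative-like relations inherited from the Koszul dual of $\Tens$; the signs $\delta_\sigma$ account for the degree shifts carried by the $\extantishriek$ cogenerators, just as in the analogous step of Proposition~\ref{prop: hodisjexplicit}.

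The substance of the argument is the translation of the equation $F \circ D_\cA = D_\cB \circ F$ into Equation~\eqref{eq: inftymorphism}. Again by the universal property, this equation is equivalent to its projection onto cogenerators, which I would evaluate on an arbitrary basis element $\mu^\antishriek_\cU \otimes (a_1 \otimes \cdots \otimes a_k)$. The differential $D_\cA$ decomposes into two pieces: the internal differential on $\Disj^\antishriek$ given by~\eqref{eq diff Koszul dual}, which contributes the sum over $f_{d_i\cU}$; and the coderivation extending the $\hoDisj$-structure $\mu^\cA$ along the infinitesimal decomposition $\Delta^{(1)}_{\Lambda^c}$, which contributes the sum $f_{\cU'}\circ_j \mu^\cA_{\cU''}$. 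The differential $D_\cB$ on the target, extended as a coderivation via the full decomposition~\eqref{eq: fulldecomp}, contributes the final sum $(\mu^\cB_{\cU'}; f_{{}_1\cU''}, \ldots, f_{{}_p\cU''})$, in which operations of $\cB$ are applied after the various $f_{{}_j\cU''}$.

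The principal obstacle, as always in such computations, is sign bookkeeping. One must reconcile the Koszul sign rule in the composition product $\circ$ of $\SS$-modules, the suspension shift implicit in passing from $\Disj^\antishriek$ to the cobar construction $\Omega \Disj^\antishriek$, and the signs $\epsilon$ and $\delta$ already computed in Lemma~\ref{lem: infdecomp}. The extra factor $(-1)^{p-1+\sum(s'_i-1)}$ appearing in~\eqref{eq: inftymorphism} relative to~\eqref{eq: fulldecomp} arises precisely from this suspension, together with the fact that each $f_\cU$ carries degree $1-k-\sum_i(s_i-1)$ rather than $2-k-\sum_i(s_i-1)$. I would carry out this verification in tandem with the signs already established for Proposition~\ref{prop: hodisjexplicit}, since the infinity-morphism relation is essentially obtained from the $\hoDisj$-algebra relation by a mapping-cone construction, so the signs should agree up to a predictable shift in degree.
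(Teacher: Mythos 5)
Your proposal is correct and matches the substance of the paper's (very terse) proof: the paper simply cites the explicit description of the cooperad $\Disj^\antishriek$ together with the cylinder-object discussion in Fresse, which encodes exactly the unfolding of the dg-coalgebra morphism condition $F \circ D_\cA = D_\cB \circ F$ that you carry out via the universal property of cofree coalgebras. Your identification of the three contributions (internal differential of $\Disj^\antishriek$ giving the $f_{d_i\cU}$ terms, the infinitesimal decomposition against $\mu^\cA$, and the full decomposition against $\mu^\cB$) together with the degree shift by one is the standard Rosetta-Stone translation, and the sign bookkeeping you defer is likewise left implicit in the paper.
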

\begin{proof}
  This proposition follows from the explicit description of the cooperad $\Disj^\antishriek$ and the discussion of cylinder objects in \autocite{FresseCobar} (cf. Figure 10 therein).
\end{proof}

The following statement follows from the general facts of Koszul theory (cf.~\autocite{LodayVallette2012}, Theorem 10.3.1)

\begin{proposition}\label{prop HTT}
  Let $\cA$ be a $\hoDisj$ algebra and suppose given, for every open set $U\subset M$, a deformation retraction
  \[
    \ourDR{\cB(U)}{\cA(U)}{i_U}{p_U}{h_U};
  \]
  then, there exists a $\hoDisj$ structure on the collection $\{\cB(U)\}$ such that the maps $i_U$ extend to an $\infty$-morphism~$\cB\rightsquigarrow \cA$.
\end{proposition}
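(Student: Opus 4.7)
The plan is to reduce the statement to the general Homotopy Transfer Theorem for algebras over the cobar construction of a (Koszul) cooperad (Theorem~10.3.1 of \autocite{LodayVallette2012}). The key input that allows us to apply that theorem is Theorem~\ref{thm: disjkoszul}, which tells us that $\Disj$ is Koszul, so that $\hoDisj = \Omega \Disj^\antishriek$ is a cofibrant resolution of $\Disj$, with $\Disj^\antishriek$ cofibrantly resolving $B\Disj$.

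First, I would interpret a $\hoDisj$-algebra $\cA$ as a $\PCosh$-colored object: the underlying datum is a chain complex $\cA \in \Ch^{\PCosh}$, i.e., a collection $\{\cA(U)\}_{U\in\PCosh}$, together with a morphism of colored operads $\hoDisj \to \mathrm{End}_\cA$, where $\mathrm{End}_\cA$ is the $\PCosh$-colored endomorphism operad. The deformation retractions
\[
  \ourDR{\cB(U)}{\cA(U)}{i_U}{p_U}{h_U}
\]
for each $U$ assemble into a single deformation retraction
\[
  \ourDR{\cB}{\cA}{i}{p}{h}
\]
in the category $\Ch^{\PCosh}$ of $\PCosh$-colored chain complexes; the side conditions on $i,p,h$ hold color-wise and therefore globally.

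Next, I would invoke the colored version of the operadic Homotopy Transfer Theorem (see \autocite[Theorem~10.3.1, Theorem~10.3.3]{LodayVallette2012}; the proof is formally identical in the colored setting). Applied to the cooperad $\cC = \Disj^\antishriek$ and the deformation retraction above, it produces:
\begin{enumerate}[nosep]
  \item a $\hoDisj$-algebra structure on $\cB$, given on cogenerators $\mu^\antishriek_{\cU} \in \Disj^\antishriek$ by the ``van der Laan formula''
  \[
    \mu^{\cB}_{\cU} \;=\; p \circ \Phi_{\cU}(\mu^\antishriek_{\cU}) \circ i^{\otimes k},
  \]
  where $\Phi_\cU$ is the image of $\mu^\antishriek_\cU$ under the canonical twisting morphism $\Disj^\antishriek \to \mathrm{End}_\cA$ built from the operations $\mu^{\cA}_{\cU'}$ of $\cA$ and the homotopy $h$ inserted along all internal edges of the trees describing the infinitesimal decomposition of $\Disj^\antishriek$ (Lemma~\ref{lem: infdecomp});
  \item an $\infty$-morphism $i_\infty : \cB \rightsquigarrow \cA$ whose first component is $i$, with higher components given by an analogous tree sum in which the root is $i$ (instead of $p$) and the $h$'s are inserted along all internal edges except the root.
\end{enumerate}
Each formula is a finite sum because the cooperations of $\Disj^\antishriek$ with fixed output colors and input colors live in a single arity.

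Verifying that these satisfy the relations of Proposition~\ref{prop: hodisjexplicit} and of the $\infty$-morphism equation~\eqref{eq: inftymorphism} is the content of \autocite[Theorem~10.3.1]{LodayVallette2012}; the only thing to check in our setting is that each step of the argument there (the rewriting of the Maurer–Cartan equation in the convolution Lie algebra $\Hom_{\SS}(\Disj^\antishriek,\mathrm{End}_\cB)$, the use of the side conditions $pi=\id$, $ip-\id = dh+hd$, $h^2=0$, $ph=0$, $hi=0$) respects colors. This is straightforward: all of the morphisms $i, p, h$ are color-preserving, and the twisting morphism $\Disj^\antishriek \to \mathrm{End}_\cA$ respects the $\PCosh$-coloring by construction, so every composition appearing in the formulas lands in the same color sector.

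The only potentially delicate point — and thus the ``main obstacle'' — is checking that no color-indexing pathology arises when one transports the HTT machinery from the single-colored to the $\PCosh$-colored setting. Since the Koszulity result of Theorem~\ref{thm: disjkoszul} is proved in the colored setting and the conilpotence and weight-grading of $\Disj^\antishriek$ hold color-wise (the weight-$n$ cogenerators with a fixed tuple of input colors form a finite-dimensional space), the convolution dg Lie algebra argument goes through unchanged. This gives the desired transferred $\hoDisj$-structure on $\cB$ and the $\infty$-morphism extending the $i_U$.
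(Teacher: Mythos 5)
Your proposal is correct and matches the paper's argument, which likewise simply invokes the general operadic Homotopy Transfer Theorem (\autocite[Theorem~10.3.1]{LodayVallette2012}) applied to the Koszul cooperad $\Disj^\antishriek$, with the color-wise deformation retractions assembled into one retraction of $\PCosh$-colored complexes. Your added discussion of the colored bookkeeping and the van der Laan tree formulas is more detail than the paper provides, but it is the same route.
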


\begin{remark}\label{rmk koszul self dual}
  The description above dualizes to a description of complete algebras over $\Disj^! = (q\Disj^!, d)$.
  (Note that we must use complete algebras because the space of generators is not finite dimensional.)
  A complete $\Disj^!$-algebra is given by a collection of dg-spaces $\cC(U)$, one for every open subset $U \subseteq M$, equipped with maps:
  \begin{align*}
    \extshriek : \cC(U) & \to \cC(V), & \text{for } U \subsetneq V; \\
    \binshriek : \cC(U) \otimes \cC(V) &\to \cC(U \sqcup V), & \text{for } U \cap V = \emptyset;
  \end{align*}
  of respective degrees $\deg(\extshriek) = 0$ and $\deg(\binshriek) = 1$.
  The binary operations are antisymmetric and satisfy a relation similar to that of shifted Lie algebras: for disjoint open sets $U, V, W \subseteq M$, we have
  \begin{equation*}
    \binshriek[U \sqcup V][W](\binshriek[U][V](x,y), z) \pm \binshriek[V \sqcup W][U](\binshriek[V][W](y,z), x) \pm \binshriek[W \sqcup U][V](\binshriek[W][U](z,x), y) = 0.
  \end{equation*}
  The unary operations satisfy compatibility relation with the differential: for $U \subsetneq W$, we have
  \begin{equation*}
    d \bigl( \extshriek[U][W](x) \bigr) = \extshriek[U][W]\bigl( d(x) \bigr) + \sum_{U \subsetneq V \subsetneq W} \extshriek[V][W] \bigl( \extshriek[U][V](x) \bigr).
  \end{equation*}
  Finally, the binary and unary operations satisfy a compatibility relation that essentially make unary operations into derivations of Lie algebras.
  Note that per this description, the operad $\Disj$ is not Koszul self-dual, unlike e.g., the $E_n$ operads.
\end{remark}

\section{Examples}\label{sec examples}

In this section, we apply the general theory of the preceding sections to some examples.
First, we discuss what it means to give a $\hoDisj$-algebra on some simple topological spaces.
Next, we discuss a result concerning a factorization algebra on $\RR$, extending a result of \textcite{CG1}.

\subsection{Preliminary: (\texorpdfstring{$\infty$}{infinity}-)modules over an algebra over an operad}

To make explicit the descriptions of homotopy prefactorization algebras on several finite topological spaces, we will need to recall a few background notions.

\begin{definition}[See e.g. {\autocite[Section~12.3.1]{LodayVallette2012}}]
  Let $\cN$ be a symmetric sequence and $A, M$ be dg-modules.
  The linearized composition product is defined by:
  \begin{equation*}
    \cN \circ (A; M) \coloneqq \bigoplus_{n \geq 0} \cN(n) \otimes_{\Sigma_n} \Bigl( \bigoplus_{i=1}^n A^{\otimes i-1} \otimes M \otimes A^{\otimes n-i} \Bigr).
  \end{equation*}
  Suppose now that $\cP$ is a dg-operad and $A$ is a $\cP$-algebra.
  An $A$-$\cP$-module (or simply $A$-module if $\cP$ is obvious from the context) is an object $M$ equipped with a map
  \begin{equation*}
    \gamma_M : \cP \circ (A; M) \to M
  \end{equation*}
  making the obvious diagrams commute.
\end{definition}

\begin{example}
  Any $\cP$-algebra $A$ is canonically a $\cP$-module over itself.
\end{example}

\begin{example}\label{ex infty module as}
  Let $\cP_\infty = \Omega\As^\antishriek$ be the $A_\infty$-operad and $A$ be an $A_\infty$-algebra.
  Then an $A$-module is a dg-module $M$ equipped with maps
  \begin{align*}
    \mu_{k, i} : A^{\otimes i - 1} \otimes M \otimes A^{k-i} & \to M[k-2], & \text{for } k \geq 2 \text{ and } 1 \leq i \leq k.
  \end{align*}
  These maps satisfy relations such that
  \begin{align*}
    \mu_{2,1} : M \otimes A & \to M,             & \mu_{2,2} : A \otimes M & \to M, \\
    m \otimes a             & \mapsto m \cdot a, & a \otimes m             & \mapsto a \cdot m,
  \end{align*}
  endow $M$ with a structure of an $A$-module up to homotopy.
  For example, if we write $a \otimes b \mapsto a * b$ for the binary operation in $A$, then we have the following relations (see Figure~\ref{fig infty module as} for a graphical representation):
  \begin{align*}
    (\partial \mu_{3,1})(m, a, b) & = m \cdot (a * b) - (m \cdot a) \cdot b, \\
    (\partial \mu_{3,2})(a, m, b) & = a \cdot (m \cdot b) - (a \cdot m) \cdot b, \\
    (\partial \mu_{3,3})(a, b, m) & = (a * b) \cdot m - a \cdot (b \cdot m).
    \qedhere
  \end{align*}
\end{example}

\begin{figure}[htbp]
  \centering
  \begin{equation*}
    \begin{forest}
      smalloperad
      [[,mod,baseline [1,mod][2][3]]]
    \end{forest}
    \xmapsto{\partial}
    \begin{forest}
      smalloperad
      [[,mod,baseline [1,mod][[2][3]]]]
    \end{forest}
    -
    \begin{forest}
      smalloperad
      [[,mod,baseline [,mod [1,mod][2]][3]]]
    \end{forest}
  \end{equation*}
  \caption{The first relation of Example~\ref{ex infty module as} illustrated by trees. The solid black edges are colored by $A$, the dashed red colored by $M$.}\label{fig infty module as}
\end{figure}
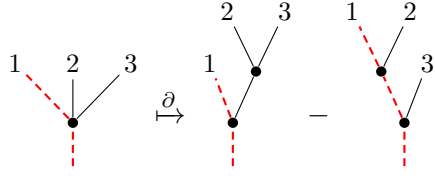

This notion could have been defined using the moperad~\autocite[Definition 9]{Willwacher2016} given by the shift $\cP(\,\_+1) = \{\cP(r+1)\}_{r \geq 0}$.
While we couldn't specifically find the next definition in the existing literature, it follows directly from the moperadic description (see \autocite[Section~10.2.2]{LodayVallette2012} for the unicolored case).

\begin{propdef}
  Let $\cP_{\infty} = \Omega \cC$ be the cobar construction of a dg-cooperad, let $\kappa : \cP_\infty \to \cC$ be the canonical Koszul twisting morphism, let $A,A'$ be $\cP_\infty$-algebras, and let $M, M'$ be modules over these algebras.
  An $\infty$-morphism $(f,g) : (A,M) \leadsto (A',M')$ is the data of:
  \begin{itemize}
    \item An $\infty$-morphism of $\cP_\infty$-algebras $f : A \leadsto A'$, i.e., a morphism of dg-$\cC$-coalgebras between the bar constructions $B_\kappa A = \bigl(\cC(A), d_\kappa\bigr) \to B_\kappa A' = \bigl(\cC(A'), d_\kappa\bigr)$.
          Such a morphism is uniquely determined by a map $f : \cC \circ A \to A'$ satisfying several compatibility relations.
    \item A map $g : \cC \circ (A; M) \to M'$ which is compatible with $f$ and the algebra/module structures.
          This map has to satisfy the following relations.
          Suppose that
          \begin{equation*}
            \xi = x(a_1, \dots, a_{i-1}, m, a_{i+1}, \dots, a_r) \in \cC(A; M)
          \end{equation*}
          is an element in the linearized composition product.
          Let $\Delta_{(1)}(x) = \sum_\alpha x'_\alpha \circ_{j_\alpha} x''_\alpha$ be an expression of the infinitesimal decomposition product applied to $x$, and let $\Delta(x) = \sum_\beta y'_\beta(y''_{\beta, 1}, \dots, y''_{\beta, k_\beta})$ be one for the total decomposition product.
          Then one has (up to signs that we will not write down here):
          \begin{multline*}
            (\partial g)(\xi) = \sum_\alpha
            \begin{cases}
              \pm (g(x'_\alpha) \circ_{j_\alpha} f(x''_\alpha))(a_1, \dots, m, \dots, a_r),
               & \text{if } j_\alpha \leq m \leq j_\alpha + |x''_\alpha| \\
              \pm (g(x'_\alpha) \circ_{j_\alpha} g(x''_\alpha))(a_1, \dots, m, \dots, a_r),
               & \text{otherwise};
            \end{cases}
            \\
            - \sum_\beta \pm (g(y'_\beta))(f(y''_{\beta,1}), \dots, g(y''_{\beta,i}), \dots, f(y''_{\beta,k_\beta}))(a_1, \dots, m, \dots, a_r).
          \end{multline*}
  \end{itemize}
\end{propdef}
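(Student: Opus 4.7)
The strategy is to adapt the classical Rosetta Stone proof for $\infty$-morphisms of $\cP_\infty$-algebras (\autocite[Section~10.2.2]{LodayVallette2012}) to the setting of pairs $(A,M)$ of algebras and modules, using the moperadic interpretation. First, I would recall that any $\cP_\infty$-algebra $A$ gives rise to the bar construction $B_\kappa A = (\cC \circ A, d_\kappa)$, a dg-$\cC$-coalgebra cofreely cogenerated by $A$. Analogously, an $A$-module $M$ should give rise to a dg-comodule $B_\kappa^M M = (\cC \circ (A; M), d_\kappa^M)$ over $B_\kappa A$, cofreely cogenerated by $M$, whose differential combines the internal differentials on $A, M$, the cobar differential coming from $\cC$, and a twisting part built from $\gamma_A, \gamma_M$ and $\kappa$. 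I would first make this construction precise and verify that it is indeed a dg-comodule (using that the moperad structure on $\cP_\infty(\,\_ + 1)$ is encoded by the linearized decomposition of $\cC$).

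Next, I would define an $\infty$-morphism $(A,M) \rightsquigarrow (A', M')$ to be a pair $(F, G)$ consisting of a morphism of dg-$\cC$-coalgebras $F : B_\kappa A \to B_\kappa A'$ and a morphism $G : B_\kappa^M M \to B_{\kappa}^{M'} M'$ of dg-comodules along $F$. By cofreeness of the cogenerating constructions, $F$ and $G$ are each uniquely determined by their projections onto cogenerators, yielding the respective maps $f : \cC \circ A \to A'$ and $g : \cC \circ (A; M) \to M'$. Thus the definitional part of the propdef (i.e., that $(f,g)$ with the stated compatibility data exactly specifies $(F,G)$) follows from the universal property of cofree $\cC$-coalgebras and comodules.

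The remaining content is to translate the conditions $F \circ d_\kappa = d_\kappa \circ F$ and $G \circ d_\kappa^M = d_\kappa^{M'} \circ G$ into the stated explicit relations on $f$ and $g$. For $f$, this is exactly the classical unfolding of \autocite[Theorem~10.2.3]{LodayVallette2012}. For $g$, one projects $G \circ d_\kappa^M = d_\kappa^{M'} \circ G$ onto $M'$ and uses that $d_\kappa^M$ splits, via the infinitesimal and total decomposition maps of $\cC$, into a sum of two types of terms: one in which the distinguished module slot lies inside the ``inner'' factor $x''_\alpha$ (so $\gamma_M$ must be applied and the algebra entries elsewhere are untouched, giving a $(g(x'_\alpha) \circ_{j_\alpha} f(x''_\alpha))$ term or a $g \circ_{j_\alpha} g$ term depending on whether $m$ sits in $x'_\alpha$ or $x''_\alpha$), and one in which the module slot lies outside (producing the $(\mu^{M'}$-style, $g \circ g$ configuration via $\Delta$). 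Expanding and reorganizing by cogenerator arity yields the displayed formula for $(\partial g)(\xi)$.

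The main obstacle is bookkeeping rather than conceptual. The two sources of signs are the Koszul sign rule for the composition product $\circ$ (as in \autocite[Section~5.1.8]{LodayVallette2012}) and the degree shift $|\kappa|$; combined with the distinction between the infinitesimal decomposition $\Delta_{(1)}$ (cogenerators of weight one) and the total decomposition $\Delta$, these produce the two kinds of terms (the single-$\circ_{j_\alpha}$ sum versus the $p$-fold bracketed sum). Checking that the moperadic coherence indeed produces the relation in exactly the form stated reduces, once the conventions are fixed, to an application of the coassociativity of $\Delta$ on $\cC$ restricted to the linearized composition product $\cC \circ (A; M)$. I would verify the delicate signs by comparing with the unicolored associative special case of Example~\ref{ex infty module as}, where the resulting relations are well-known.
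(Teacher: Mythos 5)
Your proposal is correct and follows essentially the same route the paper takes: the paper offers no detailed proof, simply asserting that the statement ``follows directly from the moperadic description'' with a pointer to \autocite[Section~10.2.2]{LodayVallette2012} for the unicolored case, and your argument --- building the bar construction of the module as a cofree dg-comodule over $B_\kappa A$, defining $\infty$-morphisms as comodule maps along coalgebra maps, and unfolding the codifferential compatibility via the infinitesimal and total decomposition maps --- is precisely the intended unfolding of that citation.
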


\begin{example}
  Let us use the notation of Example~\ref{ex infty module as}.
  Let $A, A'$ be $A_\infty$-algebras, and $M, M'$ be an $A$-module (resp.\ $A'$-module).
  Then an $\infty$-morphism $(f,g) : (A, M) \leadsto (A', M')$ is the data of an $\infty$-morphism $f = \{f_k : A^{\otimes k} \to A'[k-1] \}_{k \geq 1}$ of $A_\infty$-algebras and a collection of maps
  \begin{align*}
    g_{k,i} : A^{\otimes i - 1} \otimes M \otimes A^{k-i} & \to M'[1-k], & \text{for } k \geq 1 \text{ and } 1 \leq i \leq k.
  \end{align*}
  These maps have to satisfy various compatibility relations with the $f_k$ and the differential, such that $g_{1,1} : M \to M'$ is a morphism of bimodules up to homotopy.
  For example, we have (using the notation of Example~\ref{ex infty module as}):
  \begin{align*}
    (\partial g_{2,1})(m, a) & = g_{1,1}(m) \cdot f_1(a) - g_{1,1}(m \cdot a), \\
    (\partial g_{1,2})(a, m) & = f_1(a) \cdot g_{1,1}(m) - g_{1,1}(a \cdot m).
    \qedhere
  \end{align*}
\end{example}

\subsection{Homotopy prefactorization algebras on a few finite topological spaces}\label{sec finite}

Let us now describe explicitly $\hoDisj$-algebras on a few finite topological spaces.

\begin{remark}
  While prefactorization algebras on finite spaces appear infrequently, this section is not just an exercise in abstraction.
  Given a manifold $X$ and a $\hoDisj_X$-algebra $\cA$, the structure maps associated to the sequence of inclusions $\emptyset \subset U \subset X$ for an open subset $U$ satisfy relations analogous to the ones in $\hoDisj_S$, where $S$ is the Sierpiński space of Lemma~\ref{lem sierp}.
  One can get a sense of the relationship by considering the quotient map $\pi : X \to S$ that collapses $U$ to $\{1\}$ and $X \setminus U$ to $\{2\}$, and pushing forward structure maps along $\pi$.
  More complex configurations of open sets can be described in a similar way using larger finite topological spaces.
\end{remark}

\begin{lemma}\label{lem hodisj empty}
  A $\hoDisj$-algebra on $\emptyset$ is a $C_\infty$-algebra.
  An infinity-morphism of $\hoDisj$-algebras on $\emptyset$ is an infinity-morphism of $C_\infty$-algebras.
\end{lemma}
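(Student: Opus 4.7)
The plan is to identify $\Disj_\emptyset$ with the (classical) commutative operad $\Com$, and then conclude using the standard fact that $C_\infty = \Omega \Com^\antishriek$.

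First, I would observe that when $M = \emptyset$, the poset $\PCosh_\emptyset$ has a unique object (namely $\emptyset$ itself) and only the identity morphism, since there are no strict inclusions $U \subsetneq V$ with $U,V \in \PCosh_\emptyset$. In particular the generators $\ext$ contributing to $E(1)$ are absent, so the operad $\PCosh_\emptyset$ (equivalently $q\PCosh_\emptyset$) reduces to the trivial $\PCosh_\emptyset$-colored operad. On the other hand, for any $k \geq 2$, the collection $(\emptyset, \dots, \emptyset)$ of $k$ copies of $\emptyset$ is pairwise disjoint (trivially), so the binary generators $\bin[\emptyset][\emptyset]$ are present. The relations $R_2$ reduce, in this one-color setting, to the usual associativity/symmetry relations for a single commutative binary operation; thus $\Tens_\emptyset \cong \Com$.

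Next, I would invoke Lemma~\ref{lem: compprod}, which gives an isomorphism of colored $\SS$-modules $\Disj \cong \PCosh \circ \Tens$ (the distributive law is trivial here as $R_3$ is empty in the absence of unary generators). Specialising to $M = \emptyset$ and using the above identifications yields an isomorphism of (dg-)operads $\Disj_\emptyset \cong \Com$. Koszul duality is functorial, so $\Disj_\emptyset^\antishriek \cong \Com^\antishriek = \Lie^c\{1\}$, and applying the cobar construction gives
\begin{equation*}
  \hoDisj_\emptyset = \Omega \Disj_\emptyset^\antishriek \cong \Omega \Com^\antishriek = C_\infty,
\end{equation*}
which establishes the claim about algebras. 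The claim about $\infty$-morphisms is then immediate: $\infty$-morphisms of $\hoDisj$-algebras are by definition maps of dg-$\Disj^\antishriek$-coalgebras between the relevant cofree coalgebras, and under the isomorphism $\Disj_\emptyset^\antishriek \cong \Com^\antishriek$ these are exactly the maps of dg-$\Com^\antishriek$-coalgebras that define $\infty$-morphisms of $C_\infty$-algebras.

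There is no real obstacle here; the content is essentially bookkeeping. The one thing to double-check is the appearance of the correct signs and degrees under the identification $\Tens_\emptyset \cong \Com$, but since both are generated by a single symmetric binary operation in degree $0$ subject to associativity, and both have the same one-dimensional arity-$k$ components, the isomorphism is canonical and preserves all the structure used in the Koszul complex. Hence Koszul duality transports cleanly through the identification, and the proposition follows.
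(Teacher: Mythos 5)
Your argument is correct and is essentially the argument of the paper: both proofs come down to the observation that over $M=\emptyset$ there is a single color, no unary generators $\ext$, and the binary generators with relations $R_2$ reproduce exactly the standard presentation of $\Com$. The only difference is where you make the identification --- you establish $\Disj_\emptyset\cong\Com$ upstream and transport it through $(-)^\antishriek$ and $\Omega$, while the paper inspects the cobar generators $\mu^\antishriek_{\emptyset,\dots,\emptyset}$ of $\hoDisj_\emptyset$ directly and matches them, their shuffle vanishing, and their differential with those of $C_\infty$; the content is the same.
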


\begin{proof}
  This is almost immediate.
  The generators of $\hoDisj$ are all of the form:
  \begin{equation*}
    m_k \coloneqq \mu^{\antishriek}_{\underbrace{\emptyset, \dots, \emptyset}_{k \text{ times}}}.
  \end{equation*}
  These correspond to the usual generators of the operad $C_{\infty}$.
  They vanish on signed shuffles and their differential (Equation~\eqref{eq diff mu}) is exactly the one in $C_\infty$.
\end{proof}

\begin{remark}\label{rem a empty}
  By restriction, if $\cA$ is a homotopy prefactorization algebra on any space $M$, then $\cA(\emptyset)$ is naturally endowed with a $C_\infty$-structure.
  It is common to normalize prefactorization algebras $\cA$ by requiring that $\cA(\emptyset)$ is the ground field.
\end{remark}

Let us now deal with the next simplest case, that of a singleton.

\begin{definition}
  Let $\cP_{\infty} = \Omega \cC$ be the cobar construction on a cooperad, and let $A$ be a $\cP_{\infty}$-algebra.
  A \textbf{pointed} $A$-module is an $A$-$\cP_\infty$-module $M$ equipped with an $\infty$-module morphism $A \leadsto M$.
\end{definition}

\begin{lemma}
  Let $X = \{1\}$ be a singleton (or more generally, a nonempty indiscrete space).
  A $\hoDisj$-algebra on $X$ is a triple $\cA = (A, M, \eta_M)$ where:
  \begin{itemize}[nosep]
    \item $\cA(\emptyset) = A$ is a $C_\infty$-algebra;
    \item $\cA(X) = M$ is a $C_\infty$-$A$-module;
    \item $\eta_M = \cA(\emptyset \subset X)$ is a pointing $(\id_A, \eta_M) : (A,A) \leadsto (A,M)$.
  \end{itemize}
  An infinity-morphism of $\hoDisj$-algebras on $X$ is an $\infty$-$C_\infty$-module morphism $(A, M) \leadsto (A', M')$ that commutes with the pointings.
\end{lemma}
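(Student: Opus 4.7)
The plan is to unpack what the structure of a $\hoDisj$-algebra looks like when $X$ has only two open subsets, $\emptyset$ and $X$, and to match each family of generating operations with one of the three pieces of data $(A, M, \eta_M)$.

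First, I would enumerate the admissible chains. Since $\emptyset$ and $X$ are the only opens, a single chain $U_{i1} \subsetneq \cdots \subsetneq U_{is_i}$ is one of $(\emptyset)$, $(X)$, or $(\emptyset \subsetneq X)$. The pairwise-disjointness condition on the maximal elements $U_{is_i}$ forces at most one chain in $\cU$ to end in $X$, because $X \cap X = X \ne \emptyset$; any number of chains may end in $\emptyset$. This gives exactly three families of operations $\mu_\cU$ up to reordering of the chains. When all chains equal $(\emptyset)$, the operation is a map $A^{\otimes k} \to A$; I would appeal to Lemma~\ref{lem hodisj empty} to recognize these as the $C_\infty$-algebra generators on $A = \cA(\emptyset)$. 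When exactly one chain equals $(X)$ and the remaining $k-1$ equal $(\emptyset)$, the operation is a map $A^{\otimes j-1} \otimes M \otimes A^{\otimes k-j} \to M$; these are the candidates for the $C_\infty$-$A$-module generators on $M = \cA(X)$. When exactly one chain equals $(\emptyset \subsetneq X)$ and the rest equal $(\emptyset)$, the operation is a map $A^{\otimes k} \to M$; these will encode the components of the pointing $\eta_M$.

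Second, I would check that Equation~\eqref{eq diff mu} restricted to each family reproduces the expected relations. For the first family all $s_i = 1$, so the first sum (the ``$d_i \cU$'' sum) vanishes and Equation~\eqref{eq diff mu} collapses to the $C_\infty$-relations. The second family likewise has all $s_i = 1$; the infinitesimal decompositions $\mu_{\cU'} \circ_j \mu_{\cU''}$ split according to whether the distinguished $M$-entry lies inside $\cU''$ or stays inside $\cU'$, and one reads off exactly the relations for a $C_\infty$-module over $A$. For the third family, the chain $\cU_j = (\emptyset \subsetneq X)$ is the unique one with $s_j - 1 > 0$, so the $d_i \cU$ sum contributes a single term $\mu_{d_j \cU}$, which is an operation $A^{\otimes k} \to M$ of the second family (i.e., a $C_\infty$-module generator applied after a suitable composition). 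The second sum in Equation~\eqref{eq diff mu} sorts naturally into two kinds of terms: those in which the distinguished chain $\cU_j$ lies in the inner ($\cU''$) part of the decomposition and those in which it lies in the outer ($\cU'$) part. These match precisely the two kinds of terms in the defining relation of an $\infty$-module morphism, namely $g(x') \circ f(x'')$-type terms and $\mu^M(g, \ldots)$-type terms. The vanishing-on-shuffles condition (Condition~\ref{it mu shuf} of Proposition~\ref{prop: hodisjexplicit}) restricts to shuffles that preserve the position of the distinguished chain, giving the usual shuffle relations for $C_\infty$, $C_\infty$-module, and $\infty$-morphism structures.

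Third, having matched the data and relations in one direction, I would give the reverse construction: given $(A, M, \eta_M)$ with the stated structure, package the $C_\infty$-algebra operations on $A$, the $C_\infty$-module operations on $M$, and the components of $\eta_M$ into a family $\{\mu_\cU\}$ indexed by the admissible $\cU$, and observe that all relations required by Proposition~\ref{prop: hodisjexplicit} follow from the corresponding relations in the three structures. The analogous statement for $\infty$-morphisms follows the same template, using Equation~\eqref{eq: inftymorphism} in place of Equation~\eqref{eq diff mu}, and splitting the maps $f_\cU$ into the three families to obtain an $\infty$-morphism of $C_\infty$-algebras $A \leadsto A'$, an $\infty$-morphism of modules $M \leadsto M'$, and a homotopy expressing compatibility of the pointings.

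The main obstacle will be the careful sign bookkeeping: verifying that $\delta_\sigma$ and $\epsilon(s'_i, s''_i, j, p, q)$, specialized to each of the three families, reproduce on the nose the signs in the standard definitions of $C_\infty$-algebra, $C_\infty$-module, and $\infty$-module morphism. In particular, the term $\sum_i (s''_i - 1) \sum_{\ell > i}(s'_\ell - 1)$ in $\epsilon$ becomes trivial for the first two families (all $s_i = 1$) but contributes nontrivially for the third, and one must confirm the sign conventions align with those used in the definition of pointed modules above.
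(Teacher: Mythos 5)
Your proposal is correct and follows essentially the same route as the paper: enumerate the three admissible families of chains (all $(\emptyset)$; one $(X)$; one $(\emptyset \subsetneq X)$), match them respectively to the $C_\infty$-structure on $A$, the $C_\infty$-module structure on $M$, and the pointing, and then verify the differential. The paper in fact leaves the verification of Equation~\eqref{eq diff mu} as ``a simple exercise,'' so your sketch of how the two sums split according to the position of the distinguished chain is, if anything, more detailed than the published argument.
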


\begin{proof}
  Let $\cA$ be a $\hoDisj$-algebra on $X$ and let $A = \cA(\emptyset)$, $M = \cA(X)$.
  By Remark~\ref{rem a empty}, the $\hoDisj$-structure on $\cA$ endows $A$ with a $C_\infty$-algebra structure, using the generators $\mu^{\antishriek}_{\emptyset, \dots, \emptyset}$.
  The remaining generators are of two kinds:
  \begin{enumerate}
    \item The generators $m_{k,i} \coloneqq \mu^{\antishriek}_{\cU[k, i]}$, where:
          \begin{equation*}
            \cU[k, i] = \underbrace{\bigl( \emptyset, \dots, \emptyset, \overbrace{X}^{i\text{th position}}, \emptyset, \dots, \emptyset \bigr)}_{k \text{ terms}}.
          \end{equation*}
          These correspond to maps $m_{k,i} : A \otimes \dots \otimes A \otimes M \otimes A \otimes \dots \otimes A \to M$ which endow $M$ with a $C_\infty$-module structure.
    \item The generators $f_{k,i} \coloneqq \mu^{\antishriek}_{\cU'[k, i]}$, where:
          \begin{equation*}
            \cU'[k, i] = \underbrace{\bigl( \emptyset, \dots, \emptyset, \overbrace{\emptyset \subset X}^{i\text{th position}}, \emptyset, \dots, \emptyset \bigr)}_{k \text{ terms}}.
          \end{equation*}
  \end{enumerate}
  These correspond exactly the the $C_\infty$-module structure maps and the $\infty$-$C_\infty$-module morphism maps.
  It is a simple exercise to check that the differential from Proposition~\ref{prop: hodisjexplicit} are identical to the ones from the definition of $C_\infty$-modules.
\end{proof}

The proof of the next lemma is just as straightforward, if tedious.

\begin{lemma}\label{lem sierp}
  Let $S$ be the Sierpiński space, with points $\{1,2\}$ and opens $\{\emptyset, \{1\}, \{1, 2\}\}$.
  A $\hoDisj$-algebra on $S$ is a septuple $\cA = (A, M, \eta_M, N, \eta_N, f, h)$ where:
  \begin{itemize}[nosep]
    \item $A = \cA(\emptyset)$ is a $C_\infty$-algebra;
    \item $M = \cA(\{1\})$ is a $C_\infty$-$A$-module;
    \item $\eta_M = \cA(\emptyset \subset \{1\})$ is a pointing $(\id_A, \eta_M) : (A,A) \leadsto (A,M)$;
    \item $N = \cA(\{1,2\})$ is a $C_\infty$-$A$-module;
    \item $\eta_N = \cA(\emptyset \subset \{1,2\})$ is a pointing $(\id_A, \eta_N) : (A,A) \leadsto (A,N)$;
    \item $f = \cA(\{1\} \subset \{1,2\})$ is an $\infty$-$C_\infty$-module morphism $(A,M) \leadsto (A,N)$;
    \item $h = \cA(\emptyset \subset \{1\} \subset \{1,2\})$ is an $\infty$-module $\infty$-homotopy $f \circ \eta_M \simeq \eta_N : A \leadsto N$.
  \end{itemize}
\end{lemma}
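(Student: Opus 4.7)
The plan is to proceed exactly as in the proofs of Lemma~\ref{lem hodisj empty} and the preceding singleton lemma: enumerate the chains of opens indexing the generators of $\hoDisj_S$, match each resulting class of generators to a piece of the septuple, and verify that the differential formula of Proposition~\ref{prop: hodisjexplicit} specializes to the expected relations.

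First, I would observe that the poset $\PCosh_S$ is linearly ordered by $\emptyset \subset \{1\} \subset \{1,2\}$, so there are exactly seven nonempty chains in $\PCosh_S$, of lengths one, two, and three. Because any two nonempty opens of $S$ meet, the disjointness requirement on the terminal opens $U_{1s_1}, \ldots, U_{ks_k}$ forces at most one of the factors $\cU_i$ of a generator $\mu^\antishriek_\cU$ to be a nontrivial chain (i.e., not the trivial chain $(\emptyset)$). Every generator is therefore specified by a single nontrivial chain type together with its position $i$ among otherwise trivial factors, and the generators fall into seven classes in accordance with the seven chain types.

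Second, I would match each class to a piece of the septuple. The class indexed by $(\emptyset)$ recovers the $C_\infty$-structure on $A$ exactly as in Lemma~\ref{lem hodisj empty}. The classes indexed by $(\{1\})$ and $(\{1,2\})$ give, by the same argument as in the singleton case, the $C_\infty$-$A$-module structures on $M$ and $N$. The classes indexed by $(\emptyset \subset \{1\})$ and $(\emptyset \subset \{1,2\})$ give the components of the $\infty$-module morphism pointings $\eta_M$ and $\eta_N$. The class indexed by $(\{1\} \subset \{1,2\})$ gives the $\infty$-$C_\infty$-module morphism $f : (A,M) \leadsto (A,N)$, and the class indexed by the three-step chain $(\emptyset \subset \{1\} \subset \{1,2\})$ gives the components of an $\infty$-module $\infty$-homotopy $h$ between $f \circ \eta_M$ and $\eta_N$.

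Third, I would verify the differentials. The internal part of the differential in Equation~\eqref{eq diff mu} (the first sum, coming from $d\iota^\antishriek_{\cU_i}$) applied to the three-step chain produces a single new term, obtained by collapsing $(\emptyset \subset \{1\} \subset \{1,2\})$ to $(\emptyset \subset \{1,2\})$; this is the $\eta_N$ summand of the homotopy relation for $h$. The cooperadic part (given by Lemma~\ref{lem: infdecomp}) produces the $f \circ \eta_M$ summand together with the compatibility of $h$ with the $C_\infty$-algebra and $C_\infty$-module operations on $A$, $M$, and $N$. An analogous but simpler calculation handles the other six chain types. The main obstacle is purely combinatorial bookkeeping: one must confirm that the signs and shuffle conventions produced by Equation~\eqref{eq diff mu} match those used to define $C_\infty$-modules, $\infty$-$C_\infty$-module morphisms, and $\infty$-homotopies in the preceding subsection. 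Since every chain in $\PCosh_S$ has length at most three, each decomposition product under consideration involves only a handful of terms, and the verification, while tedious, is finite and direct.
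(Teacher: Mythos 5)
Your proposal is correct and follows exactly the route the paper intends: the paper omits the proof with the remark that it is ``just as straightforward, if tedious'' as the singleton case, and your argument is precisely that elaboration---the linear order on $\PCosh_S$ forces at most one nontrivial chain factor per generator, the seven chain types index the seven pieces of the septuple, and the two parts of the differential in Equation~\eqref{eq diff mu} recover the module, morphism, and homotopy relations (consistent with the example $(\partial h)(a) = \eta_N(a) - f(\eta_M(a))$ given after the lemma).
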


\begin{example}
  Given a $\hoDisj_S$-algebra $(A, M, \eta_M, N, \eta_N, f, h)$, the homotopy $h$ induces the next relation from which $f \circ \eta_M$ and $\eta_N$ are equal in cohomology.
  This relation is illustrated by the next picture, where solid black edges are colored by $A$, red dashed edges by $M$, and blue double-dashed edges by $N$.
  \begin{align*}
    \begin{forest}
      smalloperad
      [[,mod2,baseline[,mod[1]]]]
    \end{forest}
     &
    \xmapsto{\partial}
    \begin{forest}
      smalloperad
      [[,mod2,baseline[1]]]
    \end{forest}
    -
    \Biggl(
    \begin{forest}
        smalloperad
        [[,mod2,baseline[1,mod]]]
      \end{forest}
    \circ
    \begin{forest}
        smalloperad
        [[,mod,baseline[1]]]
      \end{forest}
    \Biggr);
     &
    \text{or algebraically: }
    (\partial h)(a)
     & =
    \eta_N(a) - f(\eta_M(a)),
  \end{align*}
  But there are of course other relations, such as:
  \begin{equation*}
    \begin{forest}
      smalloperad
      [[,mod2,baseline[,mod2[,mod[1]]][2]]]
    \end{forest}
    \xmapsto{\partial}
    \begin{forest}
      smalloperad
      [[,mod2,baseline[,mod2[1]][2]]]
    \end{forest}
    \pm
    \Biggl(
    \begin{forest}
        smalloperad
        [[,mod2,baseline[,mod2[1,mod]][2]]]
      \end{forest}
    \circ_1
    \begin{forest}
        smalloperad
        [[,mod,baseline[1]]]
      \end{forest}
    \Biggr)
    \pm
    \Biggl(
    \begin{forest}
        smalloperad
        [[,mod2,baseline[1,mod2][2]]]
      \end{forest}
    \circ_1
    \begin{forest}
        smalloperad
        [[,mod2,baseline[,mod[1]]]]
      \end{forest}
    \Biggr)
    \pm
    \Biggl(
    \begin{forest}
        smalloperad
        [[,mod2,baseline[1,mod]]]
      \end{forest}
    \circ
    \begin{forest}
        smalloperad
        [[,mod,baseline[,mod[1]][2]]]
      \end{forest}
    \Biggr)
    \pm
    \Biggl(
    \begin{forest}
        smalloperad
        [[,mod2,baseline[,mod[1]]]]
      \end{forest}
    \circ
    \begin{forest}
        smalloperad
        [[,baseline[1][2]]]
      \end{forest}
    \Biggr)
    .
  \end{equation*}
  Algebraically, this becomes (where we again avoid indices):
  \begin{equation*}
    (\partial h)(a, b) = \eta_N(a, b) \pm f(\eta_M(a), b) \pm \mu_N(h(a), b) \pm f(\eta_N(a, b)) \pm h(\mu_A(a, b)).
    \qedhere
  \end{equation*}
\end{example}

\subsection{The prefactorization algebra on \texorpdfstring{$\RR$}{R} associated to a dg Lie algebra}

Let $\fg$ be a Lie algebra.
\Textcite{CG1} discussed a factorization algebra $\widetilde \cF_{\fg}$ on $\RR$ whose cohomology factorization algebra is the factorization algebra $\cF_{\fg}$ associated to the universal enveloping algebra $U(\fg)$.
Explicitly,
\begin{equation*}
  \tilde \cF_{\fg}(U) = C_\bullet(\Omega_c(U)\otimes \fg),
\end{equation*}
where the Chevalley-Eilenberg chains $C_\bullet(\_)$ are computed using the bornological tensor product (see \autocite[Section~3.4]{CG1} for details).
In the aforementioned reference, it is shown that
\begin{equation*}
  H^\bullet(\widetilde \cF_\fg) \cong \cF_{\fg},
\end{equation*}
and, in particular, the cohomology is concentrated in cohomological degree 0.
Since $\tilde \cF_{\fg}$ is concentrated in non-positive cohomological degree, there is a natural quasi-isomorphism $\tilde \cF_\fg\to H^\bullet(\widetilde \cF_\fg)$.
A minor modification of these arguments applies also in the case that $\fg$ is a graded Lie algebra.
Our techniques make it straightforward to describe a map the other way, and in fact we can show that the theory applies equally well for dg Lie algebras:

\begin{lemma}
  Let $(\fg, d_\fg, [\_, \_])$ be a dg Lie algebra.
  There is an infinity-quasi-isomorphism of $\hoDisj$-algebras:
  \begin{equation*}
    \cF_\fg \rightsquigarrow \tilde \cF_{\fg}.
  \end{equation*}
\end{lemma}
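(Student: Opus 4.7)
The plan is to apply the homotopy transfer theorem (Proposition~\ref{prop HTT}) to $\tilde \cF_\fg$. By the computation in \autocite{CG1}, one has $H^\bullet(\tilde \cF_\fg(U)) \cong \cF_\fg(U) \cong U(\fg)^{\otimes \pi_0(U)}$, concentrated in degree~$0$. For each open $U \subset \RR$, I would construct a deformation retraction
\[
    \ourDR{\cF_\fg(U)}{\tilde \cF_\fg(U)}{i_U}{p_U}{h_U}
\]
by first choosing a retraction of $\Omega_c(U)$ onto its compactly supported de~Rham cohomology $H^\bullet_c(U) = \RR^{\pi_0(U)}$ (using smooth bump functions supported in each connected component of $U$) and then extending this to $C_\bullet(\Omega_c(U) \otimes \fg)$ via the tensor trick, with the homological perturbation lemma absorbing the internal Chevalley--Eilenberg differential.

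Applying Proposition~\ref{prop HTT} then yields a $\hoDisj$-structure on $\{\cF_\fg(U)\}$ together with an $\infty$-morphism extending the $i_U$. The key observation is that this transferred structure is automatically a \emph{strict} $\Disj$-structure. Indeed, by Proposition~\ref{prop: hodisjexplicit}, the transferred operation $\mu_\cU$ has degree $2 - k - \sum_i (s_i - 1)$; since each $\cF_\fg(U)$ sits in degree~$0$, every transferred operation of strictly negative degree vanishes. The only possibilities with degree~$0$ are $(k, s_1) = (1, 2)$ and $(k, s_1, s_2) = (2, 1, 1)$, corresponding respectively to the generators $\ext$ and $\bin$ of $\Disj$. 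A direct computation, using $p_U \circ i_U = \id$ and the fact that the bilinear structure map on $\tilde \cF_\fg$ is concatenation of symmetric tensors, identifies the transferred $\bin$ with the canonical concatenation $U(\fg)^{\otimes \pi_0(U)} \otimes U(\fg)^{\otimes \pi_0(V)} \to U(\fg)^{\otimes \pi_0(U \sqcup V)}$ (via a PBW argument), and the transferred $\ext$ with the ``pad with units of $U(\fg)$'' map. Hence the transferred structure coincides with the prefactorization structure on $\cF_\fg$, and the resulting $\infty$-morphism is a quasi-isomorphism at each color, hence an $\infty$-quasi-isomorphism.

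The main obstacle I anticipate is analytic rather than algebraic. The factorization algebra $\tilde \cF_\fg$ is defined in \autocite{CG1} using bornological tensor products, so the retraction data $(i_U, p_U, h_U)$ must be chosen with enough regularity that both the tensor trick and the perturbation lemma operate within the appropriate category of topological vector spaces; in particular, $p_U$ and $h_U$ should be built from smooth bump functions and explicit smooth homotopy operators so as to preserve bornological continuity. Once this topological care has been taken, the degree-based argument above renders the algebraic conclusion essentially automatic.
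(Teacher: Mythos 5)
Your overall strategy matches the paper's: build a deformation retraction of $\tilde \cF_{\fg}(U)$ onto $\bigotimes_{\pi_0(U)}\Sym\fg \cong \cF_\fg(U)$ by retracting $\Omega^\bullet_c(U)\otimes\fg$ onto its compactly supported cohomology, extend via the tensor trick, absorb the Chevalley--Eilenberg differential with the homological perturbation lemma, apply Proposition~\ref{prop HTT}, and then argue that the transferred structure is the strict $\Disj$-structure coming from $U(\fg)$. The first half of this is exactly what the paper does (it additionally observes that the perturbation converges because $d_{CE}$ lowers sym-degree while $h_U$ preserves it, and that the induced perturbation $\delta$ of the differential on the small complex vanishes, so the transferred underlying complex really is $\cF_\fg$).

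The gap is in your strictness argument. You claim that each $\cF_\fg(U)$ ``sits in degree $0$,'' so that every transferred operation of strictly negative cohomological degree vanishes. That is true only when $\fg$ is concentrated in degree zero; the lemma is stated for a dg Lie algebra, so $U(\fg)^{\otimes\pi_0(U)}$ carries the internal grading of $\fg$ and is in general spread over many cohomological degrees, and an operation of negative cohomological degree between such complexes has no reason to vanish. The paper circumvents this by first noting that the transfer formulas do not depend on $d_\fg$ (so one may assume $d_\fg=0$) and then introducing an auxiliary \emph{syzygy} grading --- sym-degree minus form degree --- with respect to which $i_U$ and $p_U$ have degree $0$, the homotopy has degree $+1$, the structure maps of $\tilde\cF_{\fg}$ have degree $0$, and all of $\cF_\fg(U)$ is concentrated in degree $0$ regardless of the grading on $\fg$. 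A transfer tree then has syzygy degree equal to its number of internal edges, so only the trees with no internal edges survive, and those give precisely the $\Disj$-structure induced on cohomology, i.e., the one associated to $U(\fg)$. Your bookkeeping on the generators (degree $2-k-\sum_i(s_i-1)$, with the only degree-$0$ generators being those corresponding to $\ext$ and $\bin$) is correct, but it must be run against this auxiliary grading rather than the cohomological one; with that substitution the rest of your argument, including the PBW identification of the surviving operations, goes through.
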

\begin{proof}
  Let $U\subseteq \RR$ be open.
  Let $d_{dR}$ denote the de Rham differential on the dg Lie algebra $\Omega^\bullet_c(U)\otimes \fg$.
  The complex $\widetilde \cF_\fg(U)$ has a differential induced from $d_{dR}$, a differential induced from $d_\fg$, and finally the Chevalley-Eilenberg differential $d_{CE}$ (note that we use the term $d_{CE}$ to denote only the part of the Chevalley-Eilenberg differential which is not a derivation for the symmetric algebra structure).
  There is a deformation retraction
  \begin{equation*}
    \ourDR{(\bigoplus_{\pi_0(U)}\fg,d_\fg)}{(\Omega^\bullet_c(U)\otimes \fg[1], d_{dR}+d_\fg)}{i_U}{p_U}{h_U},
  \end{equation*}
  where $p_U$ is, connected-component-by-connected-component, the integration map.
  To define the maps $i_U$ and $h_U$, one makes a choice of compactly-supported one-form on $(-1/2,1/2)$ with integral 1.
  This is a fairly standard deformation retraction satisfying the side conditions $h_U^2=p_U h_U = h_U i_U = 0$; see, e.g.\ \autocite[Section~4.3]{botttu}.
  This deformation retraction extends to a retraction
  \begin{equation*}
    \ourDR{(\bigotimes_{\pi_0(U)}\Sym\fg,d_{\fg})}{(\Sym(\Omega^\bullet_c(U)\otimes \fg[1]),d_{dR}+d_\fg)}{i_U}{p_U}{h_U}
  \end{equation*}
  satisfying the same side conditions.
  The complex $\widetilde \cF_\fg$ is a perturbation of the right-hand side of the above retraction, namely by the differential $d_{CE}$.
  The homological perturbation lemma \autocite{crainic} applies in this situation: the differential $d_{CE}$ lowers sym-degree by 1 and $h_U$ preserves sym-degree, so the infinite sum $\sum_{n=0}^\infty (d_{CE}h_U)^n$ is well-defined on any symmetric tensor of fixed sym-degree.
  Hence, we obtain a deformation retraction
  \begin{equation*}
    \ourDR{(\bigotimes_{\pi_0(U)}\Sym\fg,d_\fg+\delta)}{\widetilde \cF_{\fg}}{i'_U}{p'_U}{h'_U},
  \end{equation*}
  where
  \begin{align*}
    i'_U = \sum_{n=0}^\infty (h_Ud_{CE})^n i_U, \quad p'_U & = \sum_{n=0}^\infty p_U(d_{CE}h_U)^n, \quad h'_U =  \sum_{n=0}^\infty h_U(d_{CE}h_U)^n \\
    \delta                                                 & = p_U \sum_{n=0}^\infty (\eta_U d_{CE})^n i_U.
  \end{align*}
  The differential $d_{CE}$ is trivial on the image of $i_U$; hence, $\delta=0$.
  For the same reason, we get $i'_U = i_U$.
  The homotopy transfer theorem (Theorem~\ref{prop HTT}) now guarantees the existence of a $\hoDisj$-algebra structure on the collection of graded vector spaces $\bigotimes_{\pi_0(U)}\Sym \fg$ (as $U$ ranges over the open subsets of $\RR$), and an infinity-morphism from this $\hoDisj$-algebra to $\widetilde{\cF}_\fg$.
  Note that, by the PBW theorem, $\bigotimes_{\pi_0(U)}\Sym(\fg)\cong\bigotimes_{\pi_0(U)}U(\fg)=\cF_\fg(U)$.

  Our goal now is to show that this transfered $\hoDisj$-algebra structure coincides with the $\Disj$-algebra structure on $\cF_\fg$ constructed from the associative product on~$U(\fg)$.
  Since the formulas for the transfered $\hoDisj$-algebra structure on $\widetilde{\cF}_\fg$ do not depend in any way on $d_\fg$, it suffices to consider the case that $d_\fg=0$.
  Let us proceed to show that there are no operations of negative degree in the  transferred $\hoDisj$-algebra structure on $\cF_\fg(U)$.

  To this end, it is useful to introduce a grading on $\widetilde \cF_{\fg}$ which we call the syzygy-degree, by analogy with the case of the bar and cobar constructions of a quadratic algebra/coalgebra.
  The syzygy degree in $\widetilde{\cF}_\fg(U)$ is the total sym-degree minus the form degree.
  The syzygy degree for all elements of $\cF_\fg(U)$ is zero.
  With these choices, the maps $i_U$, $p_U$, $h_U$, and $d_{CE}$ have syzygy degrees 0, 0, $+1$, and $-1$, respectively.
  It follows that $i'_U$, $p'_U$, and $h'_U$ have the same degrees as their un-primed counterparts.
  Moreover, the $\Disj$-operations on $\widetilde{\cF}_\fg$ have syzygy degree 0.

  The operations of the $\hoDisj$-algebra structure on $\cF_\fg$ induced from homotopy transfer can be represented by a sum of trees with vertices of valence two or three.
  The leaves of each tree are labeled by $i'_U$ (possibly for varying values of $U$), the internal edges are labeled by $h'_U$, and the root is labeled by $p'_U$.
  The vertices are labeled by operations from $\widetilde{\cF}_\fg$.
  The total syzygy degree of such a tree is just the number of internal edges; since $\cF_\fg$ is concentrated in syzygy degree 0, it follows that no trees with internal edges contribute to the $\hoDisj$-algebra structure on $\cF_\fg$.
  But the trees with no internal edges are precisely the ones that induce the $\Disj$-algebra structure which $\cF_\fg$ obtains as the cohomology of $\widetilde \cF_\fg$.
  This $\Disj$-algebra structure is precisely the one associated to the universal enveloping algebra $U(\fg)$~\autocite{CG1}.
\end{proof}

\printbibliography


\end{document}